\numberwithin{equation}{section} 
\numberwithin{figure}{section} 
\theoremstyle{plain}
\newtheorem{thm}{Theorem}[section]
  \theoremstyle{definition}
  \newtheorem{defn}[thm]{Definition}
\newtheorem*{ackn}{Acknowledgement}
  \theoremstyle{plain}
  \newtheorem{lem}[thm]{Lemma}
  \theoremstyle{plain}
  \newtheorem{prop}[thm]{Proposition}
  \newtheorem{cor}[thm]{Corollary}
  \theoremstyle{remark}
  \newtheorem*{rem*}{Remark}
 \theoremstyle{definition}
  \theoremstyle{remark}
  \newtheorem{rem}[thm]{Remark}
\def\ft#1{{\mathsf #1}}
\def\chow{{\mathscr X}}
\def\hcoY{{\mathscr Y}}
\def\Hes{{\mathscr H}}
\def\UU{{\mathscr U}}
\def\Zpq{{\mathscr Z}}
\def\hchow{{\mathaccent20\chow}}
\def\Prt{{\mathscr P}}
\def\frQ{{\mathfrak Q}}
\def\Lrho{\text{\large{\mbox{$\rho\hskip-2pt$}}}}
\def\tLrho{\text{\large{\mbox{$\tilde\rho\hskip-2pt$}}}}
\def\Lpi{\text{\large{\mbox{$\pi\hskip-2pt$}}}}
\def\Lp{\text{\large{\mbox{$p\hskip-1pt$}}}}
\font\eu=eusm10 at 10pt
\def\eF{\text{\eu F}}
\def\eG{\text{\eu G}}
\newcommand{\vcorr}[3][1]{%
  \begingroup
    \tabcolsep=.5\tabcolsep
    \sbox0{%
      \begin{tabular}[b]{@{}l}%
        #3%
         \tabularnewline
      \end{tabular}%
    }%
    \settoheight{\dimen0 }{%
      \rotatebox{#2}{%
        \copy0 %
        \kern-\tabcolsep
      }%
    }%
    \rule{0pt}{#1\dimen0}%
    \setlength{\wd0 }{1em}%
    \setlength{\ht0 }{1em}%
    \rotatebox{#2}{\usebox{0}}%
  \endgroup
}
\newenvironment{myitem}{\begin{list}{}{
\setlength{\leftmargin}{25pt}
\setlength{\labelsep}{5pt}
}}{\end{list}}
\newenvironment{myitem2}{\begin{list}{}{
\setlength{\leftmargin}{0.5cm}
\setlength{\itemindent}{-0.3cm}
\setlength{\itemsep}{0cm}
}}{\end{list}}
\begin{document}
\global\long\def\sA{\mathcal{A}}
 \global\long\def\sB{\mathcal{B}}
 \global\long\def\sC{\mathcal{C}}
 \global\long\def\sD{\mathcal{D}}
 \global\long\def\sE{\mathcal{E}}
 \global\long\def\sF{\eF}
 \global\long\def\sG{\eG}
 \global\long\def\sH{\mathcal{H}}
 \global\long\def\sI{\mathcal{I}}
 \global\long\def\sJ{\mathcal{J}}
 \global\long\def\sK{\mathcal{K}}
 \global\long\def\sL{\mathcal{L}}
 \global\long\def\sN{\mathcal{N}}
 \global\long\def\sM{\mathcal{M}}
 \global\long\def\sO{\mathcal{O}}
 \global\long\def\sP{\mathcal{P}}
 \global\long\def\sS{\mathcal{S}}
 \global\long\def\sR{\mathcal{R}}
 \global\long\def\sQ{\mathcal{Q}}
 \global\long\def\sT{\mathcal{T}}
 \global\long\def\sU{\mathcal{U}}
 \global\long\def\sV{\mathcal{V}}
 \global\long\def\sW{\mathcal{W}}
 \global\long\def\sX{\mathcal{X}}
 \global\long\def\sY{\mathcal{Y}}
 \global\long\def\sZ{\mathcal{Z}}
 \global\long\def\tA{{\widetilde{A}}}
 \global\long\def\mA{\mathbb{A}}
 \global\long\def\mC{\mathbb{C}}
 \global\long\def\mF{\mathbb{F}}
 \global\long\def\mG{\mathbb{G}}
 \global\long\def\G{{\bf G}}
 \global\long\def\mN{\mathbb{N}}
 \global\long\def\mP{\mathbb{P}}
 \global\long\def\mQ{\mathbb{Q}}
 \global\long\def\mZ{\mathbb{Z}}
 \global\long\def\mW{\mathbb{W}}
 \global\long\def\Ima{\mathrm{Im}\,}
 \global\long\def\Ker{\mathrm{Ker}\,}
 \global\long\def\Alb{\mathrm{Alb}\,}
 \global\long\def\ap{\mathrm{ap}}
 \global\long\def\Bs{\mathrm{Bs}\,}
 \global\long\def\Chow{\mathrm{Chow}}
 \global\long\def\CP{\mathrm{CP}}
 \global\long\def\Div{\mathrm{Div}\,}
 \global\long\def\divi{\mathrm{div}\,}
 \global\long\def\expdim{\mathrm{expdim}\,}
 \global\long\def\ord{\mathrm{ord}\,}
 \global\long\def\Aut{\mathrm{Aut}\,}
 \global\long\def\Hilb{\mathrm{Hilb}}
 \global\long\def\Hom{\mathrm{Hom}}
 \global\long\def\id{\mathrm{id}}
 \global\long\def\Ext{\mathrm{Ext}}
 \global\long\def\sHom{\mathcal{H}{\!}om\,}
 \global\long\def\Lie{\mathrm{Lie}\,}
 \global\long\def\mult{\mathrm{mult}}
 \global\long\def\opp{\mathrm{opp}}
 \global\long\def\Pic{\mathrm{Pic}\,}
 \global\long\def\Pf{{\bf Pf}}
 \global\long\def\Sec{\mathrm{Sec}}
 \global\long\def\Spec{\mathrm{Spec}\,}
 \global\long\def\Sym{\mathrm{Sym}}
 \global\long\def\sQpec{\mathcal{S}{\!}pec\,}
 \global\long\def\Proj{\mathrm{Proj}\,}
 \global\long\def\Rhom{{\mathbb{R}\mathcal{H}{\!}om}\,}
 \global\long\def\aw{\mathrm{aw}}
 \global\long\def\exc{\mathrm{exc}\,}
 \global\long\def\emb{\mathrm{emb\text{-}dim}}
 \global\long\def\codim{\mathrm{codim}\,}
 \global\long\def\OG{\mathrm{OG}}
 \global\long\def\pr{\mathrm{pr}}
 \global\long\def\Sing{\mathrm{Sing}\,}
 \global\long\def\Supp{\mathrm{Supp}\,}
 \global\long\def\SL{\mathrm{SL}\,}
 \global\long\def\Reg{\mathrm{Reg}\,}
 \global\long\def\rank{\mathrm{rank}\,}
 \global\long\def\VSP{\mathrm{VSP}\,}
 \global\long\def\B{B}
 \global\long\def\Q{Q}
 \global\long\def\rG{\mathrm{G}}
\global\long\def\rF{\mathrm{F}}
 \global\long\def\nS{\textsc{S}}
\global\long\def\nT{\textsc{T}}
\global\long\def\nU{\textsc{U}}
 \global\long\def\frP{\mathfrak{P}}
\global\long\def\frR{\mathfrak{R}}


\title{Towards Homological Projective duality 
for\\ ${\ft S}^2 \mP^3$ and ${\ft S}^2 \mP^4$}
\author{Shinobu Hosono and Hiromichi Takagi}
\begin{abstract}
We provide homological foundations to establish conjectural
homological projective dualities between
{1)} ${\ft S}^2 \mP^3$ and the double cover of the projective $9$-space
branched along the symmetric determinantal quartic,
and {2)} ${\ft S}^2 \mP^4$
and the double cover of the symmetric determinantal quintic in $\mP^{14}$
branched along the symmetric determinantal locus of rank at most $3$. 
\end{abstract}
\maketitle
\vspace{0.5cm}
\markboth{Hosono and Takagi}
{HPD for ${\ft S}^2 \mP^3$ and ${\ft S}^2 \mP^4$}
\section{Introduction}

Throughout this paper,
we work over $\mC$, the complex number field.
We fix a vector space $V$ of dimension $n+1$
and denote by $V^*$ 
the dual vector space to $V$.

Let us consider the projective space $\mP({\ft S}^2 V^*)$ which 
we identify with the space of quadrics in  $\mP(V)$. In a separate 
paper \cite{Geom}, we have considered the locus 
$\nS_r\subset \mP({\ft S}^2 V^*)$ 
which represents quadrics in $\mP(V)$ of rank 
at most $r$. $\nS_r$ is a determinantal variety which is 
defined by $(r+1)\times (r+1)$ minors of the generic 
$(n+1)\times (n+1)$ symmetric matrix, and we have called it  
{\it symmetric determinantal locus of rank at most $r$}. 
As studied in [ibid.], when $r$ is even, $\nS_r$ has a double cover 
$\nT_r$ branched along $\nS_{r-1}$. $\nT_r$ is called {\it 
double symmetric determinantal locus of rank at most $r$}.  
These definitions apply in the same way using the dual projective spaces, 
i.e., $\nS_r^*$ and its double cover $\nT_r^*$ (when $r$ is 
even) are defined using the dual projective space $\mP({\ft S}^2 V)$.

In [ibid.], we have studied algebro-geometric 
properties of $\nS_r$ and $\nT_r$ in detail motivated by the so-called 
{\it homological projective duality} (HPD) due to Kuznetsov \cite{HPD1}.
HPD is a powerful framework to describe the derived category of a 
projective variety with its dual variety. Several interesting examples 
such as Pfaffian varieties (i.e., determinantal loci of 
anti-symmetric matrices) \cite{HPD2} and the second Veronese  
variety $\nS^*_1$  \cite{Quad} as well as the linear duality in general 
\cite[\S 8]{HPD1} have been studied.

The purpose of this paper is to lay homological foundations to
establish the HPDs for $\nS^*_2$ with $n=3,4$.
Indeed this paper is an extended version of the second part of \cite{Arxiv}
(the paper \cite{Geom} contains generalizations of the first part of [ibid.]). 
{It is useful to 
note that $\nS^*_2$ may be identified with 
${\ft S}^2 \mP(V)$ in a similar way to the relation of $\nS^*_1$ and 
the second Veronese variety $v_2(\mP(V))$.}
These conjectual HPDs are special cases
of two different types of plausible HPDs, which 
have naturally arisen from our 
algebro-geometric study on $\nS_r$ and $\nT_r$ in \cite[\S 3.5, 3.6]{Geom}.

The first one is the duality 
between $\nS^*_{n+2-r}$ and $\nT_{r}$ for each even $r$.
We suspect that their suitable non-commutative resolutions are 
HPD to each other with respect to certain (dual)
Lefschetz collections
because there exist orthogonal linear sections of
$\nS^*_{n+2-r}$ and $\nT_{r}$ 
such that they are Calabi-Yau varieties of the same dimension
(see \cite[Prop.~3.2 and 3.6]{Geom}).
In this paper, we consider the case where $n=r=4$, namely, we study
$\nS^*_2$ and $\nT_4$ for $n=4$.
In this case, 
there exist orthogonal linear sections 
{$X$ of $\nS^*_2$ and $Y$ 
of $\nT_4$}
such that they are {\it smooth} Calabi-Yau threefolds.
$X$ is so called a {\it Calabi-Yau threefold of Reye congruence},
and we call $Y$ a {\it double quintic symmetroid}.
In \cite{Geom}, we constructed certain resolutions 
$\widetilde{\nS}^*_2$ and $\widetilde{\nT}_4$
of $\nS^*_2$ and $\nT_4$, respectively.
In this paper, we construct (dual) Lefschetz collections
in the derived categories of the resolutions
$\widetilde{\nS}^*_2$ and $\widetilde{\nT}_4$ 
(Corollaries \ref{thm:Gvan1n=4} and \ref{thm:Gvan}). 
We remark that the (dual) Lefschetz collections
{have been} originally read off from a locally free resolution of
certain ideal sheaf on $\widetilde{\nT}_4\times \widetilde{\nS}^*_2$
(see \cite{HoTa3}).
Based on these (dual) Lefschetz collections,
we {have shown} in \cite{HoTa3} that $X$ and $Y$ are derived equivalent
(see also \cite{HoTa1, HoTa2}).
Moreover, we show that the dual Lefschetz collection in $\sD^b(\widetilde{\nS}^*_2)$ gives a dual Lefschetz decomposition of a categorical resolution of  
$\sD^b({\nS}^*_2)$
\footnote{Similar results have been obtained in \cite{Ren} using the 
category of matrix factorizations and the variation of GIT 
method \cite{BDFIK,DS,Halp}.}
 defined by Kuznetsov (Theorem \ref{thm:dualLef}).
These should be strong evidences for HPD
between $\nS^*_2$ and $\nT_4$.

The second plausible duality is between  
$\nS^*_{n+1-\frac r2}$ and $\nT_r$ for each even $r$. This duality may be 
observed in the resolution
$\widetilde{\nS}^*_{n+1-\frac r2}$ of $\nS^*_{n+1-\frac r2}$ 
and the fiber space $\nU_r$ of $\nT_r$ which have been constructed by 
using certain projective bundles over the Grassmannian 
$\rG(n+1-\frac{r}{2},V)$ \cite{Geom}.
It turns out that $\widetilde{\nS}^*_{n+1-\frac r2}$ and $\nU_r$ 
are given as certain incident varieties  
in $\mP({\ft S}^2 V)\times \rG(n+1-\frac r2,V)$ and 
$\mP({\ft S}^2 V^*)\times \rG(n+1-\frac r2,V)$, respectively, and 
are orthogonal to each other with respect to the dual pairing between $\ft{S}^2V$ and 
$\ft{S}^2V^*$. {We will see that} 
$\widetilde{\nS}^*_{n+1-\frac r2}$ and $\nU_r$ 
are precisely in the setting of 
the linear duality established by Kuzunetsov \cite[\S 8]{HPD1} and hence 
HPD to each other. This duality between 
$\widetilde{\nS}^*_{n+1-\frac r2}$ and 
$\nU_r$ indicates certain relationship between the derived categories of 
${\nS}^*_{n+1-\frac r2}$ and $\nT_r$. In this paper, to provide {a} supporting 
evidence, we consider the case of $n=3$ and $r=4$, i.e., we study $\nS^*_2$ and $\nT_4$ for $n=3$. In this case, we have 
an {\it Enriques surface of Reye congruence} 
and an {\it Artin-Mumford double solid} as 
an orthogonal linear sections of
$\nS^*_2$ and $\nT_4$, respectively.
We construct (dual) Lefschetz collections
in the derived categories of the resolutions $\widetilde{\nS}_2^*$ and
$\widetilde{\nT}_4$ 
(Corollaries \ref{thm:Gvan1} and \ref{thm:Gvann=3}). 
Based on these (dual) Lefschetz collections,
we {will} show in \cite{ReyeEnr} that 
there exists a close relationship between
the derived categories of the two linear sections.
Moreover, we  show that the dual Lefschetz collection 
in $\sD^b(\widetilde{\nS}^*_2)$ gives a dual Lefschetz 
decomposition of a categorical resolution of  
$\sD^b({\nS}^*_2)$ {$^{1)}$} defined by Kuznetsov (Theorem \ref{thm:dualLef}).
These should be strong evidences for HPD between $\nS^*_2$ and $\nT_4$.

\begin{ackn}
This paper is supported in part by Grant-in
Aid Scientific Research (S 24224001, B 23340010 S.H.) and Grant-in
Aid for Young Scientists (B 20740005, H.T.).  They thank Nicolas
Addington and Sergey Galkin for useful communications. {They also thank 
J\/orgen Vold Rennemo for letting us know about his Ph.D. thesis.}
\end{ackn}

\section{\bf{Basic results}}
\subsection{Borel-Weil-Bott Theorem}
We frequently use the following Borel-Weil-Bott Theorem.

For a locally free sheaf $\mathcal{E}$ of rank $r$ on a variety
and a nonincreasing sequence $\beta=(\beta_{1},\beta_{2},\dots,\beta_{r})$
of integers, we denote by $\ft{\Sigma}^{\beta}\mathcal{E}$ the associated
locally free sheaf with the Schur functor $\ft{\Sigma}^{\beta}$.

\begin{thm}\label{thm:Bott} Let $\pi\colon\mathrm{G}(r,\sA)\to X$
be a Grassmann bundle for a locally free sheaf $\sA$ on a variety
$X$ of rank $n$ and $0\to\sS\to\sA\to\sQ\to0$ the universal exact
sequence. For $\beta:=(\alpha_{1},\dots,\alpha_{r})\in\mZ^{r}$ $(\alpha_{1}\geq\dots\geq\alpha_{r})$
and $\gamma:=(\alpha_{r+1},\dots,\alpha_{n})\in\mZ^{n-r}$ $(\alpha_{r+1}\geq\dots\geq\alpha_{n})$,
we set $\alpha:=(\beta,\gamma)$ and $\sV(\alpha):=\ft{\Sigma}^{\beta}\sS^{*}\otimes\ft{\Sigma}^{\gamma}\sQ^{*}$.
Define $\rho:=(n,\dots,1)$ and for an element $\sigma$ of the $n$-th
symmetric group $\mathfrak{S}_{n}$, we set $\sigma\cdot(\alpha):=\sigma(\alpha+\rho)-\rho$.
Then the followings hold\,$:$

\begin{myitem2}

\item[\rm{(1)}] If $\sigma(\alpha+\rho)$ contains two equal integers,
then $R^{i}\pi_{*}\sV(\alpha)=0$ for any $i\geq0$. 

\item[\rm{(2)}]If there exists an element $\sigma\in\mathfrak{S}_{n}$
such that $\sigma(\alpha+\rho)$ is strictly decreasing, then $R^{i}\pi_{*}\sV(\alpha)=0$
for any $i\geq0$ except $R^{l(\sigma)}\pi_{*}\sV(\alpha)=\ft{\Sigma}^{\sigma\cdot(\alpha)}\sA^{*}$,
where $l(\sigma)$ represents the length of $\sigma\in\mathfrak{S}_{n}$.

\end{myitem2}

\end{thm}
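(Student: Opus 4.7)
The plan is to reduce the relative statement to the classical Borel--Weil--Bott theorem over a point, and then to globalize using equivariance and flat base change. Since the formulation is purely functorial in the pair $(X,\sA)$, the argument is really about transporting the absolute theorem along the natural $\mathrm{GL}$-equivariant structure of the Grassmann bundle.

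First I would work \'etale-locally on $X$: choose an open $U\subset X$ over which $\sA|_{U}\cong\sO_{U}^{\oplus n}$ trivializes. Over such $U$ the Grassmann bundle becomes a product $U\times\rG(r,n)$ with universal sequence pulled back from the second factor, and the sheaf $\sV(\alpha)=\ft{\Sigma}^{\beta}\sS^{*}\otimes\ft{\Sigma}^{\gamma}\sQ^{*}$ is identified with the pullback of the corresponding irreducible homogeneous vector bundle $\sV_{\alpha}$ on $\rG(r,n)=\mathrm{GL}_{n}/P$ determined by the weight $\alpha=(\beta,\gamma)$ relative to the maximal parabolic $P$. By flat base change,
\[
R^{i}\pi_{*}\sV(\alpha)|_{U}\;\cong\;H^{i}\bigl(\rG(r,n),\,\sV_{\alpha}\bigr)\otimes_{\mC}\sO_{U}.
\]

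Next I would invoke the classical Borel--Weil--Bott theorem on $\mathrm{GL}_{n}/P$, which says exactly: if the shifted weight $\alpha+\rho$ has two equal coordinates then all $H^{i}$ vanish (giving (1)); and otherwise there is a unique $\sigma\in\mathfrak{S}_{n}$ making $\sigma(\alpha+\rho)$ strictly decreasing, in which case only $H^{l(\sigma)}$ is nonzero and equals the irreducible $\mathrm{GL}_{n}$-representation $\ft{\Sigma}^{\sigma\cdot\alpha}(\mC^{n})^{*}$. This immediately gives the local form of both (1) and (2), and in the (2) case identifies the local cohomology sheaf with $\ft{\Sigma}^{\sigma\cdot\alpha}\sA^{*}|_{U}$.

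Finally I would globalize: because the identifications above are induced by the $\mathrm{GL}_{n}$-equivariance of Borel--Weil--Bott, they are canonical and compatible with change of trivialization of $\sA$, so the locally defined isomorphisms $R^{l(\sigma)}\pi_{*}\sV(\alpha)|_{U}\cong\ft{\Sigma}^{\sigma\cdot\alpha}\sA^{*}|_{U}$ glue to a canonical global isomorphism on $X$. The main point to verify carefully — and the only step that is more than bookkeeping — is this equivariant gluing: one must check that the cohomology of $\sV_{\alpha}$ is formed by an endofunctor of $\mathrm{GL}(\sA)$-representations compatible with Schur functors, so that the transition cocycle of $\sA$ transports correctly to that of $\ft{\Sigma}^{\sigma\cdot\alpha}\sA^{*}$. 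Once this is in place (or, more cheaply, once one invokes Weyman's formulation for the universal Grassmannian over $B\mathrm{GL}_{n}$ and pulls back along the classifying map of $\sA$), both assertions (1) and (2) follow. A direct reference such as Weyman's book on cohomology of vector bundles and syzygies would make this last equivariance check unnecessary.
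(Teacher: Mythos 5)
Your argument is correct, and it is the standard local-trivialization-plus-equivariant-gluing derivation of the relative Bott theorem from the absolute one; the paper itself gives no proof at all, simply citing Bott, Demazure, and Weyman's \cite[Cor.~(4.19)]{W}, the last of which is exactly the relative statement. Your own closing remark — that invoking Weyman's formulation makes the equivariance check unnecessary — is precisely what the authors do, so the two "approaches" coincide in substance.
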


\begin{proof} See \cite{Bo}, \cite{D}, or \cite[Cor.~(4.19)]{W}.
\end{proof}

\subsection{Basic definitions for triangulated categories}
We recall some basic definitions from the theory of triangulated categories
(cf.~\cite{Bondal,BO}).
\begin{defn}
\label{defn:excepcollect} An object $\sE$ in a triangulated category
$\sD$ is called \textit{an exceptional object} if $\Hom(\sE,\sE)\simeq\mC$
and $\Hom^{\bullet}(\sE,\sE)=0$ for $\bullet\not=0$. 
\end{defn}
\par
\begin{defn}
A triangulated subcategory $\sD'$ of $\sD$ is called \textit{admissible}
if there are right and left adjoint functors for the inclusion functor
$i_{*}:\sD'\to\sD$. 
\end{defn}
\par
\begin{defn}
\label{def:semi-orth} A sequence $\sD_{1},\dots,\sD_{m}$ of\textcolor{black}{{}
admissible} triangulated subcategories in a triangulated category
$\sD$ is called a \textit{semiorthogonal collection} if $\Hom_{\sD}(\sD_{i},\sD_{j})=0$
for any $i>j$. Moreover, if $\sD_{1},\dots,\sD_{m}$ generates $\sD$,
then it is called a \textit{semiorthogonal decomposition}.

A semiorthogonal collection of exceptional objects $\sE_{1},\dots,\sE_{n}$
is called \textit{an exceptional collection}. Moreover, if $\Hom^{\bullet}(\sE_{i},\sE_{j})=0$
holds for any $i,j$ and $\bullet\not=0$, then it is called \textit{a
strongly exceptional collection}. 
\end{defn}
Hereafter, in this article, we restrict our attention to the cases
of the derived categories of bounded complexes of coherent sheaves
on a variety. In such cases, a special type of semiorthogonal collection
plays an important role (cf.~\cite{HPD1,HPD2}).
\begin{defn}
For a variety $X$, a \textit{Lefschetz collection} of $\sD^{b}(X)$
is a semiorthogonal collection of the following form: \[
\sD_{0},\sD_{1}(1),\dots,\sD_{m-1}(m-1),\]
 where $0\subset\sD_{m-1}\subset\sD_{m-2}\subset\dots\subset\sD_{0}\subset\sD^{b}(X)$
and $(k)$ means the twist by $L^{\otimes k}$ with a fixed invertible
sheaf $L$. Moreover, if $\sD_{0},\sD_{1}(1),...,\sD_{m-1}(m-1)$
generate $\sD^{b}(X)$, then it is called a \textit{Lefschetz decomposition}.

Similarly, a \textit{dual Lefschetz collection} of $\sD^{b}(X)$ is
a semiorthogonal collection of the following form: \[
\sD_{m-1}(-(m-1)),\sD_{m-2}(-(m-2)),\dots,\sD_{0},\]
 where it holds that $0\subset\sD_{m-1}\subset\sD_{m-2}\subset\dots\subset\sD_{0}\subset\sD^{b}(X)$.
Moreover, if $\sD_{m-1}(-(m-1)),\sD_{m-2}(-(m-2)),\dots,\sD_{0}$
generate $\sD^{b}(X)$, then it is called a \textit{dual Lefschetz
decomposition}. 
\end{defn}
\;

\;

\section{{\bf Dual Lefschetz collection in $\sD^b(\hchow)$}}
\label{app:C}

\subsection{Symmetric determinantal loci $\nS^*_2$ and its Springer resolution $\widetilde{\nS}^*_2$}
\label{subsection:Sym}
We recall that $\nS^*_2\subset \mP({\ft S}^2 V)$ is the locus of quadrics in $\mP(V^*)$ of rank at most two. 
The Springer resolution of $\nS^*_2$ as in \cite[(2.1)]{Geom} is 
$\widetilde{\nS}_2^*:=\mP({\ft S}^2 \sF)$,
where
$\sF$ is the universal subbundle on $\rG(2,V)$.
As in \cite[Subsect.~3.2]{Geom},
we may identify $\nS^*_2$ with the Chow variety of length two $0$-cycles
in $\mP(V)$.
With this identification,
we may interpret \cite[Prop.~2.1 (2)]{Geom} in this situation as follow:
\[
\widetilde{\nS}_2^*:=\{([V_2],[\eta])\mid \Supp\, \eta\subset \mP(V_2)\}
\subset \rG(2,V)\times \nS^*_2,
\]
where we consider $\eta$ is a length two $0$-cycle in $\mP(V)$.  
From this description of $\widetilde{\nS}_2^*$,
it is easy to see that $\widetilde{\nS}_2^*$ is the Hilbert scheme of 
length two $0$-dimensional subschemes of $\mP(V)$ and
the Springer resolution $\widetilde{\nS}_2^*\to \nS^*_2$
is the Hilbert-Chow morphism. 
Note that $\widetilde{\nS}_2^*\to \nS^*_2$ is isomorphic over 
$[\eta]\in \nS^*_2$ if the support of $\eta$ consists of two distinct points since
then $\eta$ determines the line $\mP(V_2)$ 
such that $\eta\subset \mP(V_2)$ uniquely.
If the support of $\eta$ consists of one point, say $x$,
then the fiber over $[\eta]$ parameterizes lines through $x$.
Therefore the exceptional locus $E_f$ of 
$\widetilde{\nS}_2^*\to \nS^*_2$ is a prime divisor isomorphic to
$\mP(\sF)$. In particular, 
$E_f$ has a $\mP^{n-1}$-bundle structure over the rank one locus $\nS^*_1\simeq \mP(V)$.
\subsection{Homological properties of certain locally free sheaves on $\hchow$}
For brevity of notation, we set
\[
\chow:=\nS^*_2,\, \hchow:=\widetilde{\nS}^*_2,\, f\colon \hchow\to \chow,\,
g\colon \hchow\to \rG(2,V).
\]
We also denote by $H_{\hchow}$ and $L_{\hchow}$ the pull-back of $\sO_{\mP({\ft S}^2 V)}(1)$ and $\sO_{\rG(2,V)}(1)$. 
For brevity of notation, we often omit
the subscripts $\empty_\hchow$
from $H_{\hchow}$ and $L_{\hchow}$, 
and $g^*$ for the pull-back of coherent sheaves to $\hchow$ of $\rG(2,V)$.
 
We consider the Euler sequence
\[
0\to\sO_{\hchow}(-H)\to \ft{S}^{2}\sF\to T_{\hchow/G(2,V)}(-H)\to0
\]
associated to $\mP({\ft S}^2 \sF)\to \rG(2,V)$.
Twisting this by $L$ we obtain 
\begin{equation}
0\to\sO_{\hchow}(-H+L)\to \ft{S}^{2}\sF(L)\to T_{\hchow/G(2,V)}(-H+L)\to0.
\label{eq:F1a}\end{equation}

\begin{thm}
\label{thm:digGvanX}
Suppose $n=3,4$.
\begin{enumerate}[$(1)$]
\item
The ordered sequence $\sO_{\hchow}(-H)$,
$\sO_{\hchow}(-L)$, 
$\sF$, or
$T_{\hchow/\rG(2,V)}(-H+L)$
is semi-orthogonal.
\item
Let
${\sA}$ or ${\sB}$ be one of
the locally free sheaves 
$\sO_{\hchow}(-H)$,
$\sO_{\hchow}(-L)$, 
$\sF$, or
$T_{\hchow/\rG(2,V)}(-H+L)$ on $\hchow$.
Then 
\[
H^{\bullet}({\sA}^*\otimes {\sB} (-t))=0 \ \text{for}\ 
\begin{cases}
t=1 : n=3,\\
t=2,3: n=3 \ \text{and} \ {\sB}\not= T_{\hchow/\rG(2,V)}(-H+L),\\
1\leq t \leq 4: n=4.
\end{cases}
\]
\end{enumerate}
\end{thm}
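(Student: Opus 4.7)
The plan is a two-stage Borel--Weil--Bott computation organized around the projective bundle $g\colon \hchow = \mP(\ft{S}^2\sF) \to \rG(2,V)$ of relative dimension $2$. For each pair $(\sA,\sB)$ and each allowed $t$, I would compute $Rg_*(\sA^*\otimes\sB(-t))$ to obtain a complex of locally free sheaves on $\rG(2,V)$, and then invoke Theorem~\ref{thm:Bott} to conclude that every irreducible constituent of that complex has vanishing cohomology. Part~(1) amounts to six semiorthogonality checks (namely $t=0$, $i>j$ in the given ordering), while part~(2) requires the vanishing for all sixteen ordered pairs at each allowed $t\geq 1$; both follow from the same machinery.

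For the pairs $(\sA,\sB)$ built only from $\sO(-H)$, $\sO(-L)$, and $\sF$, the sheaf $\sA^*\otimes\sB(-t)$ is already of the form $g^*\sG\otimes \sO(kH)$ for an explicit $\sG$ on $\rG(2,V)$ and some $k\in\{-1,0\}$; $Rg_*$ is then determined by the standard formula $g_*\sO(kH) = \ft{S}^k(\ft{S}^2\sF)^*$ for $k\geq 0$ together with $R^\bullet g_*\sO(-H)=0$, and by relative Serre duality in the range $k\leq -3$. When either $\sA$ or $\sB$ equals $T_{\hchow/\rG(2,V)}(-H+L)$, I use the twisted Euler sequence~\eqref{eq:F1a}, or its dual, to replace the tangent sheaf by a two-term complex whose entries are of the handled form; the resulting hypercohomology spectral sequence then reduces the question again to cohomology on $\rG(2,V)$ of Schur functors of $\sF$ twisted by $\sO(kL)$. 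The case where both $\sA$ and $\sB$ equal $T_{\hchow/\rG(2,V)}(-H+L)$ is the most laborious, as one must tensor \eqref{eq:F1a} with its dual, but the bookkeeping remains formal.

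The final step on $\rG(2,V)=\rG(2,n+1)$ is a direct application of Theorem~\ref{thm:Bott} to bundles of the form $\ft{\Sigma}^{\beta}\sF^*\otimes\sO(kL)$ produced by the above reductions. The split between $n=3$ and $n=4$, and the additional restriction on $\sB$ when $n=3$ and $t\in\{2,3\}$, reflects the larger unobstructed range of twists available on $\rG(2,5)$: a given weight $\alpha+\rho$ has more chances of being rendered non-singular by an element of the symmetric group on $n+1$ letters when $n$ is larger, and when this fails the cohomology can survive. The main obstacle is combinatorial rather than conceptual: one must carry out the Borel--Weil--Bott check for all sixteen ordered pairs and every allowed $t$, and for the pairs involving the tangent sheaf one must verify that each vanishing survives the spectral sequence coming from the Euler resolution rather than being destroyed by a differential.
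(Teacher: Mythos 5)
Your proposal is correct and follows essentially the same route as the paper: reduce everything to $\rG(2,V)$ via the $\mP^{2}$-bundle $g$ and the twisted Euler sequence~(\ref{eq:F1a}), then invoke Theorem~\ref{thm:Bott}. The only real difference is one of economy: for $n=4$ the paper first proves the Serre-duality statement of Lemma~\ref{cla:key1} (using $K_{\hchow}=-5H+E_f$ and the vanishing on the fibers of $E_f\to \nS^*_1$) to fold the range $1\le t\le 4$ down to $t\le 2$, whereas you treat every twist directly and dispose of the strongly negative ones by relative Serre duality along $g$ --- both work, yours at the cost of a few more Bott checks (note only that along the fibers both $\sO(-H)$ and $\sO(-2H)$ have vanishing direct images, not just the former).
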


\vspace{0.8cm}
 We prepare the following lemma for our proof of the theorem in case $n=4$. The
lemma follows from \cite[Prop.4.8]{Lef}, but for the reader's convenience
we present a proof. 
\begin{lem}
\label{cla:key1} 
Suppose $n=4$. For $\sC=\sA^*\otimes \sB$, 
it holds that \[
H^{\bullet}(\hchow,\sC(-t))\simeq H^{8-\bullet}(\hchow,\sC^{*}(t-5))\text{ for any }t.\]
 \end{lem}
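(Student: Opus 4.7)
The plan is to derive the claim from Serre duality on the smooth $8$-fold $\hchow$, combined with an identification of $\omega_\hchow$ in terms of the polarisation $H$ and the exceptional divisor of the Springer resolution.

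\textbf{Step 1.} First I compute $\omega_\hchow$ using the $\mP^2$-bundle structure $g\colon\hchow=\mP({\ft S}^2\sF)\to\rG(2,V)$. Together with $\det({\ft S}^2\sF)=(\det\sF)^3=\sO_\rG(-3L)$ and $\omega_{\rG(2,V)}=\sO_\rG(-5L)$, the standard projective-bundle formula yields
\[
\omega_\hchow=\sO_\hchow(-3H-2L).
\]

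\textbf{Step 2.} Next I identify $\omega_\hchow\otimes\sO(5H)=\sO(2H-2L)$ with $\sO(E_f)$, where $E_f$ is the exceptional divisor of $f\colon\hchow\to\chow$. Indeed, the discriminant of the universal symmetric form $q\in g^{*}({\ft S}^2\sF)\otimes\sO(H)$ is a section of $\sO_\hchow(2H)\otimes g^{*}(\det\sF)^{2}=\sO(2H-2L)$, whose zero locus is precisely the rank-$\leq 1$ locus, i.e.\ $E_f$. In particular, $\chow$ is Gorenstein with $\omega_\chow\simeq\sO(-5)|_\chow$ and $f$ has discrepancy $E_f$ over the singular locus.

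\textbf{Step 3.} Serre duality on $\hchow$ then reads
\[
H^\bullet(\hchow,\sC(-t))^{\vee}\simeq H^{8-\bullet}(\hchow,\sC^{*}(t-5)\otimes\sO(E_f)),
\]
so the lemma reduces to showing that the twist by $\sO(E_f)$ leaves the total cohomology unchanged (up to vector-space duality) for each $\sC=\sA^{*}\otimes\sB$ in the list.

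\textbf{Step 4.} This last point I handle via the short exact sequence
\[
0\to\sC^{*}(t-5)\to\sC^{*}(t-5)(E_f)\to\bigl(\sC^{*}(t-5)(E_f)\bigr)\bigl|_{E_f}\to0.
\]
Since $E_f$ is a $\mP^{n-1}$-bundle over $\nS^{*}_{1}\simeq\mP(V)$ and $\sO(E_f)|_{E_f}$ is the relative $\sO(-1)$ of this bundle, I compute the restrictions $\sO(-H)|_{E_f}$, $\sO(-L)|_{E_f}$, $\sF|_{E_f}$ and $T_{\hchow/\rG(2,V)}(-H+L)|_{E_f}$ from the geometric description of $E_f$ as pairs $(V_{2},[v])$ with $v\in V_{2}$, and apply Borel-Weil-Bott fibrewise on the $\mP^{n-1}$-fibres.

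\textbf{Main obstacle.} The principal difficulty is Step 4: for each of the sixteen pairs $(\sA,\sB)$ one must verify the required vanishing on $E_f$, and since naive fibrewise vanishings do not always suffice (some restrictions $\sC^{*}|_{E_f}\otimes\sO_{E_f}(-1)$ can have nontrivial sections on the fibres), a more refined argument along the lines of \cite[Prop.~4.8]{Lef}, exploiting the categorical-resolution structure $\sD^{b}(\hchow)\to\sD^{b}(\chow)$ arising from the fact that $\sO(E_f)$ is the relative dualising sheaf of $f$, is needed to absorb the $\sO(E_f)$-twist.
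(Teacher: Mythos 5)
Your Steps 1--3 are correct and follow the same route as the paper, which simply quotes $K_{\hchow}=-5H+E_f$ from \cite[\S 2.1]{Geom}, applies Serre duality, and uses the same short exact sequence cutting out $E_f$. The proof breaks down at Step 4, and the breakdown is caused by a concrete computational error: the restriction of $\sO_{\hchow}(E_f)$ to a fibre $\Gamma\cong\mP^{3}$ of $E_f\to\nS^*_1\simeq\mP(V)$ is $\sO_{\mP^{3}}(-2)$, not the relative $\sO(-1)$. Indeed $E_f\simeq\mP(\sF)$ sits inside $\hchow=\mP({\ft S}^2\sF)$ via the relative Veronese $V_1\mapsto {\ft S}^2V_1\subset{\ft S}^2V_2$, so $\sO_{\hchow}(H)|_{E_f}\simeq\sO_{E_f}(2H_{\mP(\sF)})$; combining $E_f\sim 2(H-L)$ with $H_{\mP(\sF)}|_{\Gamma}=0$ and $L|_{\Gamma}=\det\sF^{*}|_{\Gamma}=\sO_{\mP^{3}}(1)$ gives $\sO_{\hchow}(E_f)|_{\Gamma}\simeq\sO_{\mP^{3}}(-2)$.

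With the correct twist, the ``naive'' fibrewise vanishing that you dismiss in your final paragraph does close the argument, and no refined input from the categorical resolution is needed. Each of $\sO_{\hchow}(-H)$, $\sO_{\hchow}(-L)$, $\sF$, $T_{\hchow/\rG(2,V)}(-H+L)$ restricts on $\Gamma$ to a direct sum of $\sO_{\mP^{3}}$ and $\sO_{\mP^{3}}(-1)$: $H|_{\Gamma}$ is trivial, $\sF|_{\Gamma}\simeq\sO_{\mP^{3}}\oplus\sO_{\mP^{3}}(-1)$ because $\Gamma$ parametrizes lines through the fixed point $[V_1]$, and restricting the Euler sequence to $\Gamma$ yields $T_{\hchow/\rG(2,V)}(-H+L)|_{\Gamma}\simeq\sO_{\mP^{3}}\oplus\sO_{\mP^{3}}(-1)$. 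Hence $\sC^{*}((t-5)H+E_f)|_{\Gamma}$ is a direct sum of $\sO_{\mP^{3}}(-1)$, $\sO_{\mP^{3}}(-2)$ and $\sO_{\mP^{3}}(-3)$ for every pair $(\sA,\sB)$ and every $t$, so all its cohomology vanishes and the Leray spectral sequence for $E_f\to\mP(V)$ finishes the proof. As written, your proposal leaves this essential step unproved: the appeal to ``a more refined argument along the lines of \cite[Prop.~4.8]{Lef}'' is neither carried out nor, once the normal bundle is computed correctly, necessary.
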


\begin{proof} 
By \cite[\S 2.1]{Geom},
we have $K_{\hchow}=-5H+E_f$
where $E_f$ is the $f$-exceptional divisor.
By the Serre duality,
we have $H^{\bullet}(\hchow,\sC(-t))\simeq H^{8-\bullet}(\hchow,\sC^{*}((t-5)H+E_f))$
for any $t$. By the exact sequence \[
0\to\sC^{*}((t-5))\to\sC^{*}((t-5)H+E_f)\to\sC^{*}((t-5)H+E_f)|_{E_f}\to0,\]
 we have only to show that $H^{8-\bullet}(E_f,\sC^{*}((t-5)H+E_f)|_{E_f})=0$.
As we see in Subsection \ref{subsection:Sym}, 
the image of $E_f$ by $f$ is $\nS^*_1$
and $E_f\to\nS^*_1\simeq\mP^{4}$ is a $\mP^{3}$-bundle. Therefore it
suffices to show the vanishing of cohomology groups of the restriction
of $\sC^{*}((t-5)H+E_f)|_{E_f}$ to a fiber $\Gamma$
of $E_f\to\nS^*_2$. By [ibid.], $\sO_{\hchow}(E_f)|_{\Gamma}\simeq\sO_{\mP^{3}}(-2)$
and $\sO_{\hchow}(H)|_{\Gamma}\simeq\sO_{\mP^{3}}$. 
As we see in Subsection \ref{subsection:Sym}, 
$E_f$ parameterizes
pairs of a point $x\in\mP(V)$ and a line $l$ through $x$. Therefore
a fiber $\Gamma\cong\mathbb{P}^{3}$ parameterizes lines through a
fixed point (i.e., $V_{1}\subset V_{2}\subset V$ for a fixed point
$x=[V_{1}])$. This implies that $g^{*}\sF^{*}|_{\Gamma}\simeq\sO_{\mP^{3}}\oplus\sO_{\mP^{3}}(1)$.
Restricting the natural injection $\sO_{\hchow}(-H)\to g^{*}\ft{S}^{2}\sF$
to $\Gamma$, we have an injection \[
\sO_{\mP^{3}}\to\sO_{\mP^{3}}\oplus\sO_{\mP^{3}}(-1)\oplus\sO_{\mP^{3}}(-2).\]
 Therefore, by the Euler sequence, we have $T_{\hchow/\rG(2,V)}(-H+L)|_{\Gamma}\simeq\sO_{\mP^{3}}\oplus\sO_{\mP^{3}}(-1)$.
Consequently, $\sC^{*}((t-5)H+E_f)|_{\Gamma}$ is a direct
sum of $\sO_{\mP^{3}}(-1)$, $\sO_{\mP^{3}}(-2)$ and $\sO_{\mP^{3}}(-3)$
for any $\sC$ and $t$, hence all of its cohomology
groups vanish. \end{proof}

\vspace{0.5cm}
 \noindent\textit{\textcolor{black}{Proof of Theorem $\ref{thm:digGvanX}$.}}
In any case, we can calculate the cohomology groups in a similar way.
Thus we only give computations only for $n=4$ and 
$\sA=\sF,\, \sB=T_{\hchow/\rG(2,V)}(-H+L)$.
Note that we may assume that $t=0,1,2$ by Lemma \ref{cla:key1},
which simplifies the computations considerably.
Twisted (\ref{eq:F1a}) with $\sF^*(-tH)$, we obtain
\begin{align}
\label{eq:F1attw}
0\to\sF^*(-(t+1)H+L)\to 
\sF^*\otimes \ft{S}^{2}\sF(-tH+L)\to\\ \sF^*\otimes T_{\hchow/G(2,V)}(-(t+1)H+L)\to0.\nonumber
\end{align}
We compute the cohomology groups 
of $\sF^*(-(t+1)H+L)$ and 
$\sF^*\otimes \ft{S}^{2}\sF(-tH+L)$.
We see that 
$H^{\bullet}(\sF^*(-(t+1)H+L))$ is $0$ for $t=0,1$ since
$\hchow\to \rG(2,V)$ is a $\mP^2$-bundle.
To compute
$H^{\bullet}(\sF^*(-3H+L))$, we take its Serre dual
$H^{8-\bullet}(\hchow,\sF(-3L))\simeq
H^{8-\bullet}(\rG(2,V),\sF^*(-4))$, which vanish
by Theorem \ref{thm:Bott}.
We also see that 
$H^{\bullet}(\sF^*\otimes \ft{S}^{2}\sF(-tH+L))$ is $0$ for $t=1,2$ since
$\hchow\to \rG(2,V)$ is a $\mP^2$-bundle, and
\[
H^{\bullet}(\sF^*\otimes \ft{S}^{2}\sF(L))\simeq
H^{\bullet}(\rG(2,V),\sF^*\otimes \{\ft{S}^{2}\sF\} (1))
\simeq H^{\bullet}(\rG(2,V),{\ft \Sigma}^{2,-1}\sF^*\oplus \sF^*),
\]
which vanish except for $\bullet=0$, and 
\[
H^0(\rG(2,V),{\ft \Sigma}^{2,-1}\sF^*\oplus \sF^*)\simeq
H^0(\rG(2,V),\sF^*)\simeq V^*
\]
by Theorem \ref{thm:Bott}.
Therefore, by (\ref{eq:F1attw}),
we have 
$H^{\bullet}(\sF^*\otimes T_{\hchow/G(2,V)}(-(t+1)H+L))$
is $0$ except for $\bullet=0$ and $t=0$, and
$H^0(\sF^*\otimes T_{\hchow/G(2,V)}(-H+L))\simeq V^*$.

\hfill $\square$

\subsection{Dual Lefschetz collection in $\sD^{b}(\hchow)$}

\label{subsection:LefX}

It is straightforward to obtain the following result from Theorem 
\ref{thm:digGvanX}.

 \begin{cor}
\label{thm:Gvan1}
Suppose $n=3$.
Let $\Lambda:=\left\{ 3,2,1_{a},1_{b}\right\}$
be an ordered set $(\Lambda,\prec)$. 
Define \[
(\sF_{\alpha})_{\alpha\in\Lambda}:=
(\sF_{3},\sF_{2},\sF_{1a},\sF_{1b})=
(\Omega^1_{\hchow/\rG(2,V)}(-L+H),\, \sF^*,\, \sO_{\hchow}(L),\,\sO_{\hchow}(H))
\]
be an ordered collection of sheaves.
We define the following triangulated subcategories of $\sD^b(\hchow):$
\begin{eqnarray*}
\sD^0_{\chow}&=&\sD^1_{\chow}:=\langle 
\mathcal{F}^*_{1b}, \mathcal{F}^*_{1a},
\mathcal{F}^*_{2}, \mathcal{F}^*_3\rangle,\\
\sD^2_{\chow}&=&\sD^3_{\chow}:=\langle 
\mathcal{F}^*_{1b}, \mathcal{F}^*_{1a},
\mathcal{F}^*_{2}\rangle.
\end{eqnarray*}
Then 
\begin{equation*}
\label{eq:dualLef}
\sD_{\chow}^3(-3),\sD_{\chow}^2(-2), \sD_{\chow}^1(-1),
\sD_{\chow}^0
\end{equation*}
is a dual Lefschetz collection,
where $(-t)$ represents 
the twist by the sheaf $\sO_{\hchow}(-tH)$.  
\end{cor}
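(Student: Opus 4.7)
The plan is to unwind the definition of a dual Lefschetz collection into a short list of Ext-vanishings among pairs of the generators $\sF_\alpha^*$, and then read these off directly from Theorem \ref{thm:digGvanX}. Admissibility of each $\sD^j_{\chow}$ is automatic once its generators form an exceptional collection on the smooth projective variety $\hchow$, so the entire content of the proof is Hom-vanishing.

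First I set up the underlying subcategories. Under the identifications $\sF_{1b}^* = \sO_{\hchow}(-H)$, $\sF_{1a}^* = \sO_{\hchow}(-L)$, $\sF_2^* = \sF$, and $\sF_3^* = T_{\hchow/\rG(2,V)}(-H+L)$, statement (1) of Theorem \ref{thm:digGvanX} is exactly the internal semi-orthogonality of the ordered collection generating $\sD^0_{\chow}$, and hence also of its full subcollection generating $\sD^2_{\chow}$. The chain $0 \subset \sD^2_{\chow} \subset \sD^0_{\chow}$ is immediate from the definitions.

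Next I will handle the cross-level semi-orthogonality. Unwinding the definition, the sequence $\sD^3_{\chow}(-3), \sD^2_{\chow}(-2), \sD^1_{\chow}(-1), \sD^0_{\chow}$ is semi-orthogonal if and only if
\[
\Ext^\bullet_{\hchow}\bigl(\sF_\alpha^*,\, \sF_\beta^*(-t)\bigr) = 0 \quad \text{for every } 0 \le a < b \le 3,\ t = b - a,
\]
and every generator $\sF_\alpha^*$ of $\sD^a_{\chow}$ and $\sF_\beta^*$ of $\sD^b_{\chow}$. Each such Ext group is the cohomology $H^\bullet\bigl((\sF_\alpha^*)^* \otimes \sF_\beta^*(-t)\bigr)$ appearing in Theorem \ref{thm:digGvanX}(2). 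Using $\sD^0_{\chow} = \sD^1_{\chow}$ and $\sD^2_{\chow} = \sD^3_{\chow}$, the pairs $(t,\beta)$ that occur fall into the cases $t = 1$ (with $\beta$ ranging over all four indices, or restricted to $\{1b, 1a, 2\}$), $t = 2$ with $\beta \in \{1b, 1a, 2\}$, and $t = 3$ with $\beta \in \{1b, 1a, 2\}$. For $n = 3$, Theorem \ref{thm:digGvanX}(2) gives the $t = 1$ vanishing for arbitrary $\sF_\beta^*$, and the $t = 2, 3$ vanishings under the sole restriction $\sF_\beta^* \neq T_{\hchow/\rG(2,V)}(-H+L)$, i.e.\ $\beta \neq 3$, which is exactly the condition that the definitions $\sD^2_{\chow} = \sD^3_{\chow}$ were designed to enforce.

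No real obstacle arises; the result is a bookkeeping consequence of Theorem \ref{thm:digGvanX}. The only points demanding care are the direction of Hom in a dual Lefschetz collection, so that the required vanishings take the shape $\Hom(\sD^a_{\chow}, \sD^b_{\chow}(-(b-a)))$ with $a < b$, and the observation that the ``drop'' of $\sF_3^*$ in passing from $\sD^0_{\chow}$ to $\sD^2_{\chow}$ is precisely aligned with the single exception in Theorem \ref{thm:digGvanX}(2) for $t = 2, 3$. That alignment is the entire combinatorial content of the corollary.
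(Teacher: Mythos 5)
Your proposal is correct and is exactly the argument the paper intends: the authors state only that the corollary "is straightforward to obtain from Theorem \ref{thm:digGvanX}", and your unwinding — internal semi-orthogonality of $\langle \sF_{1b}^*,\sF_{1a}^*,\sF_2^*,\sF_3^*\rangle$ from part (1), the cross-level vanishings $\Hom(\sD^a_{\chow},\sD^b_{\chow}(-(b-a)))=0$ from part (2), and the matching of the excluded case $\sB=T_{\hchow/\rG(2,V)}(-H+L)$ for $t=2,3$ with the omission of $\sF_3^*$ from $\sD^2_{\chow}=\sD^3_{\chow}$ — is precisely that bookkeeping. No gaps.
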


\begin{rem}
We can obtain the following results by a similar method to show
Theorem \ref{thm:digGvanX}.

\begin{enumerate}[(1)]
\item
We see that $\Ext^4(\mathcal{F}^*_3, \mathcal{F}^*_3(-2))\simeq \mC$.
This is the reason for the elimination of $\mathcal{F}^*_3$ from 
$\sD_{\chow}^0$.
\item
$(\mathcal{F}_{\alpha})_{\alpha\in\Lambda}$
is a strongly exceptional collection in $\sD^b(\hchow)$.
\item 
$\Hom$'s of the sheaves in the above collection are 
given by the following diagram:
\begin{equation*}
\begin{matrix}
\begin{xy}
(5,-12)*+{\circ}="c3",
(20,-12)*+{\circ}="c2",
(35,0)*+{\circ}="c1a",
(35,-24)*+{\circ}="c1b",
(0,-12)*{\mathcal{F}_3},
(20,-15)*{\mathcal{F}_2},
(38,3)*{\mathcal{F}_{1a}},
(38,-27)*{\mathcal{F}_{1b}},
(18,1)*{{\ft S}^2V^*},
(18,-25)*{{\ft S}^2 V^*},
(30,-8)*{V^*},
(15,-10)*{V^*},
(30,-16)*{V^*},
\ar "c3";"c2"
\ar "c2";"c1a"
\ar "c2";"c1b"
\ar @/^1pc/@{->} "c3";"c1a"
\ar @/_1pc/@{->} "c3";"c1b"
\end{xy}
\end{matrix}
\end{equation*}
\end{enumerate}
\end{rem}

\vspace{1cm}

In case $n=4$,
the following dual Lefschetz collection is suitable for our purpose (see \cite{HoTa3}).

\begin{cor}
\label{thm:Gvan1n=4} 
Suppose $n=4$.
Let $\Lambda:=\left\{ 3,2,1_{a},1_{b}\right\} $
be an ordered set $(\Lambda,\prec)$. 
Define an ordered collection of sheaves on $\hchow;$
\[
\begin{aligned}(\mathcal{F}_{\alpha})_{\alpha\in\Lambda} & :=(\mathcal{F}_{3},\mathcal{F}_{2},\mathcal{F}_{1a},\mathcal{F}_{1b})\\
 & =(\,\sO_{\hchow},\; \sF^{*},T_{\hchow/G(2,V)}(-H+2L),\sO_{\hchow}(L)\,).\end{aligned}
\]
Set $\sD_{\hchow}:=\langle\mathcal{F}_{1{b}}^{*},\mathcal{F}_{1{a}}^{*},\mathcal{F}_{2}^{*},\mathcal{F}_{3}^{*}\rangle\subset\sD^{b}(\hchow).$
 Then \[
\sD_{\hchow}(-4),\dots,\sD_{\hchow}(-1),\sD_{\hchow}\]
 is a dual Lefschetz collection, where $(-t)$ represents the twist
by the sheaf $\sO_{\hchow}(-tH)$. 
\end{cor}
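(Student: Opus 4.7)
The plan is to unwind the dual Lefschetz axiom and reduce the statement to a finite list of cohomology vanishings on $\hchow$, which are then established by the same Euler-sequence and Borel--Weil--Bott recipe used to prove Theorem~\ref{thm:digGvanX}. Semi-orthogonality of $\sD_{\hchow}(-4),\dots,\sD_{\hchow}(-1),\sD_{\hchow}$ is equivalent to the vanishings
\[
H^{\bullet}\bigl(\hchow,\,\mathcal{F}_{\alpha}\otimes\mathcal{F}_{\beta}^{*}(-tH)\bigr)=0,\qquad \alpha,\beta\in\Lambda,\ t\in\{1,2,3,4\},
\]
together with the within-block requirement that the ordered collection $\mathcal{F}_{1b}^{*},\mathcal{F}_{1a}^{*},\mathcal{F}_{2}^{*},\mathcal{F}_{3}^{*}$ is a semi-orthogonal collection of exceptional objects; this amounts to the vanishing of $H^{\bullet}(\hchow,\mathcal{F}_{\alpha}\otimes\mathcal{F}_{\beta}^{*})$ whenever $\alpha$ is strictly later than $\beta$ in the ordering $(1_{b},1_{a},2,3)$, plus the standard endomorphism computation for each $\mathcal{F}_{\alpha}$.

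The first reduction is Lemma~\ref{cla:key1}. A fibrewise computation parallel to the one in the lemma's proof (which gave $g^{*}\sF^{*}|_{\Gamma}\simeq\sO_{\mP^{3}}\oplus\sO_{\mP^{3}}(1)$ and $T_{\hchow/\rG(2,V)}(-H+L)|_{\Gamma}\simeq\sO_{\mP^{3}}\oplus\sO_{\mP^{3}}(-1)$) shows that every $\mathcal{C}:=\mathcal{F}_{\alpha}\otimes\mathcal{F}_{\beta}^{*}$ restricts on a $\mP^{3}$-fibre $\Gamma$ of $E_{f}\to\nS^{*}_{1}$ to a sum of line bundles of moderate degrees, so the lemma's hypothesis is satisfied and the Serre-duality symmetry $H^{\bullet}(\mathcal{C}(-t))\simeq H^{8-\bullet}(\mathcal{C}^{*}(t-5))$ applies. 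This cuts the work in half and leaves only the cases $t\in\{0,1,2\}$. For each remaining triple $(\alpha,\beta,t)$, I would tensor the twisted Euler sequence~\eqref{eq:F1a} by the appropriate shift of $\mathcal{F}_{\alpha}\otimes\mathcal{F}_{\beta}^{*}$, push forward along the $\mP^{2}$-bundle $g\colon\hchow\to\rG(2,V)$ (using that $R^{i}g_{*}\sO_{\hchow}(kH)$ vanishes for $-2\le k\le -1$), and evaluate the resulting Schur-functor cohomologies on $\rG(2,V)$ via Theorem~\ref{thm:Bott}.

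The hardest cases will be those in which one of $\mathcal{F}_{1a}=T_{\hchow/\rG(2,V)}(-H+2L)$ or its dual $\Omega^1_{\hchow/\rG(2,V)}(H-2L)$ appears, since a single application of the Euler sequence then leaves a middle term whose higher cohomology does not vanish outright. For such pairs one has to chase a long exact sequence: the middle term contributes Schur-functor pieces --- for instance $\sF^{*}\otimes\ft{S}^{2}\sF\simeq \ft{\Sigma}^{2,-1}\sF^{*}\oplus\sF^{*}$ with total global sections $V^{*}$, exactly as computed at the end of the proof of Theorem~\ref{thm:digGvanX} --- and one must verify that the connecting map matches a corresponding contribution from a neighbouring term so that the overall cohomology vanishes. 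All remaining pairs either reduce to Theorem~\ref{thm:digGvanX}(2) after cancellation of $L$-twists, or collapse to a one-step Borel--Weil--Bott calculation on $\rG(2,V)$; beyond this systematic bookkeeping, no new ideas are needed.
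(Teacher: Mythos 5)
Your plan is correct and would establish the corollary, but it is not the paper's route. The paper gives a purely formal three-step deduction from Theorem \ref{thm:digGvanX}: dualizing the collection of that theorem yields the dual Lefschetz collection with blocks $\Omega_{\hchow/\rG(2,V)}(H-L),\ \sF^{*},\ \sO_{\hchow}(L),\ \sO_{\hchow}(H)$ twisted by $-tH$ for $0\le t\le 4$; Lemma \ref{cla:key1} then gives $\Hom^{\bullet}(\sO_{\hchow}(H),\sC)\simeq\Hom^{8-\bullet}(\sC,\sO_{\hchow}(-4H))$, which transports $\sO_{\hchow}(H)$ from the tail of the $t=0$ block to the head of the $t=4$ block as $\sO_{\hchow}(-4H)$; finally tensoring everything by $\sO_{\hchow}(-L)$ and using $\sF^{*}(-L)\simeq\sF$ (rank two, $\det\sF=\sO_{\hchow}(-L)$) lands exactly on $\sD_{\hchow}=\langle\sO_{\hchow}(-L),\Omega_{\hchow/\rG(2,V)}(H-2L),\sF,\sO_{\hchow}\rangle$ --- no new cohomology is computed. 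You instead propose to re-verify every orthogonality from scratch. That works: your fibrewise extension of Lemma \ref{cla:key1} is legitimate (each $\mathcal{F}_{\gamma}|_{\Gamma}$ is a sum of $\sO_{\mP^{3}}$ and $\sO_{\mP^{3}}(1)$, so $\sC^{*}((t-5)H+E_{f})|_{\Gamma}$ has only summands of degrees $-1,-2,-3$), and after the $L$- and $H$-twist cancellations the products $\mathcal{F}_{\alpha}\otimes\mathcal{F}_{\beta}^{*}(-tH)$ reduce to shifted versions of the pairings already treated in Theorem \ref{thm:digGvanX}, so no genuinely new Borel--Weil--Bott computation or connecting-map cancellation in fact arises (the groups you worry about in the $T\otimes\Omega$ cases vanish outright rather than cancelling). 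The paper's argument buys economy, isolating the single new input --- the Serre-duality mutation of $\sO_{\hchow}(H)$ --- and reusing the theorem wholesale; yours buys independence from noticing that formal relationship, at the cost of redoing the bookkeeping and leaving the long-exact-sequence chases only sketched.
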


\begin{proof}
By taking the dual of the sheaves of Theorem \ref{thm:digGvanX} (1),
we have the following dual Lefschetz collection:
\begin{align*}
\Omega_{\hchow/\rG(2,V)}(-3H-L),\,
\sF^*(-4H),\,
\sO_{\hchow}(-4H+L),\,  
\sO_{\hchow}(-3H),  
\\
\dots\dots\\
\Omega_{\hchow/\rG(2,V)}(H-L),\,
\sF^*,\,
\sO_{\hchow}(L),\,  
\sO_{\hchow}(H).
\end{align*}
Let $\sC$ be one of the sheaves in this collection 
except $\sO_{\hchow}(H)$.
Then by Lemma \ref{cla:key1}, 
it holds that \[
\Hom^{\bullet}(\sO_{\hchow}(H),\sC)\simeq \Hom^{8-\bullet}(\sC,\sO_{\hchow}(-4H)).\]
Therefore we obtain 
the following dual Lefschetz collection.
\begin{align*}
\sO_{\hchow}(-4H),\,
\Omega_{\hchow/\rG(2,V)}(-3H-L),\,
\sF^*(-4H),\,
\sO_{\hchow}(-4H+L),\,   
\\
\dots\dots\\
\sO_{\hchow},\,\Omega_{\hchow/\rG(2,V)}(H-L),\,
\sF^*,\,
\sO_{\hchow}(L).
\end{align*}
Tensoring this with
$\sO_{\hchow}(-L)$,
we obtain the desired  
dual Lefschetz collection.
\end{proof}
\begin{rem}
We may also obtain the following results by a similar method to show
Theorem \ref{thm:digGvanX} (see \cite{Arxiv}).

\begin{enumerate}[(1)]
\item
$(\mathcal{F}_{\alpha})_{\alpha\in\Lambda}$ is a strongly exceptional
collection of $\sD^{b}(\hchow)$.
\item 
$\Hom$'s of the sheaves in the above collection are 
given by the following diagram$:$ 
\begin{equation}
\begin{matrix} 
\begin{xy} 
(5,-12)*+{\circ}="c3", 
(20,-12)*+{\circ}="c2", 
(35,0)*+{\circ}="c1a", 
(35,-24)*+{\circ}="c1b", 
(0,-12)*{\mathcal{F}_3}, 
(20,-15)*{\mathcal{F}_2}, 
(38,3)*{\mathcal{F}_{1_a}}, 
(38,-27)*{\mathcal{F}_{1_b}}, 
(18,1)*{{\ft S}^2V^*}, 
(18,-25)*{\wedge^2V^*}, 
(30,-8)*{V^*}, 
(15,-10)*{V^*}, 
(30,-16)*{V^*}, 
\ar "c3";"c2" 
\ar "c2";"c1a" 
\ar "c2";"c1b" 
\ar @/^1pc/@{->} "c3";"c1a" 
\ar @/_1pc/@{->} "c3";"c1b" 
\end{xy} 
\end{matrix}  
\end{equation}
\end{enumerate}
\end{rem}

\subsection{Categorical and noncommutative resolution of $\sD^b(\chow)$}

Since $\hchow\to\chow$ is a resolution of rational singularities
whose exceptional locus is a prime divisor $E_{f}$, we have
a triangulated subcategory $\check{\sD}\subset\sD^{b}(\hchow)$ called
a categorical resolution of $\sD^{b}(\chow)$ for every dual Lefschetz
decomposition of $\sD^{b}(E_{f})$ \cite[Theorem 1]{Lef}. There is
a natural dual Lefschetz decomposition of $\sD(E_{f})$ for the $\mP^{n-1}$-bundle $E_{f}\to\nS^*_1=v_{2}(\mP(V))$ \cite{Sa}:
\begin{enumerate}[(1)]
\item
($n$ is even)
$$
\sD^b(E_f) = \langle \sC_{\frac n2-1}(-n+2),\dots,\sC_1(-2),\sC_0\rangle,
$$
where $\sC_0 = \sC_1 = \dots = \sC_{\frac n2-1} = \langle 
(f|_{E_f})^*{\sD^b(\mP(V))}(-1),
(f|_{E_f})^*{\sD^b(\mP(V))}\rangle$,
where $(-k)$ is the twist by $\sO_{\hchow}(-kL)|_{E_f}$.
\item
($n$ is odd)
$$
\sD^b(E_f) = \langle \sC_{\frac {n-1}{2}}(-n+1),\dots,\sC_1(-2),\sC_0\rangle,
$$
where $\sC_0 = \sC_1 = \dots = \sC_{\frac {n-3}{2}} = \langle 
(f|_{E_f})^*{\sD^b(\mP(V))}(-1),
(f|_{E_f})^*{\sD^b(\mP(V))}\rangle$,
and
$\sC_{\frac {n-1}{2}} = \langle 
(f|_{E_f})^*{\sD^b(\mP(V))}\rangle$.
\end{enumerate}
Let $\check{\sD}$ be the triangulated subcategory of $\sD^b(\hchow)$
which consists of objects $F$ such that $\iota^*F\in \sC_0$,
where $\iota$ is the natural closed embedding $E_f\to \hchow$.
By \cite{Lef},
$\sD^b(\hchow)$ has the following semi-orthogonal decomposition:
$$
\sD^b(\hchow)=
\begin{cases} 
\langle \iota_*\sC_{\frac n2-1}(-n+2),\dots,\iota_*\sC_1(-2), \check{\sD}\rangle:\text{$n$ is even}.\\
\langle \iota_*\sC_{\frac {n-1}{2}}(-n+1),\dots,\iota_*\sC_1(-2), \check{\sD}\rangle:\text{$n$ is odd}.
\end{cases}
$$
Recall that $\chow$ is Gorenstein if and only if $n$ is even \cite[\S 2.1]{Geom}.
When $n$ is even, $\check{\sD}$ is strongly crepant.
Indeed, in this case, the conditions of
[ibid., Prop.~4.7] holds:
\begin{itemize}
\item $\sC_0 = \sC_1 = \dots = \sC_{\frac n2-1}$(the decomposition is
called {\it{rectangular}}), and
\item 
the discrepancy of $f$ is $\frac{n-2}{2}$ (\cite[\S 2.1]{Geom}),
which is equal to the length of the decomposition of
$\sD^b(E_f)$.
\end{itemize}

The categorical resolution is also related to
the noncommutative resolution by Van den Bergh (\cite[Theorem 2]{Lef}).
It is easy to see that
\[
\sR:=f_*\sE\!\text{\it{nd}}\, (\sO_{\hchow}\oplus \sO_{\hchow}(-L))
\] 
satisfies the assumptions of [ibid.];
to check $\sC_0$ is generated by $\iota^* \sE$
is obvious, and 
to check $\sE$ is tilting follows from the standard relative vanishing theorem.
Thus 
\[
\check{\sD}\simeq \sD(\chow,\sR).
\]
\subsection{Dual Lefschetz decomposition of the categorical resolution for $n=2,3,4$}
\label{sub:decomp}
\begin{thm}
\label{thm:dualLef}
For $n=2, 3, 4$,
we have the following dual Lefschetz decomposition of
$\check{\sD}:$ 
\[
\check{\sD}\simeq 
\langle \sA_n(-n),\dots, \sA_1(-1),\sA_{0} \rangle,
\]
where
\begin{eqnarray*}
\sA_0&=&\sA_1=\sA_2=
\langle \sO_{\hchow}(-H), \sO_{\hchow}(-L),
\sF\rangle \ \text{for $n=2$},\\
\sA_0&=&\sA_1=
\langle \sO_{\hchow}(-H),\, \sO_{\hchow}(-L),\, \sF,\, T_{\hchow/\mathrm{G}(2,V)}(-H+L)\rangle,\ \text{and}\\
\sA_2&=&\sA_3=
\langle \sO_{\hchow}(-H),\, \sO_{\hchow}(-L),\, \sF\rangle
\ \text{for $n=3$},\\
\sA_0&=&\cdots=\sA_4=
\langle \sO_{\hchow}(-H),\, \sO_{\hchow}(-L),\, \sF,\, T_{\hchow/\mathrm{G}(2,V)}(-H+L)\rangle \ \text{for $n=4$}.\\
\end{eqnarray*}
\end{thm}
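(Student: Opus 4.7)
I will verify three things: \textbf{(a)} each listed generator of $\sA_k$ lies in $\check{\sD}$, \textbf{(b)} the ordered family $\sA_n(-n),\dots,\sA_0$ is dual-Lefschetz semi-orthogonal, and \textbf{(c)} it generates $\check{\sD}$.

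For \textbf{(a)}, I would apply the fiberwise criterion coming from the relative semi-orthogonal decomposition of the $\mP^{n-1}$-bundle $f|_{E_f}\colon E_f\to\nS^*_1$: an object of $\sD^b(E_f)$ lies in $\sC_0=\langle (f|_{E_f})^*\sD^b(\mP(V))(-1),\,(f|_{E_f})^*\sD^b(\mP(V))\rangle$ precisely when its restriction to every fiber $\Gamma\cong\mP^{n-1}$ is a sum of shifts of $\sO_\Gamma$ and $\sO_\Gamma(-1)$. The fiber computations in the proof of Lemma \ref{cla:key1} (and their direct analogues for $n=2,3$) yield $\sO_\hchow(-H)|_\Gamma\cong\sO_\Gamma$, $\sO_\hchow(-L)|_\Gamma\cong\sO_\Gamma(-1)$, $g^*\sF|_\Gamma\cong\sO_\Gamma\oplus\sO_\Gamma(-1)$, and $T_{\hchow/\rG(2,V)}(-H+L)|_\Gamma\cong\sO_\Gamma\oplus\sO_\Gamma(-1)$, establishing \textbf{(a)}. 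For \textbf{(b)}, the semi-orthogonality requirement unpacks to $H^\bullet(\hchow,\sA^*\otimes\sB(-t))=0$ with $t=b-a$, which is exactly Theorem \ref{thm:digGvanX}(2); the asymmetric shape of $\sA_k$ for $n=3$, which drops $T_{\hchow/\rG(2,V)}(-H+L)$ from $\sA_2,\sA_3$, is engineered so that the needed $(t,\sB)$ pairs stay within the range covered by that theorem. For $n=2$ the smaller set of vanishings is produced directly from Borel--Weil--Bott (Theorem \ref{thm:Bott}) on $\rG(2,V)\cong\mP^2$.

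For \textbf{(c)}, I would combine the conjectural collection with the complementary Kuznetsov pieces $\iota_*\sC_k(-k-1)$ appearing in $\sD^b(\hchow)=\langle\iota_*\sC_\bullet,\check{\sD}\rangle$ to form an enlarged semi-orthogonal collection in $\sD^b(\hchow)$, and then show it is full. Semi-orthogonality of the enlargement is automatic once \textbf{(a)} is in hand: $\sA_k\subset\check{\sD}$ yields the required cross-vanishings. The total length is $3\binom{n+1}{2}$, matching $\rank K_0(\hchow)$ as computed from the $\mP^2$-bundle $g\colon\hchow\to\rG(2,V)$ via Orlov's projective bundle theorem and Kapranov's exceptional collection on the Grassmannian. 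To upgrade this numerical coincidence to genuine fullness, I would resolve each Orlov--Kapranov generator $g^*\ft{\Sigma}^\beta\sF^*\otimes\sO_\hchow(-kH)$ inside the enlarged collection; uniqueness of the semi-orthogonal decomposition with the fixed $\iota_*\sC_\bullet$-piece then forces the $\sA$-part to coincide with $\check{\sD}$.

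The main obstacle is \textbf{(c)}: fullness is not forced by the K-theoretic match, and the explicit resolution step (a relative Beilinson-type calculation along the $\mP^2$-bundle $\hchow\to\rG(2,V)$) is where the substantive work sits. The case $n=2$ is simpler, since $\frac{n-2}{2}=0$ leaves no $\iota_*\sC_k$-pieces and $\check{\sD}=\sD^b(\hchow)$, so fullness reduces to checking that a length-$9$ dual Lefschetz collection exhausts $\sD^b(\hchow)$. An attractive alternative for $n=3,4$ is to exploit the identification $\check{\sD}\simeq\sD(\chow,\sR)$ for $\sR=f_*\sE\!\text{\it{nd}}(\sO_\hchow\oplus\sO_\hchow(-L))$ from the preceding subsection and argue generation directly inside the module category on the tilting summands.
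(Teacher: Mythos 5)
Your steps \textbf{(a)} and \textbf{(b)} coincide with the paper's argument: membership of the generators in $\check{\sD}$ is checked exactly by restricting to fibers $\Gamma\cong\mP^{n-1}$ of $E_f\to\nS^*_1$ (as in the proof of Lemma \ref{cla:key1}), and semi-orthogonality is precisely Theorem \ref{thm:digGvanX}, including your correct observation that the shrinking of $\sA_2,\sA_3$ for $n=3$ is forced by the failure of the $t=2,3$ vanishing when $\sB=T_{\hchow/\rG(2,V)}(-H+L)$. The $n=2$ case is also essentially as you describe (Beilinson/Orlov for the $\mP^2$-bundle plus one mutation of $\sO_{\hchow}$ using $K_{\hchow_2}=-3H$).

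The genuine gap is \textbf{(c)}, and you have correctly located it but not closed it. The K-theoretic length count proves nothing about fullness, and your fallback --- ``resolve each Orlov--Kapranov generator inside the enlarged collection'' --- is exactly the substantive content that is missing: for instance exhibiting $\ft{\Sigma}^{\beta}\sF^*(-kH)$ or $\ft{S}^2\sF(-kH)$ in the subcategory generated by the collection requires a web of non-obvious exact sequences. The paper does not resolve the Orlov--Kapranov generators directly; instead it runs Kuznetsov's induction on $\dim V$ (\cite{KuzGrIsotropic}): choose a hyperplane $V'\subset V$, use the Koszul resolution $0\to\sO_{\hchow_n}(-L)\to\sF\to\sO_{\hchow_n}\to j_*\sO_{\hchow_{n-1}}\to0$ of (\ref{eq:red}) to reduce $\sC=0$ on $\hchow_n$ to $j^*\sE=0$, i.e.\ to fullness on $\hchow_{n-1}$, with base case $n=2$. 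The inductive step itself reduces to Lemma \ref{lem:PQ} (a finite list of products $\sP\otimes\sQ\in\sC^{\perp}$), whose verification needs the exact sequences of Lemmas \ref{lem:exactseq} and \ref{lem:long} and Corollary \ref{cor:key} --- in particular the sequences (\ref{exa1}) and (\ref{exanew}) are built from a relative spinor-sheaf/Clifford-multiplication construction on the conic fibration $E_f\to\rG(2,V)$, which is an idea absent from your proposal. Your alternative of arguing generation inside $\sD(\chow,\sR)$ on the tilting summands is plausible in principle but is likewise only named, not executed; as written, the fullness claim is unproven.
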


We set 
\[
\text{$\hchow_n:=\mP({\ft S}^2 \sF),\, \chow_n:={\ft S}^2 \mP(V)$
for $V$ with $\dim V=n+1$.}
\]

\begin{proof}[{\bf Proof of Theorem $\ref{thm:dualLef}$ in case $n=2$}]~

Note that $\check{\sD}$ is equivalent to $\sD^b(\hchow_2)$
since $\hchow_2\to \chow_2$ is crepant.
Since $\hchow_2$ is a $\mP^2$-bundle over the projective plane $\mathrm{G}(2,V)\simeq \mP(V^*)$,
the derived category $\sD^b(\hchow_2)$ has the following standard semi-orthogonal decomposition by Beilinson's and Orlov's results:
\begin{equation*}
\begin{matrix}
\sD^b(\hchow_2)=\langle &\sO_{\hchow_3}(-2H-L),& \sF(-2H),& \sO_{\hchow_3}(-2H) &\\
& \sO_{\hchow_3}(-H-L),& \sF(-H),& \sO_{\hchow_3}(-H),\\
& \sO_{\hchow_3}(-L),& \sF,& \sO_{\hchow_3} &\rangle.
\end{matrix}
\end{equation*}
Since $K_{\hchow_2}=-3H$,
we obtain by mutating $\sO_{\hchow_2}$ to the left;
\begin{equation}
\label{eqn:hchow2}
\begin{matrix}
\sD^b(\hchow_2)=\langle &\sO_{\hchow_3}(-3H),& \sO_{\hchow_3}(-2H-L),& \sF(-2H),& &\\
     & \sO_{\hchow_3}(-2H),& \sO_{\hchow_3}(-H-L),& \sF(-H),& \\
       & \sO_{\hchow_3}(-H),& \sO_{\hchow_3}(-L),& \sF& \rangle,
\end{matrix}
\end{equation}
which is nothing but the desired result for $n=2$.
\end{proof}

The rest of this subsection is occupied with our proof of 
Theorem $\ref{thm:dualLef}$ in case $n=3,4$.

We have already shown
that $\sA_n(-n),\dots,\sA_1(-1), \sA_{0}$ is semi-orthogonal
in Theorem \ref{thm:digGvanX}.
Besides, they are contained in the left orthogonal to
\[
\iota_*\sC_1(-2):=
\begin{cases}
\iota_*(f|_{E_f})^*{\sD^b(\mP^3)}(-2): n=3,\\
\langle 
(f|_{E_f})^*{\sD^b(\mP^4)}(-3),
(f|_{E_f})^*{\sD^b(\mP^4)}(-2)\rangle: n=4
\end{cases}
\]
since
the restrictions of
\[
\sO_{\hchow_n}(-H),\, \sO_{\hchow_n}(-L),\, \sF,\, T_{\hchow_n/\mathrm{G}(2,V)}(-H+L)
\]
to a fiber\,$\simeq \mP^{n-1}$ of $E_f\to \mP(V)$
are direct sums of $\sO_{\mP^{n-1}}$ and $\sO_{\mP^{n-1}}(-1)$
(cf.~the proof of Lemma \ref{cla:key1}).

We set
\[
\begin{cases}
\sA:=\sA_0=\sA_1,\, \sA':=\sA_2=\sA_3: n=3,\\
\sA:=\sA_0=\cdots=\sA_4: n=4.
\end{cases}
\]
We define the following triangulated subcategory $\sC\subset \sD^b(\hchow_n)$:
\[
\sC:=
\begin{cases}
\empty^{\perp}\langle \iota_*\sC_1(-2), \sA'(-3), \sA'(-2), \sA(-1), \sA \rangle: n=3,\\
\empty^{\perp}\langle \iota_*\sC_1(-2), \sA(-4), \sA(-3), \sA(-2), \sA(-1), \sA \rangle: n=4.
\end{cases}
\]
Then, by \cite{Bondal},
we have the following semiorthogonal decomposition of $\sD^b(\hchow_n)$:
\begin{equation}
\label{eqn:start}
\begin{scriptsize}
\begin{matrix}
\sD^b(\hchow_3)= \langle  \iota_*\sC_1(-2), &\sO_{\hchow_3}(-4H),& \sO_{\hchow_3}(-3H-L),& \sF(-3H),& &\\
       & \cellcolor[gray]{0.85} \sO_{\hchow_3}(-3H),& \cellcolor[gray]{0.85}\sO_{\hchow_3}(-2H-L),& \cellcolor[gray]{0.85}\sF(-2H),& &\\
     & \cellcolor[gray]{0.85}\sO_{\hchow_3}(-2H),& \cellcolor[gray]{0.85}\sO_{\hchow_3}(-H-L),& \cellcolor[gray]{0.85}\sF(-H),& T_{\hchow_3/\rG(2,V)}(-2H+L)& \\
       & \cellcolor[gray]{0.85}\sO_{\hchow_3}(-H),& \cellcolor[gray]{0.85}\sO_{\hchow_3}(-L),& \cellcolor[gray]{0.85}\sF,& T_{\hchow_3/\rG(2,V)}(-H+L),&\sC\rangle.
\end{matrix}
\end{scriptsize}
\end{equation}
Note that the restriction of the gray part of (\ref{eqn:start}) 
corresponds to the r.h.s. of (\ref{eqn:hchow2}).
\begin{equation}
\label{eqn:start2}
\begin{scriptsize}
\begin{matrix}
\sD^b(\hchow_4)= \langle  \iota_*\sC_1(-2), &\sO_{\hchow_3}(-5H),& \sO_{\hchow_3}(-4H-L),& \sF(-4H),& T_{\hchow_3/\rG(2,V)}(-5H+L) &\\
       & \cellcolor[gray]{0.85} \sO_{\hchow_3}(-4H),& \cellcolor[gray]{0.85}\sO_{\hchow_3}(-3H-L),& \cellcolor[gray]{0.85}\sF(-3H),& T_{\hchow_3/\rG(2,V)}(-4H+L)&\\
       & \cellcolor[gray]{0.85} \sO_{\hchow_3}(-3H),& \cellcolor[gray]{0.85}\sO_{\hchow_3}(-2H-L),& \cellcolor[gray]{0.85}\sF(-2H),&T_{\hchow_3/\rG(2,V)}(-3H+L) &\\

     & \cellcolor[gray]{0.85}\sO_{\hchow_3}(-2H),& \cellcolor[gray]{0.85}\sO_{\hchow_3}(-H-L),& \cellcolor[gray]{0.85}\sF(-H),& \cellcolor[gray]{0.85}T_{\hchow_3/\rG(2,V)}(-2H+L)& \\
       & \cellcolor[gray]{0.85}\sO_{\hchow_3}(-H),& \cellcolor[gray]{0.85}\sO_{\hchow_3}(-L),& \cellcolor[gray]{0.85}\sF,& \cellcolor[gray]{0.85}T_{\hchow_3/\rG(2,V)}(-H+L),&\sC\rangle.
\end{matrix}
\end{scriptsize}
\end{equation}
Note that the restriction of the gray part of (\ref{eqn:start2}) 
corresponds to a part of the r.h.s. of (\ref{eqn:start}).

It remains to show the fullness of the collection
$\iota_*\sC_1(-2), \sA_n(-n),\dots, \sA_1(-1),\sA_{0}$, equivalently,
$\sC=0$.
Following the inductive argument of Kuznetsov in the proof of \cite[Thm.~4.1]{KuzGrIsotropic}, we reduce the proof for $\hchow_3$ to the fullness of the collection
for $\hchow_2$, and then
the proof for $\hchow_4$ to the fullness of the collection
for $\hchow_3$.
For this,
we take an $n$-dimensional vector subspace $V'\subset V$.
Then we simply denote 
$\mP({\ft S}^2 \sF)$ over $\rG(2,V')$ by $\hchow_{n-1}$
as a subvariety of $\hchow_n$. 
Let $j\colon \hchow_{n-1}\hookrightarrow \hchow_n$ 
be the natural closed immersion.
It is well-known (see \cite[Lem.~4.4]{KuzGrIsotropic})
that the Koszul resolution of 
$j_*\sO_{\hchow_{n-1}}$ is the following form: 
\begin{equation}
\label{eq:red}
0\to \sO_{\hchow_n}(-L)\to
\sF\to \sO_{\hchow_{n-1}}\to j_*\sO_{\hchow_{n-1}}\to 0.
\end{equation}
Then, by a similar result to \cite[Lem.~4.5]{KuzGrIsotropic},
we have only to show $j^*\sE=0$ for any object $\sE$ of $\sC$
since we may choose $V'$ freely and
we may assume the fullness of the collection for $\hchow_{n-1}$.
For this, arguing as in \cite[p.165]{KuzGrIsotropic},
we have only to show 
the following claim by (\ref{eq:red}):
\begin{lem}
\label{lem:PQ}
The following sheaves of the form $\sP\otimes \sQ$
are contained in $\sC^{\perp}$\,$:$
\begin{itemize}
\item $(n=3)$
$\sP$ is one of the sheaves in the gray part of 
$(\ref{eqn:start})$, namely,
\[
\sP=\sO_{\hchow_3}(-iH)\, (1\leq i\leq 3),\,
\sO_{\hchow_3}(-iH-L)\, (0\leq i\leq 2),\,
\sF(-iH)\, (0\leq i\leq 2),
\]
$\sQ$ is one of the sheaves in the exact sequence
$(\ref{eq:red})$ except $j_*\sO_{\hchow_2}$, namely,
\[
\sQ= 
\sO_{\hchow_3}(-L),\,
\sF,\, \sO_{\hchow_3}.
\]
\item
$(n=4)$
$\sP$ is one of the sheaves in the gray part of 
$(\ref{eqn:start2})$, namely,
\begin{align*}
\sP=\sO_{\hchow_4}(-iH)\, (1\leq i\leq 4),\,
\sO_{\hchow_4}(-iH-L)\, (0\leq i\leq 3),\,
\sF(-iH)\, (0\leq i\leq 3),\\
T_{\hchow_3/\rG(2,V)}(-iH+L)
\, (i=1,2),
\end{align*}
$\sQ$ is one of the sheaves in the exact sequence
$(\ref{eq:red})$ except $j_*\sO_{\hchow_3}$, namely,
\[
\sQ= 
\sO_{\hchow_4}(-L),\,
\sF,\, \sO_{\hchow_4}.
\]
\end{itemize}
\end{lem}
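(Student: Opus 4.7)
The plan is to verify that each $\sP\otimes\sQ$ in the list lies in the triangulated subcategory
\[
\sC^{\perp} \;=\; \langle\iota_*\sC_1(-2),\,\sA_n(-n),\dots,\sA_1(-1),\sA_0\rangle
\]
of $\sD^b(\hchow_n)$, which by the semiorthogonal decompositions (\ref{eqn:start}) and (\ref{eqn:start2}) is exactly the subcategory generated by $\iota_*\sC_1(-2)$ together with every sheaf listed in those decompositions. So the task is to express each $\sP\otimes\sQ$ as an iterated cone of these listed collection members.

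I would split the verification by $\sQ$. The case $\sQ=\sO_{\hchow_n}$ is immediate. For $\sQ=\sO_{\hchow_n}(-L)$, the products $\sO_{\hchow_n}(-iH-L)$ with $\sP=\sO_{\hchow_n}(-iH)$ are directly listed; the remaining products $\sO_{\hchow_n}(-iH-2L)$ and $\sF(-iH-L)$ can be obtained by combining the tautological sequence $0\to\sF\to V\otimes\sO_{\rG}\to\sQ\to 0$ on $\rG(2,V)$ with its exterior and symmetric powers, which resolve $\sO_{\rG}(-2L)$ and $\sF(-L)$ as iterated cones of $\sO_{\rG},\sF,\sO_{\rG}(-L)$. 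Pulling back to $\hchow_n$ and twisting by $\sO_{\hchow_n}(-iH)$ then lands these in our collection for the relevant range of $i$. The case $\sQ=\sF$ will reduce to the previous one via the rank-two identity $\sF\otimes\sF\simeq\ft{S}^2\sF\oplus\wedge^2\sF=\ft{S}^2\sF\oplus\sO_{\hchow_n}(-L)$, leaving only $\ft{S}^2\sF(-iH-\epsilon L)$ for $\epsilon\in\{0,1\}$; these are handled by the same Koszul-type resolutions on $\rG(2,V)$, which also express $\ft{S}^2\sF$ in terms of $\sO_{\rG},\sF,\sO_{\rG}(-L)$. For $n=4$, the additional products $T_{\hchow_n/\rG(2,V)}(-iH+L)\otimes\sQ$ would be reduced to the preceding cases by using the relative Euler sequence
\[
0\to\sO_{\hchow_n}(-H)\to\ft{S}^2\sF\to T_{\hchow_n/\rG(2,V)}(-H)\to 0
\]
twisted by $\sO(L-(i-1)H)$ to write $T_{\hchow_n/\rG(2,V)}(-iH+L)$ as the two-term complex $[\sO_{\hchow_n}(-iH+L)\to\ft{S}^2\sF(-(i-1)H+L)]$, both of whose terms (after tensoring with $\sQ$) are covered by the first part of the strategy.

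The main obstacle will be the combinatorial bookkeeping: one must check that the Koszul resolutions on $\rG(2,V)$, pulled back and twisted, consistently land on collection members with admissible $-iH$ shifts, and that none of the intermediate sheaves drift outside the allowed $i$-ranges of the rows in (\ref{eqn:start}) / (\ref{eqn:start2}); in particular, the Euler sequence should only ever produce $T_{\hchow_n/\rG(2,V)}$-terms with the $+L$ twist in which they actually appear in the collection. The roughly rectangular structure of the collection, with the full width of the $T_{\hchow_n/\rG(2,V)}(-iH+L)$-row for $n=4$ providing enough slack, and the correspondingly narrower list of the lemma for $n=3$ matching the fewer $T$-members ($T_{\hchow/\rG(2,V)}(-H+L)$ and $T_{\hchow/\rG(2,V)}(-2H+L)$) there, should make this verification routine, though lengthy.
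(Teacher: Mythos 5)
Your overall architecture (splitting by $\sQ$, disposing of $\sQ=\sO_{\hchow_n}$ immediately, reducing $\sQ=\sF$ to $\ft{S}^2\sF$ via $\sF\otimes\sF\simeq\ft{S}^2\sF\oplus\sO_{\hchow_n}(-L)$, and attacking the $T_{\hchow_n/\rG(2,V)}$-terms with the Euler sequence) agrees with the paper's. But the load-bearing step of your proposal fails. You claim that $\sO_{\rG(2,V)}(-2L)$, $\sF(-L)$ and $\ft{S}^2\sF$ can be resolved, via exterior and symmetric powers of the tautological sequence on $\rG(2,V)$, as iterated cones of $\sO$, $\sF$, $\sO(-L)$. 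No such resolutions exist. Already on $\rG(2,4)$, Borel--Weil--Bott gives $H^{\bullet}(\ft{S}^2\sF)=H^{\bullet}(\sF^*\otimes\ft{S}^2\sF)=H^{\bullet}(\ft{S}^2\sF(L))=0$, so $\ft{S}^2\sF$ is a nonzero object of the \emph{right orthogonal} of $\langle\sO(-L),\sF,\sO\rangle$ and hence cannot lie in that subcategory; and since $\Hom(\sO(-2L),\ft{S}^2\sF)=H^0(\ft{S}^2\sF^*)=\ft{S}^2V^*\neq 0$ and $\Hom(\sF(-L),\ft{S}^2\sF)\supset H^0(\sF^*)=V^*\neq 0$, neither $\sO(-2L)$ nor $\sF(-L)$ lies there either. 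The Koszul-type sequences you invoke are exactly (\ref{ex1})--(\ref{ex3}) of Lemma \ref{lem:long}, and each of them carries an unavoidable $\ft{S}^2\sF$ or $\ft{S}^3\sF$ term, so they cannot close the argument on their own.

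The missing idea is the component $\iota_*\sC_1(-2)$ of $\sC^{\perp}$, consisting of objects supported on the exceptional divisor $E_f\sim 2(H-L)$. The paper's Lemma \ref{lem:exactseq} manufactures exact sequences whose cokernels are invertible sheaves on $E_f$ lying in $\iota_*\sC_1(-2)$: for line bundles this comes from twisting $\sO_{\hchow}(-E_f)\hookrightarrow\sO_{\hchow}$, while for $\sF$ and $T_{\hchow/\rG(2,V)}$ it requires a relative spinor-sheaf (Clifford multiplication) construction, e.g.\ $0\to\sF(-H-kL)\to\sF(-(k+1)L)\to\sO_{E_f}(H_{\mP(\sF)}-(k+2)L)\to0$. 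Corollary \ref{cor:key} then places precisely the problematic sheaves $\sO(-iH-2L)$, $\sF(-iH-L)$, $\sF(-iH-2L)$, $T_{\hchow/\rG(2,V)}(-iH)$ and $T_{\hchow/\rG(2,V)}(-iH-L)$ in $\sC^{\perp}$, and only with these in hand do the Grassmannian resolutions of Lemma \ref{lem:long} and the Euler sequence suffice to finish the $\sQ=\sF$ and $T$-cases. Without the $E_f$-sequences your bookkeeping cannot be made to close.
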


To show Lemma \ref{lem:PQ},
we prepare the following three results:

\begin{lem}
\label{lem:exactseq}
We denote by $H_{\mP(\sF)}$ the tautological divisor of $E_f\simeq \mP(\sF)$.
For any $k\in \mZ$, there exist the following exact sequences\,$:$
\begin{align}
0\to \sO_{\hchow}(-2H-kL)\to \sO_{\hchow}(-(2+k)L)\to
\sO_{E_f}(-(2+k)L)\to 0,\label{exa3}\\
0\to \sF(-H-kL)\to \sF(-(k+1)L)\to
\sO_{E_f}(H_{\mP(\sF)}-(k+2)L)\to 0,\label{exa1}\\
0\to T_{\hchow/\mathrm{G}(2,V)}(-2H-kL)\to\qquad \qquad \qquad\qquad \qquad\qquad\qquad\label{exanew}\\
T_{\hchow/\mathrm{G}(2,V)}(-H-(k+1)L)\to\sO_{E_f}(2H_{\mP(\sF)}-(k+3)L) \to 0.\nonumber\\
0\to \sO_{\hchow}(-H)\to {\ft S}^2 \sF\to
T_{\hchow/\mathrm{G}(2,V)}(-H)\to 0,\label{exa0}\\
0\to T_{\hchow/\mathrm{G}(2,V)}(-H)
\to {\ft S}^2 \sF(H-L)\to
\sO_{\hchow}(2H-3L)\to 0,\label{exa1'}
\end{align}
\end{lem}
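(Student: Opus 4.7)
The key geometric input is the identification of the exceptional divisor $E_f\subset\hchow$ and its divisor class. Since $\hchow=\mP(\ft{S}^2\sF)$ and the rank-one locus of quadratic forms in each fiber is the Veronese conic, $E_f$ is the Veronese subbundle $\mP(\sF)\hookrightarrow\mP(\ft{S}^2\sF)$. Combining $K_{\hchow/\rG(2,V)}=-3H+3L$ (which follows from $\det(\ft{S}^2\sF)=(\det\sF)^{\otimes 3}=\sO(-3L)$), $K_{\rG(2,V)}=-(n+1)L$, and $K_{\hchow}=-(n+1)H+E_f$ from \cite[\S 2.1]{Geom}, adjunction forces $E_f\sim 2H-2L$. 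In particular $\sO(H)|_{E_f}=\sO(2H_{\mP(\sF)})$, and the defining section of $E_f$ is (up to scalar) the discriminant $\det\varphi^\sharp\in H^0(\sO(2H-2L))$ of the symmetric bundle map $\varphi^\sharp\colon\sF^*\to\sF(H)$ coming from the universal quadric $\sO(-H)\hookrightarrow g^*\ft{S}^2\sF$.

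The first three sequences are then straightforward. Sequence $(\ref{exa0})$ is the relative Euler sequence of the $\mP^2$-bundle $g\colon\hchow\to\rG(2,V)$. Sequence $(\ref{exa3})$ is obtained by tensoring the structure sequence $0\to\sO(-E_f)\to\sO_{\hchow}\to\sO_{E_f}\to 0$ with $\sO(-(k+2)L)$, using $-E_f-(k+2)L=-2H-kL$. For $(\ref{exa1'})$, dualize $(\ref{exa0})$ to get $0\to T^*(H)\to\ft{S}^2\sF^*\to\sO(H)\to 0$, then apply the rank-two identities $\sF^*\cong\sF(L)$ and $T^*\cong T(-3H+3L)$ (using $\det T_{\hchow/\rG(2,V)}=3H-3L$), and twist by $\sO(H-3L)$.

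For $(\ref{exa1})$, I would use the map $\varphi^\sharp$ itself: rewriting it via $\sF^*\cong\sF(L)$ as $\sF\to\sF(H-L)$ and twisting by $\sO(-H-kL)$ yields the desired arrow $\sF(-H-kL)\to\sF(-(k+1)L)$. Since $\det\varphi^\sharp$ vanishes precisely on $E_f$, this map is injective with cokernel supported on $E_f$; a fiberwise check at $([V_2],[v])\in E_f$ shows that the image of $\varphi^\sharp|_{E_f}$ is the subsheaf $\sO(H_{\mP(\sF)})\subset\sF(H)|_{E_f}$, so the untwisted cokernel is $\sO_{E_f}(3H_{\mP(\sF)}-L)$, and after twisting one obtains the stated $\sO_{E_f}(H_{\mP(\sF)}-(k+2)L)$.

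The sequence $(\ref{exanew})$ is the most delicate: Borel--Weil--Bott gives $H^0(\hchow,\sO(H-L))=0$, so no scalar multiplication directly connects the relevant twists of $T_{\hchow/\rG(2,V)}$. My plan is to define $\alpha\colon\ft{S}^2\sF\to\ft{S}^2\sF(H-L)$ as the composition $\ft{S}^2\sF\twoheadrightarrow T(-H)\hookrightarrow\ft{S}^2\sF(H-L)$ arising from $(\ref{exa0})$ and $(\ref{exa1'})$, and to apply the snake lemma to the commutative diagram whose rows are $(\ref{exa0})$ and $(\ref{exa0})$ twisted by $\sO(H-L)$, with left vertical map $0\colon\sO(-H)\to\sO(-L)$ and middle vertical map $\alpha$. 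The induced map $\beta\colon T(-H)\to T(-L)$ on quotients is injective, and the snake connecting map $\delta\colon\sO(-L)\to\sO(2H-3L)$ is a section of $\sO(E_f)$; identifying $\delta$ (up to scalar) with $\det\varphi^\sharp$ yields $\mathrm{coker}\,\beta=\sO_{E_f}(2H-3L)$, and twisting by $\sO(-H-kL)$ together with $\sO(H)|_{E_f}=\sO(2H_{\mP(\sF)})$ gives exactly $(\ref{exanew})$. The hard part is this last step: one must both track all the twists through the snake lemma and verify that $\delta\neq 0$, which amounts to checking that the natural composition $\sO\hookrightarrow\ft{S}^2\sF(H)\twoheadrightarrow\sO(2H-2L)$ is the discriminant (e.g.\ by comparing with the canonical $\sO$-summand of $g_*\sO(E_f)$ or by a direct fiberwise computation).
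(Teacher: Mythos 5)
Your proposal is correct, and for (\ref{exa3}), (\ref{exa0}), (\ref{exa1'}) it coincides with the paper's proof (structure sequence of $E_f$ twisted; relative Euler sequence; dualize and twist using $\sF^*\simeq\sF(L)$). For the two nontrivial sequences (\ref{exa1}) and (\ref{exanew}) you take a genuinely different route. The paper builds the maps representation-theoretically: it takes the Littlewood--Richardson projections $\ft{S}^2\sF\otimes\sF\to\sF(-L)$ and $\ft{S}^2\sF\otimes\ft{S}^2\sF\to\ft{S}^2\sF(-L)$, identifies the induced maps $\sO(-H)\otimes\sF\to\sF(-L)$ and $\sO(-H)\otimes T(-H)\to T(-H-L)$ as Clifford multiplications for the conic fibration $E_f\to\rG(2,V)$, invokes Kapranov's spinor-sheaf construction to see that the cokernels are line bundles on $E_f$ of the shape $\sO_{E_f}(H_{\mP(\sF)}+aL)$, resp.\ $\sO_{E_f}(2H_{\mP(\sF)}+bL)$, and pins down $a=-2$, $b=-3$ by $\SL(V)$-equivariance. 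You instead use the universal symmetric bundle map $\varphi^\sharp\colon\sF^*\to\sF(H)$ and compute its cokernel directly from the tautological sequence on $E_f=\mP(\sF)$ (giving (\ref{exa1})), and obtain (\ref{exanew}) from the snake lemma applied to (\ref{exa0}) versus its twist by $H-L$. Your diagram does commute (since $\alpha$ kills $\sO(-H)$), and the six-term sequence reads $0\to\sO(-H)\to\ker\alpha\to\ker\beta\to\sO(-L)\xrightarrow{\ \delta\ }\sO(2H-3L)\to\mathrm{coker}\,\beta\to0$ with $\ker\alpha=\sO(-H)$ and $\mathrm{coker}\,\alpha=\sO(2H-3L)$; once $\delta\neq0$ you get $\ker\beta=0$ and $\mathrm{coker}\,\beta=\sO(2H-3L)|_{\divi(\delta)}$, and $\divi(\delta)=E_f$ either because $\delta$ is the discriminant or simply because $h^0(\sO(2H-2L))=1$ by Bott, so the twists come out as stated. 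Your approach trades the paper's explicit equivariant constructions for two routine but necessary verifications, which is a reasonable exchange.

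Two points to tighten. First, $\delta$ is not the connecting homomorphism of the snake lemma (that would be a map $\ker\beta\to\sO(-L)$); it is the induced map $\mathrm{coker}(0)\to\mathrm{coker}\,\alpha$ in the six-term sequence, namely the composite $\sO(-L)\hookrightarrow\ft{S}^2\sF(H-L)\twoheadrightarrow\sO(2H-3L)$ --- this is exactly the section you need, but the argument should quote the right arrow. Second, in both (\ref{exa1}) and (\ref{exanew}) the passage from ``cokernel supported on $E_f$ with the expected fiberwise image'' to ``cokernel is a line bundle on the \emph{reduced} $E_f$'' needs a word: since $\varphi^\sharp$ (resp.\ $\beta$) has corank exactly one along $E_f$, the cokernel is locally cyclic, and its zeroth Fitting ideal is $(\det\varphi^\sharp)=I_{E_f}$ because $\divi(\det\varphi^\sharp)$ is an effective divisor in $|2H-2L|=|E_f|$ containing $E_f$, hence equal to it. With that, $\mathrm{coker}\,\varphi^\sharp$ is an invertible $\sO_{E_f}$-module and agrees with the cokernel of the restricted map, which your fiberwise computation identifies as $\sO_{E_f}(3H_{\mP(\sF)}-L)$ before twisting; the final twists you record match (\ref{exa1}) and (\ref{exanew}).
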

\begin{proof}
Noting that
\begin{equation}
\label{eq:Ef}
E_f\sim 2(H-L) \quad \text{(\cite[\S 2.1]{Geom})},
\end{equation}
(\ref{exa3}) is obtained from the standard exact sequence
$0\to \sO_{\hchow}(-E_f)\to \sO_{\hchow}\to
\sO_{E_f}\to 0$ by tensoring $\sO_{\hchow}(-(k+2)L)$.
(\ref{exa0}) is nothing but the Euler sequence.
(\ref{exa1'}) is obtained by dualizing and twisting
(\ref{exa0}).

We will construct the exact sequence (\ref{exa1}). Twisting
$\sO_{\hchow_n}(kL)$, it suffices to show the existence of
the exact sequence
\begin{equation}
\label{eq:-H-L}
0\to \sF(-H)\to \sF(-L)\to
\sO_{E_f}(H_{\mP(\sF)}-2L)\to 0,
\end{equation}
where we note that $\sO_{\hchow_n}(H)|_{E_f}\simeq \sO_{E_f}(2H_{\mP(\sF)})$.
The construction below is nothing but a relativization of 
the Kapranov's construction of the spinor sheave $\sO_{\mP^1}(1)$ on a plane conic.
By the Littlewood-Richardson rule,
we have ${\ft S}^2 \sF\otimes \sF\simeq {\ft S}^3 \sF\oplus \sF(-1)$ on $\rG(2,V)$.
Therefore we have a map 
$p \colon {\ft S}^2 \sF\otimes \sF\to \sF(-1)$ by the projection to the second factor. 
Let $[V_2]\in \rG(2,V)$ be any point.
It is convenient to identify ${\ft S}^k V_2$ with the spaces of 
binary $k$-forms with variables $x$, $y$.
Then the map $p$
coincides up to constant with the $\SL(V_2)$-equivariant map
$\delta\colon {\ft S}^2 V_2\otimes V_2\to V_2$
such that
\[
\delta(f,g)=\frac{\partial f}{\partial x} \frac{\partial g}{\partial y}-
\frac{\partial f}{\partial y} \frac{\partial g}{\partial x}
\]
for $f\in {\ft S}^2 V_2$ and $g\in V_2$. 
Computing explicitly,
we see that
the composite
\[
{\ft S}^2 V_2\otimes{\ft S}^2 V_2\otimes V_2\overset{1\otimes \delta}{\to} 
{\ft S}^2 V_2\otimes V_2\overset{\delta}{\to} V_2
\]
satisfies
$(f,f,g)\mapsto (b^2-4ac)g$, where $f=ax^2+bxy+cy^2$.
Since ${\ft S}^2 ({\ft S}^2 V_2)\simeq {\ft S}^4 V_2\oplus \mC$ as an $\SL(V_2)$-module, 
the map ${\ft S}^2 V_2\otimes{\ft S}^2 V_2\to \mC$ satisfying
$(f,f)\mapsto b^2-4ac$ corresponding to
the projection ${\ft S}^2 ({\ft S}^2 V_2)\to \mC$
to the second factor.
Therefore
the composite 
\[
{\ft S}^2 \sF \otimes{\ft S}^2 \sF \otimes \sF \overset{1\otimes p}{\to} 
{\ft S}^2 \sF\otimes \sF(-1)\overset{p}{\to} \sF(-2)
\]
coincides with the map
${\ft S}^2 \sF \otimes{\ft S}^2 \sF \otimes \sF \to
\sO_{\rG(2,V)}(-2) \otimes \sF$
induced from
the projection 
\begin{equation}
\label{eq:projS2}
{\ft S}^2 \sF \otimes{\ft S}^2 \sF\to
{\ft S}^2 ({\ft S}^2 \sF)\simeq {\ft S}^4 \sF\oplus \sO_{\rG(2,V)}(-2)
\to
\sO_{\rG(2,V)}(-2).
\end{equation}
Since the map 
${\ft S}^2 ({\ft S}^2 \sF) \to
\sO_{\rG(2,V)}(-2)$ corresponds to
the conic fibration $E_f\to \rG(2,V)$,
we see that $p$ is the Clifford multiplication associated to 
$E_f\to \rG(2,V)$.
Therefore, by the construction of the spinor sheaf $\sO_{\mP^1}(1)$ on a plane conic, we see that
the map on $\hchow$
\[
\sO_{\hchow}(-H)\otimes \sF\hookrightarrow
{\ft S}^2 \sF \otimes \sF\overset{p}{\to}\sF(-L)
\]
is injective and the cokernel $\sL$ is an invertible sheave 
on $E_f$ of the form $\sO_{E_f}(H_{\mP(\sF)}+aL)$ with some $a\in \mZ$.
Since the induced map $\sF(-L)\to \sL$ is $\SL(V)$-equivariant,
we see that $a=-2$. Therefore we have obtained the desired exact sequence
$(\ref{eq:-H-L})$.

The construction of (\ref{exanew}) is similar to that of (\ref{exa1}),
so we only give a sketch below.
Twisting
$\sO_{\hchow_n}(kL)$, it suffices to show the existence of
the exact sequence
\begin{equation}
\label{eq:-H-L'}
0\to T_{\hchow/\mathrm{G}(2,V)}(-2H)\to
T_{\hchow/\mathrm{G}(2,V)}(-H-L)\to\sO_{E_f}(2H_{\mP(\sF)}-3L) \to 0.
\end{equation}
By the Littlewood-Richardson rule,
we have ${\ft S}^2 \sF\otimes {\ft S}^2 \sF\simeq {\ft S}^4 \sF\oplus {\ft S}^2 \sF (-1)\oplus \sO_{\rG(2,V)}(-2)$ on $\rG(2,V)$.
Therefore we have a map 
$q \colon {\ft S}^2 \sF\otimes {\ft S}^2 \sF\to {\ft S}^2 \sF(-1)$ by the projection to the middle factor. 
We see that this map is locally defined as the map $\delta$ above.
Now we consider the composite of 
the maps on $\hchow$
\[
\sO_{\hchow}(-H)\otimes {\ft S}^2 \sF\hookrightarrow
{\ft S}^2 \sF \otimes {\ft S}^2 \sF\overset{q}{\to}{\ft S}^2 \sF(-L).
\]
Then, by local computations, we see that 
the restriction of this map to 
$\sO_{\hchow}(-H)\otimes \sO_{\hchow}(-H)$ is zero.
Therefore, noting ${\ft S}^2 \sF/\sO_{\hchow}(-H)\simeq T_{\hchow/\rG(2,V)}(-H)$,
we have the following map:
\[
\overline{q}\colon \sO_{\hchow}(-H)\otimes T_{\hchow/\rG(2,V)}(-H) 
\to {\ft S}^2 \sF(-L)\to T_{\hchow/\rG(2,V)}(-H-L).
\]
We consider
the composite
\begin{align}
\label{align:-HT}
\sO_{\hchow}(-H) \otimes \sO_{\hchow}(-H) \otimes T_{\hchow/\rG(2,V)}(-H) \overset{1\otimes \overline{q}}{\longrightarrow} \qquad\qquad\qquad \\
\sO_{\hchow}(-H) \otimes T_{\hchow/\rG(2,V)}(-H-L)\overset{\overline{q}}{\to} 
T_{\hchow/\rG(2,V)}(-H-2L).\nonumber
\end{align}
Note that, by (\ref{eq:Ef}), 
we obtain a map
\begin{equation}
\label{eq:-2H-2L}
\sO_{\hchow}(-2H)\to \sO_{\hchow}(-2L)
\end{equation}
twisting the natural map 
$\sO_{\hchow}(-E_f)\hookrightarrow \sO_{\hchow}$ with $\sO_{\hchow}(-2L)$.
As in the case of (\ref{exanew}), we see by local computations that
(\ref{align:-HT}) coincides with the map
$(\ref{eq:-2H-2L})\otimes T_{\hchow/\rG(2,V)}(-H)$.
Therefore, $\overline{q}$ is a Clifford multiplication.
Again, by local computations, we see that $\overline{q}$ is
injective and the cokernel $\sM$ is an invertible sheave 
on $E_f$ of the form $\sO_{E_f}(2H_{\mP(\sF)}+bL)$ with some $b\in \mZ$.
Since the induced map $T_{\hchow/\rG(2,V)}(-H-L)\to \sM$ is $\SL(V)$-equivariant,
we see that $b=-3$. Therefore we have obtained the desired exact sequence
$(\ref{eq:-H-L'})$.

\end{proof}

\begin{cor}
\label{cor:key}
For $n=3,4$, the following sheaves are contained in $\sC^{\perp}$\,$:$
\begin{enumerate}[$(1)$]
\item
$\sO_{\hchow_n}(-iH-2L)$ 
for $n=3,4$ and $-1\leq i\leq n-1$.
\item
$\sO_{\hchow_4}(-iH-3L)$ 
for $n=4$ and $-2\leq i\leq 2$.
\item
$\sF(-iH-L)$ 
for $n=3,4$ and $-1\leq i\leq n-1$.
\item
$\sF(-iH-2L)$ 
for $n=4$ and $-2\leq i\leq 2$.
\item
$T_{\hchow_n/\rG(2,V)}(-iH)$ 
for $n=3$ and $i=0,1$, and $n=4$ and $0\leq i\leq 4$.
\item
$T_{\hchow_4/\rG(2,V)}(-iH-L)$ 
for $n=4$ and $-1\leq i\leq 3$.

\end{enumerate}
\end{cor}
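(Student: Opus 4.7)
The plan is to show that every sheaf listed in (1)--(6) lies in the subcategory
$$\sM:=\langle \iota_*\sC_1(-2),\sA_n(-n),\dots,\sA_1(-1),\sA_0\rangle \subseteq \sC^{\perp}$$
by realizing it as one vertex of a distinguished triangle whose other two vertices are already known to lie in $\sM$. The short exact sequences of Lemma \ref{lem:exactseq} are exactly the tool for this: each is of the form $0\to\sG_1\to\sG_2\to\iota_*\sH\to 0$ with $\sG_1,\sG_2$ natural sheaves on $\hchow$ and $\iota_*\sH$ a sheaf supported on $E_f$, so that once any two of $\sG_1,\sG_2,\iota_*\sH$ are in $\sM$, so is the third.

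First I would establish a reduction lemma: the $E_f$-supported sheaves $\iota_*\sO_{E_f}(aH_{\mP(\sF)}+bL|_{E_f})$ arising on the right of \eqref{exa3}, \eqref{exa1}, and \eqref{exanew} belong to $\iota_*\sC_1(-2)$ for the values $(a,b)$ forced by the ranges in the corollary. To check this one restricts to a fiber of $E_f\to\nS_1^*\simeq\mP(V)$, which is a $\mP^{n-1}$ parametrising lines through a fixed point. On such a fiber $H_{\mP(\sF)}$ restricts to the hyperplane class while $L|_{E_f}$ supplies the Beilinson twist, so these sheaves fall into the fiberwise Beilinson decomposition pulled back by $f|_{E_f}$ and twisted by the permitted powers of $L|_{E_f}$. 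This is exactly the generating set of $\iota_*\sC_1(-2)$.

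Next I would record that the pieces of $\sA_0,\dots,\sA_n$ contribute explicit sheaves $\sO_{\hchow}(-iH)$, $\sO_{\hchow}(-iH-L)$, $\sF(-iH)$ and (in the appropriate ranges) $T_{\hchow/\rG(2,V)}(-iH+L)$ to $\sM$; these are the base cases. I would then process (1)--(6) in order, applying the corresponding sequence of Lemma \ref{lem:exactseq}: (1) and (2) follow from \eqref{exa3}; (3) and (4) from \eqref{exa1}; (5) follows from the Euler sequence \eqref{exa0} together with its twisted dual \eqref{exa1'}, which let us swap between $T_{\hchow/\rG(2,V)}(-H)$, $\sO_{\hchow}(-H)$ and $\ft{S}^2\sF$; and (6) follows from \eqref{exanew} once (5) is in hand. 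When the ``simpler'' term of an exact sequence is not yet in the base set, I would have it in the bag from an earlier iteration, so the six verifications must be carried out in the right order.

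The main obstacle will be the bookkeeping of the cascade of extensions. The sequences of Lemma \ref{lem:exactseq} fix the $H$-coefficient of their left-hand term (for instance \eqref{exa3} always produces $-2H$), so recovering the full ranges $-1\leq i\leq n-1$ or $-2\leq i\leq 2$ requires chaining several sequences, and occasionally combining \eqref{exa3} with \eqref{exa1} (for $n=4$) in order to shift the $L$-coefficient independently of the $H$-coefficient. The narrow cutoff on the permitted Beilinson twists inside $\iota_*\sC_1(-2)$ (only $-2L$ and, for $n=4$, also $-3L$) has to be respected at every step, and in fact it is precisely this cutoff that dictates the specific ranges of $i$ in (1)--(6).
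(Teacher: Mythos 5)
Your overall strategy coincides with the paper's: each listed sheaf is exhibited as the middle term of one of the short exact sequences of Lemma \ref{lem:exactseq}, twisted by a suitable power of $H$, whose left term lies in the collection \eqref{eqn:start}/\eqref{eqn:start2} (or in an earlier item of the corollary) and whose right term is an $E_f$-supported line bundle belonging to $\iota_*\sC_1(-2)$. Your assignments for (1)--(2) (via \eqref{exa3}), (3)--(4) (via \eqref{exa1} with $k=0,1$ --- exactly the two cases the paper writes out), and (6) (via \eqref{exanew} once (5) is known) are the intended ones.

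The gap is in item (5). You propose to obtain $T_{\hchow/\rG(2,V)}(-iH)\in\sC^{\perp}$ from the Euler sequence \eqref{exa0} and its twisted dual \eqref{exa1'}, ``swapping'' between $T_{\hchow/\rG(2,V)}(-H)$, $\sO_{\hchow}(-H)$ and $\ft{S}^2\sF$. But the twists of $\ft{S}^2\sF$ are not among the generators of \eqref{eqn:start}/\eqref{eqn:start2}, and in the paper's logic the membership $\ft{S}^2\sF(-iH)\in\sC^{\perp}$ is \emph{deduced from} item (5) together with \eqref{exa0} (see the case $\sQ=\sF$, $\sP=\sF(-iH)$ in the proof of Lemma \ref{lem:PQ}); so this route is circular. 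Nor does \eqref{exa1'} rescue it: its third term $\sO_{\hchow_3}(2H-3L)$ is not available at this stage for $n=3$, and producing $\ft{S}^2\sF$-twists from Lemma \ref{lem:long} drags in $\sO_{\hchow_n}$, $\sO_{\hchow_n}(L)$, $\sF^*(L)$, none of which is in the base list. The sequence designed for (5) is \eqref{exanew} with $k=-1$: twisting by $-(i-1)H$ gives
\[
0\to T_{\hchow/\rG(2,V)}(-(i+1)H+L)\to T_{\hchow/\rG(2,V)}(-iH)\to \sO_{E_f}((4-2i)H_{\mP(\sF)}-2L)\to 0,
\]
whose left term is a generator of \eqref{eqn:start} precisely for $i=0,1$ ($n=3$) and of \eqref{eqn:start2} precisely for $0\le i\le 4$ ($n=4$), which is exactly the claimed range. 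A secondary slip: on a fiber of $E_f\simeq\mP(\sF)\to\mP(V)$ it is $L$, not $H_{\mP(\sF)}$, that restricts to the hyperplane class; $H_{\mP(\sF)}$ is the pull-back of $\sO_{\mP(V)}(1)$ under $f|_{E_f}$ and restricts trivially. This is what makes $\sO_{E_f}(aH_{\mP(\sF)}-2L)=(f|_{E_f})^*\sO_{\mP(V)}(a)\otimes\sO_{E_f}(-2L)$ an object of $\iota_*\sC_1(-2)$ (and $\sO_{E_f}(aH_{\mP(\sF)}-3L)$ one for $n=4$); with the two classes read the other way around, the admissible twists $(a,b)$ would come out wrong.
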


\begin{proof}
All the assertions can be proved in a similar way by using 
the exact sequences in Lemma \ref{lem:exactseq}.
Thus it should suffice to prove some of them.

To show (3),
we consider $(\ref{exa1}) \otimes \sO_{\hchow_3}(-iH)$ with $k=0$:
\[
0\to \sF(-(i+1)H)\to \sF(-iH-L)\to
\sO_{E_f}((-2i+1)H_{\mP(\sF)}-2L)\to 0.
\]
Since
$\sO_{E_f}((-2i+1)H_{\mP(\sF)}-2L)\in \iota_*\sC(-2)\subset \sC^{\perp}$,
and $\sF(-(i+1)H)\in \sC^{\perp}$ for 
$0\leq i+1\leq n$ by (\ref{eqn:start}),
we have 
$\sF(-iH-L)\in \sC^{\perp}$ for $-1\leq i\leq n-1$.

To show (4),
we consider $(\ref{exa1}) \otimes \sO_{\hchow_3}(-iH)$ with $k=1$:
\[
0\to \sF(-(i+1)H-L)\to \sF(-iH-2L)\to
\sO_{E_f}((-2i+1)H_{\mP(\sF)}-3L)\to 0.
\]
Since
$\sO_{E_f}((-2i+1)H_{\mP(\sF)}-3L)\in \iota_*\sC(-2)\subset \sC^{\perp}$ for 
$n=4$,
and $\sF(-(i+1)H-L)\in \sC^{\perp}$ for 
$-1\leq i+1\leq 3$ by (3) as we have proved,
we have
$\sF(-iH-2L)\in \sC^{\perp}$
for
$-2\leq i\leq 2$.
\end{proof}

\begin{lem}
\label{lem:long}
\begin{enumerate}[$(1)$]
\item 
For $n=3$,
there exist the following exact sequences\,$:$
\begin{align}
0\to 
{\ft S}^2 \sF\to 
V\otimes \sF\to \wedge^2 V\otimes \sO_{\hchow_3}\to
\sO_{\hchow_3}(L) \to 0,\label{ex1}\qquad\qquad\qquad\\
\ \ 0\to
\sO_{\hchow_3}(-3L)
\to
\wedge^2 V^* \otimes \sO_{\hchow_3}(-2L) 
\to \label{ex2}
V^*\otimes \sF(-L)\to
{\ft S}^2 \sF \to 0.
\end{align}
\item For $n=4$,
there exist the following exact sequences\,$:$
\begin{align}
0\to
{\ft S}^2 \sF
\to
V \otimes \sF \to 
\wedge^2 V\otimes \sO_{\hchow_4}\to\qquad\qquad\label{ex4}\\
V^*\otimes \sO_{\hchow_4}(L)
\to
\sF^*(L)\to 0\nonumber\\
0\to \sO_{\hchow_4}(-4L)\to 
\wedge^3 V^*\otimes \sO_{\hchow_4}(-3L)\to\qquad\qquad\label{ex3}\\
\wedge^2 V^*\otimes \sF(-2L)
\to
V^*\otimes {\ft S}^2 \sF(-L)\to
{\ft S}^3 \sF\to 0.\nonumber
\end{align}
\end{enumerate}
\end{lem}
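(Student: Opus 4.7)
The plan is to derive all four sequences---which I label (a), (b), (c), (d) in the order they appear in Lemma \ref{lem:long}---from a single ``symmetric Koszul'' complex on $\rG(2,V)$. For any locally split inclusion $\sF\hookrightarrow V\otimes\sO$ of vector bundles with $\sF$ of rank $2$ and quotient $\sQ=V/\sF$, I claim that the complex
\[
0\to {\ft S}^{k}\sF\to V\otimes{\ft S}^{k-1}\sF\to \wedge^{2}V\otimes{\ft S}^{k-2}\sF\to \dots\to \wedge^{k}V\otimes\sO\to \wedge^{k}\sQ\to 0,
\]
with differentials induced by $\sF\hookrightarrow V$, is exact for every $k\ge 0$. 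Since exactness is a local question and a subbundle is locally split, after writing $V=\sF\oplus C$ locally the complex decomposes term-by-term into summands $\wedge^{a}\sF\otimes\wedge^{b}C\otimes{\ft S}^{c}\sF$, and exactness reduces to a standard multilinear calculation (using $\wedge^{a}\sF=0$ for $a>2$).

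Granted this complex, the four sequences are built as follows. For (a), take $k=2$ on $\rG(2,V)$ with $\dim V=4$: then $\sQ$ has rank $2$, so $\wedge^{2}\sQ=\det\sQ=(\det\sF)^{-1}=\sO(L)$, and the complex produces (a) directly. For (c), take $k=2$ with $\dim V=5$: now $\wedge^{2}\sQ=\sQ^{*}\otimes\det\sQ=\sQ^{*}(L)$, and splicing with the short exact sequence
\[
0\to \sQ^{*}(L)\to V^{*}\otimes\sO(L)\to \sF^{*}(L)\to 0,
\]
coming from the dual Euler sequence twisted by $\sO(L)$, yields (c).

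For (b), dualize (a) and tensor with $\sO(-2L)$; the rank-$2$ identifications $\sF^{*}\simeq\sF\otimes\sO(L)$ and ${\ft S}^{2}\sF^{*}\simeq{\ft S}^{2}\sF\otimes\sO(2L)$ (valid because $\det\sF=\sO(-L)$) convert the result into the stated form. For (d), take $k=3$ with $\dim V=5$: since $\wedge^{3}\sQ=\det\sQ=\sO(L)$, the complex reads
\[
0\to {\ft S}^{3}\sF\to V\otimes{\ft S}^{2}\sF\to \wedge^{2}V\otimes\sF\to \wedge^{3}V\otimes\sO\to \sO(L)\to 0.
\]
Dualizing, tensoring with $\sO(-3L)$, and applying ${\ft S}^{k}\sF^{*}\simeq{\ft S}^{k}\sF\otimes\sO(kL)$ for $k=1,2,3$ produces (d). Since every sheaf appearing in (a)--(d) is a pullback from $\rG(2,V)$ along the flat projective bundle $g\colon\hchow_{n}\to\rG(2,V)$, exactness on $\rG(2,V)$ transfers to $\hchow_{n}$.

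The only substantive point is the exactness of the symmetric Koszul complex; for the two relevant cases $k=2$ and $k=3$ this is a routine if slightly tedious multilinear bookkeeping. Everything else---the construction of (b), (c), and (d) from the basic complex---is formal dualization and twisting together with rank-$2$ identities, plus the single splicing step for (c).
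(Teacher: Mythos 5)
Your proposal is correct, and it is essentially the argument the paper points to: the paper's proof is just a citation to Kuznetsov's Lemma~4.3 in \cite{KuzGrIsotropic}, whose derivation of these sequences is exactly the Koszul complex $0\to \ft{S}^k\sF\to V\otimes\ft{S}^{k-1}\sF\to\cdots\to\wedge^kV\otimes\sO\to\wedge^k\sQ\to0$ attached to the tautological sequence on $\rG(2,V)$, followed by the same dualizations, twists by powers of $L$ using $\det\sF=\sO(-L)$, and (for (\ref{ex4})) the splicing with the dual tautological sequence. Your local verification via the splitting $V=\sF\oplus C$ and the acyclicity of the classical Koszul complex is sound, and pulling back along the projective bundle $g\colon\hchow_n\to\rG(2,V)$ indeed preserves exactness, so the write-up is a complete, self-contained version of the cited proof.
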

\begin{proof}
See \cite[the proof of Lem.~4.3]{KuzGrIsotropic}.
\end{proof}

\begin{proof}[{\bf Proof of Lemma \ref{lem:PQ}}]
First we assume that $n=3$.

\vspace{5pt}

\noindent {\bf Case $\sQ=\sO_{\hchow_3}$.} The assertion holds since
$\sP\in\sC^{\perp}$.

\vspace{5pt}

\noindent {\bf Case $\sQ=\sO_{\hchow_3}(-L)$.}
If $\sP=\sO_{\hchow_3}(-iH)\, (1\leq i\leq 3)$, then 
$\sP(-L)=\sO_{\hchow_3}(-iH-L)\in \sC^{\perp}$ by (\ref{eqn:start}).
If $\sP=\sO_{\hchow_3}(-iH-L)$ $(0\leq i\leq 2)$, then
$\sP(-L)=\sO_{\hchow_3}(-iH-2L)\in \sC^{\perp}$ by
Corollary \ref{cor:key} (1).
If $\sP=\sF(-iH)$
$(0\leq i\leq 2)$, then
$\sP(-L)=\sF(-iH-L)\in \sC^{\perp}$ by
Corollary \ref{cor:key} (3).

\vspace{5pt}

\noindent {\bf Case $\sQ=\sF$.}
If $\sP=\sO_{\hchow_3}(-iH)$ ($1\leq i\leq 3$), then
$\sF\otimes \sP=\sF(-iH)\in \sC^{\perp}$ by (\ref{eqn:start}).
If $\sP=\sO_{\chow_3}(-iH-L)$ $(0\leq i\leq 2)$, then 
$\sF\otimes \sP\simeq \sF(-iH-L)\in \sC^{\perp}$ by
Corollary \ref{cor:key} (3).
Finally we assume that $\sP=\sF(-iH)$ $(0\leq i\leq 2)$.
We note that $\sF\otimes \sF(-iH)\simeq \sO_{\hchow_3}(-iH-L)\oplus
{\ft S}^2 \sF(-iH)$.
Since $\sO_{\hchow_3}(-iH-L)\in\sC^{\perp}$ by (\ref{eqn:start}),
it remains to show ${\ft S}^2 \sF(-iH)\in \sC^{\perp}$.
For ${\ft S}^2 \sF$, it suffices to show
that
$\sO_{\hchow_3}(-H),\, T_{\hchow/\mathrm{G}(2,V)}(-H)\in \sC^{\perp}$ 
by $(\ref{exa0})$.
By (\ref{eqn:start}), $\sO_{\hchow_3}(-H)\in \sC^{\perp}$.
Moreover, by 
Corollary \ref{cor:key} (5), we see that 
$T_{\hchow/\mathrm{G}(2,V)}(-H)\in \sC^{\perp}$.
The argument for ${\ft S}^2 \sF(-iH)$ ($i=1,2$) is slightly
involved. We use also Lemma \ref{lem:long}.
By considering $(\ref{ex1}) \otimes \sO_{\hchow_3}(-iH)$, 
we are reduced to show  
$\sF(-iH),\,
\sO_{\hchow_3}(-iH),\,
\sO_{\hchow_3}(-iH+L)\in \sC^{\perp}$.
By (\ref{eqn:start}),
$\sF(-iH), \sO_{\hchow_3}(-iH)\in \sC^{\perp}$.
For $\sO_{\hchow_3}(-iH+L)$,
we consider $(\ref{exa0})\otimes \sO_{\hchow_3}(-(i-1)H+L)$.
Then
we are reduce to show that ${\ft S}^2 \sF(-(i-1)H+L)\in \sC^{\perp}$
since $T_{\hchow/\mathrm{G}(2,V)}(-iH+L)\in \sC^{\perp}$ by (\ref{eqn:start}).
For  ${\ft S}^2 \sF(-(i-1)H+L)$, we consider
$(\ref{ex2})\otimes \sO_{\hchow_3}(-(i-1)H+L)$.
Then we are reduced to show that
$\sO_{\hchow_3}(-(i-1)H-2L),\, \sO_{\hchow_3}(-(i-1)H-L),\,
\sF(-(i-1)H)\in \sC^{\perp}$.
The latter two sheaves is contained in $\sC^{\perp}$ by 
(\ref{eqn:start}), and  
$\sO_{\hchow_3}(-(i-1)H-2L)\in \sC^{\perp}$
by Corollary \ref{cor:key} (1).

\vspace{5pt}

Next we consider the case $n=4$.
\vspace{5pt}

\noindent {\bf Case $\sQ=\sO_{\hchow_4}$.} The assertion holds since
$\sP\in\sC^{\perp}$.

\vspace{5pt}

\noindent {\bf Case $\sQ=\sO_{\hchow_4}(-L)$.}
If $\sP=\sO_{\hchow_4}(-iH)\, (1\leq i\leq 4)$, then 
$\sP(-L)=\sO_{\hchow_4}(-iH-L)\in \sC^{\perp}$ by (\ref{eqn:start2}).
If $\sP=\sO_{\hchow_4}(-iH-L)$ $(0\leq i\leq 3)$, then
$\sP(-L)=\sO_{\hchow_4}(-iH-2L)\in \sC^{\perp}$ by
Corollary \ref{cor:key} (1).
If $\sP=\sF(-iH)$
$(0\leq i\leq 3)$, then
$\sP(-L)=\sF(-iH-L)\in \sC^{\perp}$ by
Corollary \ref{cor:key} (3).
If $\sP=T_{\hchow_3/\rG(2,V)}(-iH+L)$ $(i=1,2)$,
then $\sP(-L)=T_{\hchow_3/\rG(2,V)}(-iH)\in \sC^{\perp}$
by Corollary \ref{cor:key} (5).

\vspace{5pt}

\noindent {\bf Case $\sQ=\sF$.}
If $\sP=\sO_{\hchow_4}(-iH)$ ($1\leq i\leq 4$), then
$\sF\otimes \sP=\sF(-iH)\in \sC^{\perp}$ by (\ref{eqn:start2}).
If $\sP=\sO_{\chow_4}(-iH-L)$ $(0\leq i\leq 3)$, then 
$\sF\otimes \sP\simeq \sF(-iH-L)\in \sC^{\perp}$ by
Corollary \ref{cor:key} (3).
For $\sP=\sF(-iH)$ $(0\leq i\leq 3)$,
we note that $\sF\otimes \sF(-iH)\simeq \sO_{\hchow_4}(-iH-L)\oplus
{\ft S}^2 \sF(-iH)$.
By (\ref{eqn:start}), $\sO_{\hchow_4}(-iH-L)\in\sC^{\perp}$. 
For ${\ft S}^2 \sF(-iH)\in \sC^{\perp}$,
we consider $(\ref{exa0})\otimes \sO_{\hchow_4}(-iH)$.
Then it suffices to show 
that
$\sO_{\hchow_4}(-(i+1)H), T_{\hchow/\mathrm{G}(2,V)}(-(i+1)H)\in \sC^{\perp}$.
$\sO_{\hchow_4}(-(i+1)H)\in \sC^{\perp}$ by (\ref{eqn:start2}).
We have $T_{\hchow/\mathrm{G}(2,V)}(-(i+1)H)\in \sC^{\perp}$ by 
Corollary \ref{cor:key} (5). 
Finally we assume that $\sP=T_{\hchow_4/\rG(2,V)}(-iH+L)$ $(i=1,2)$.
The argument below is slightly involved. 
Considering $(\ref{exa1'})\otimes \sF(-(i-1)H+L)$, we are reduced to show
that ${\ft S}^2 \sF\otimes \sF(-(i-2)H),\, \sF(-(i-3)H-2L)\in \sC^{\perp}$.
We have 
$\sF(-(i-3)H-2L)\in \sC^{\perp}$ by Corollary \ref{cor:key} (4). 
As for ${\ft S}^2 \sF\otimes \sF(-(i-2)H)$, we note that the decomposition
${\ft S}^2 \sF\otimes \sF(-(i-2)H)\simeq {\ft S}^3 \sF(-(i-2)H)\oplus
\sF(-L-(i-2)H)$.
We have $\sF(-L-(i-2)H)\in \sC^{\perp}$ by Corollary \ref{cor:key} (3). 
Now we show that ${\ft S}^3 \sF(-(i-2)H)\in \sC^{\perp}$.
Considering $(\ref{ex3})\otimes \sO_{\hchow_4}(-(i-2)H)$,
we are reduced to show
that
$\sO_{\hchow_4}(-(i-2)H-4L),
\sO_{\hchow_4}(-(i-2)H-3L),
\sF(-(i-2)H-2L),
{\ft S}^2 \sF(-(i-2)H-L)\in \sC^{\perp}$.
We have $\sO_{\hchow_4}(-(i-2)H-3L)$ and $\sF(-(i-2)H-2L)\in \sC^{\perp}$
by Corollary \ref{cor:key} (2) and (4) respectively. 
As for 
${\ft S}^2 \sF(-(i-2)H-L)$,
considering 
$(\ref{exa0})\otimes \sO_{\hchow_4}(-(i-2)H-L)$,
we are reduce to show 
$\sO_{\hchow_4}(-(i-1)H-L)\in \sC^{\perp}$, which follows from 
(\ref{eqn:start2}),
and $T_{\hchow_4/\rG(2,V)}(-(i-1)H-L)\in \sC^{\perp}$,
which follows from Corollary \ref{cor:key} (6).
To show that
$\sO_{\hchow_4}(-(i-2)H-4L)\in \sC^{\perp}$,
we consider $(\ref{exa1'})\otimes \sO_{\hchow_4}(-iH-L)$.
Then we are reduced to show that
$T_{\hchow_4/\rG(2,V)}(-(i+1)H-L)\in \sC^{\perp}$,
which follows from Corollary \ref{cor:key} (6), and
${\ft S}^2 \sF(-(i-1)H-2L)\in \sC^{\perp}$.
For the latter, we consider $(\ref{ex4})\otimes \sO_{\hchow_4}(-(i-1)H-2L)$.
Then we are reduced to show that
$\sF(-(i-1)H-2L), \sO_{\hchow_4}(-(i-1)H-2L),
\sO_{\hchow_4}(-(i-1)H-L), \sF(-(i-1)H)\in \sC^{\perp}$,
which follow from Corollary \ref{cor:key} (4) and (1), and 
(\ref{eqn:start2}), respectively.
\end{proof}
Now we have finished our proof of Theorem $\ref{thm:dualLef}$.
$\hfill\square$

\begin{rem}
We believe that 
a similar method work for any $n$ as in \cite{KuzGrIsotropic}
once we can find a suitable
candidate of the dual Lefschetz collection of maximal length.
\end{rem}

\section{{\bf Locally free sheaves $\tilde{\sS}_{L}$, $\tilde{\sQ}$, $\tilde{\frQ}$ on $\widetilde{\hcoY}$}\label{sec:SheavesDef}}
In this section, 
$n$ is any integer greater than or equal to $3$.
\subsection{Birational geometry of the double symmetric loci $\nT_4$}
\label{sub:BirGeom}
As in \cite[\S 3.4]{Geom},
we set
\[
\Hes:=\nS_4,\, \UU:=\widetilde{\nS}_4,\, \hcoY:=\nT_4,\, \Zpq:=\nU_4,
\]
where $\nT_4$ is the double cover of $\nS_4$ branched along $\nS_3$.
We quickly review the main result of \cite{Geom}, which describe
the 
birational geometry of $\hcoY$.

Let 
\[
\hcoY_3:=\mathrm{G}(3,\wedge^2 \mathfrak{Q})
\]
with the universal quotient bundle $\frQ$ of $\rG(n-3,V)$.
In case $n=3$, we consider $\rG(n-3,V)$ is a point and 
$\frQ$ is the vector space $V$.
We denote by $\Prt_{\rho}$ and $\Prt_{\sigma}$
the subvarieties of $\hcoY_3$ parameterizing $\rho$-planes
and $\sigma$-planes respectively (we refer for the definitions
of $\rho$-planes
and $\sigma$-planes to \cite[\S 4.1]{Geom}).
In [ibid., \S 4.5], we have seen
that $\Prt_{\rho}\simeq \rF(n-3, n-2;V)$ and
$\Prt_{\sigma}\simeq \rF(n-3, n;V)$,
where $\rF(a,b;V):=\{(\mC^a, \mC^b)\mid \mC^a\subset \mC^b\subset V\}$
is the flag variety.

 In [ibid.], we construct the following diagram:
\def\FigYsRed{\resizebox{10cm}{!}{\includegraphics{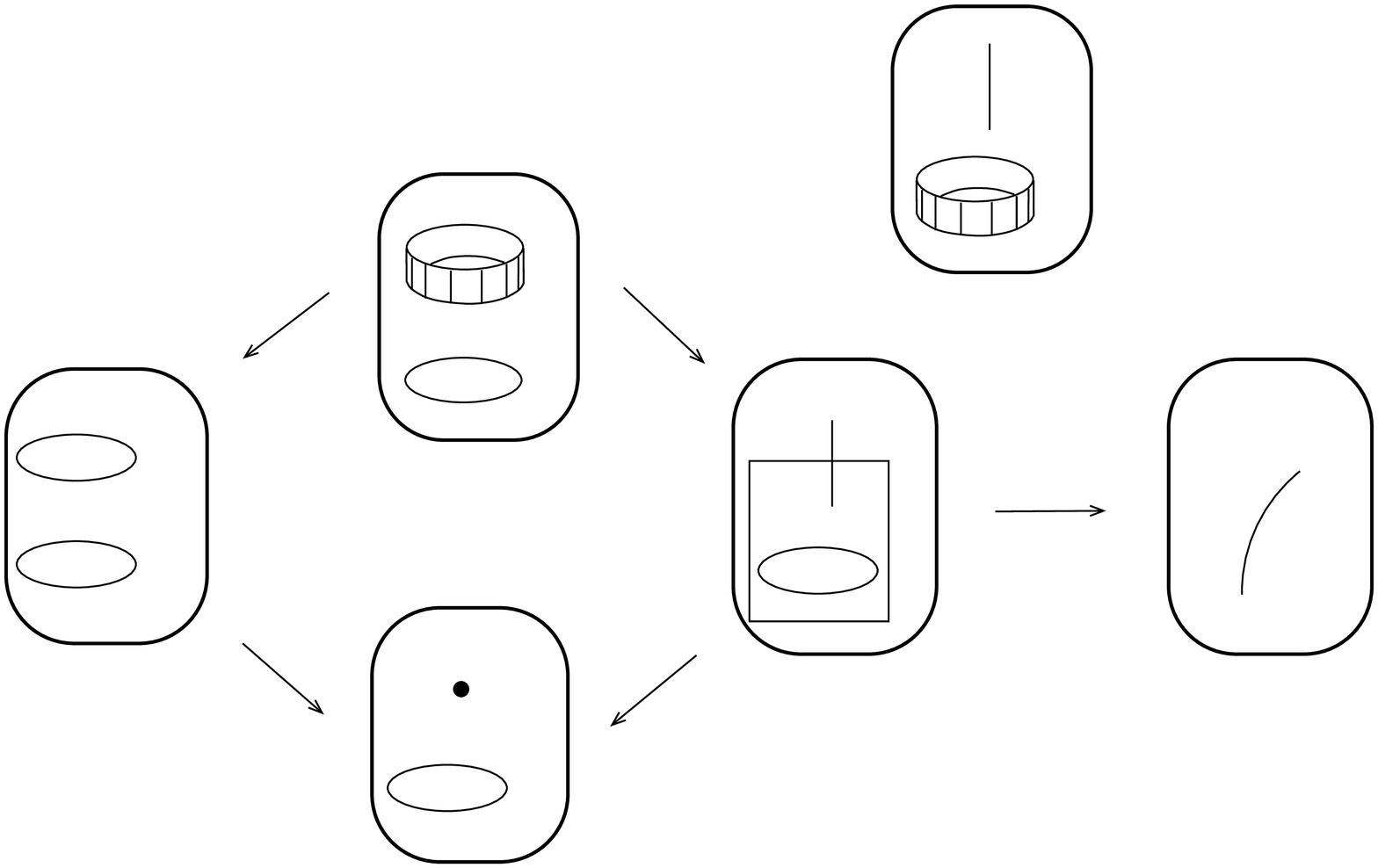}}} 
\def\FigYsDisplay{ 
\begin{xy} 
(0,6)*{\FigYsRed}, 
(-54,5.5)*{\hcoY_3}, 
(-38.5,3.5)*{\Prt_\rho}, 
(-38.5,-5)*{\Prt_\sigma},
(-28,12)*{\,_{\Lrho_{\hcoY_2}}}, 
{(-41.5,-12) \ar 
(-41.5,-20)^{\Lpi_{\hcoY_3}}_{\overset{\rG(3,6)\text{-}}{\text{bundle}}}}, 
(-41.5,-24)*{\rG(n-3,V)}, 
{(-31,0) \ar @{-->} (-2,0)^{\text{(anti-)flip}}}, 
(-25,24)*{\hcoY_2}, 
(-10, 16.5)*{F_\rho}, 
(-11,9)*{\Prt_\sigma}, 
(-2,11)*{\,_{\tLrho_{\hcoY_2}}}, 
(10,15)*{\widetilde\hcoY}, 
(7,1.7)*{F_{\widetilde\hcoY}}, 
(13,7.5)*{G_\rho}, 
(15,-5)*{\Prt_\sigma}, 
(26,-3)*{\,_{\Lrho_{\widetilde\hcoY}}}, 
(10,30)*{\hcoY_0}, 
(27,22)*{F_{\sigma}}, 
(25,30)*{G_\rho}, 
{(10,26) \ar (10,19)},
(-16,-3.8)*{\overline{\hcoY}'}, 
(-28,-8.7)*{\,_{\Lrho_{\hcoY_3}}}, 
( -3,-9.1)*{\,_{\Lp_{\widetilde\hcoY}}}, 
(-16,-15)*{\overline{\Prt}_{\sigma}}, 
(-11.5,-20.5)*{\overline{\Prt}_\sigma}, 
(41.5,14)*{\hcoY}, 
(44.5,-3)*{G_\hcoY}, 
\end{xy} }

\[
\FigYsDisplay\]
where
\begin{itemize}
\item $\overline{\hcoY}'$ is the normalization
of the subvariety $\overline{\hcoY}$ of $\rG(3,\wedge^{n-1} V)$
parametrizing $3$-planes annihilated by at least $n-3$ linearly independent vectors in $V$ by the wedge product ([ibid., Prop.~4.8, 4.9]),
\item
$\hcoY_3\to \overline{\hcoY}'$
is a small contraction
contracting $\Prt_{\rho}$ to $\overline{\Prt}_{\rho}\simeq \rG(n-2,V)$
([ibid., Prop.~4.11])
with the isomorphic image $\overline{\Prt}_{\sigma}$ of ${\Prt}_{\sigma}$,
\item
$\hcoY_3\dashrightarrow \widetilde{\hcoY}$
is the $($anti-$)$\,flip for the small contraction  
$\hcoY_3\to \overline{\hcoY}'$ ([ibid., \S 4.4]),
\item
$\Lp_{\widetilde{\hcoY}}\colon\widetilde{\hcoY}\to \overline{\hcoY}'$
is a small contraction
contracting $G_{\rho}\simeq \mP({\ft S}^2 \sQ_{\rho}^*)$ to
$\overline{\Prt}_{\rho}\simeq\rG(n-2,V)$, where 
$\sQ_{\rho}$ is the universal quotient bundle on $\rG(n-2,V)$
([ibid., Prop.~4.15]),
\item
$\Lrho_{\hcoY_2}\colon \hcoY_2\to \hcoY_3$ is the blow-up
along the subvariety $\Prt_{\rho}$ ([ibid., \S 4.5 and \S 4.7]),
\item
$\widetilde{\Lrho}_{\hcoY_2}\colon \hcoY_2\to \widetilde{\hcoY}$ is the blow-up
along the subvariety $G_{\rho}$ of codimension $n-2$ ([ibid., Prop.~4.22 (1)]),
\item
$\Lrho\,_{\widetilde{\hcoY}}\colon \widetilde{\hcoY}\to \hcoY$ is an extremal divisorial contraction with exceptional divisor $F_{\widetilde{\hcoY}}$
([ibid., \S 4.6, Prop.~4.22 (2), \S 5]), 
\item
$\hcoY_0\to \widetilde{\hcoY}$ is the blow-up along 
the strict transform of $\Prt_{\sigma}$ ([ibid., \S 4.4, Rem.~4.23]).
\end{itemize}

\begin{rem}
In case $n=3$,
we consider $\hcoY_3=\overline{\hcoY}'=\rG(3,\wedge^2 V)$
and $\hcoY_2\simeq \widetilde{\hcoY}$.
\end{rem}

\vspace{5pt}

In the subsequent subsections, 
we introduce locally free sheaves $\tilde{\sS}_{L}$, $\tilde{\sQ}$,
$\tilde{\frQ}$ on $\widetilde{\hcoY}$, which will play central roles
in our construction of the Lefschetz collection in $\sD^b(\widetilde{\hcoY})$. 

In Sections \ref{sec:SheavesDef} and
\ref{section:Lefschetz-Y}, and Appendix \ref{sec:Appendix-D},
 we use the following convention for the invertible
sheaves:

\begin{myitem} 

\item[$\;\quad\quad L_{\Sigma}$:] the pull back on a variety $\Sigma$
of $\sO_{\rG(n-3,V)}(1)$ if there is a morphism $\Sigma\to\rG(n-3,V)$. 
In case $n=3$, we consider $L_{\Sigma}$ as the trivial sheaf
$\wedge^4 V\otimes \sO_{\Sigma}$.

\item[$\quad\quad M_{\Sigma}$:] the pull back on a variety $\Sigma$
of $\sO_{\nS_4}(1)$ if there is a morphism $\Sigma\to\nS_4$. \end{myitem}

We often omit the subscripts $\Sigma$ for $L_{\Sigma}$ and $M_{\Sigma}$
if no confusion is likely possible. 

\subsection{Locally free sheaves $\widetilde{\sS}_{L}$, $\widetilde{\sQ}$ on $\widetilde{\hcoY}$}

Consider the following universal sequence of the Grassmann bundle
$\hcoY_{3}=$ $\rG(3,\wedge^{2}\frQ)$ over $\rG(n-3,V)$ (cf. \cite[p.434]{Ful}):
\begin{equation}
0\to\sS\to\Lpi_{\hcoY_{3}}^{\;*}{\wedge^2\frQ}\to\sQ\to0,\label{eq:univ}\end{equation}
where $\sS$ is the relative universal subbundle of rank three and
$\sQ$ is the relative universal quotient bundle of rank three. Similarly,
we denote by $\bar{\sS}$ the universal subbundle of rank three of
the Grassmannian $\rG(3,\wedge^{n-1}V)$. Then we have
\begin{prop}
$\sS(-L_{\hcoY_{3}})$ is the pull-back of $\bar{\sS}$. 
\end{prop}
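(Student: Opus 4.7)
The plan is to realise $\sS(-L_{\hcoY_3})$ explicitly as a rank-three subbundle of the trivial bundle $\wedge^{n-1}V\otimes\sO_{\hcoY_3}$, and then to recognise that subbundle as the pull-back of $\bar\sS$ along the natural map $\hcoY_3\to\overline{\hcoY}'\to\overline{\hcoY}\subset\rG(3,\wedge^{n-1}V)$ recalled in Subsection~\ref{sub:BirGeom}.

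The starting point is the map of locally free sheaves on $\rG(n-3,V)$
\[
\det\sS_{n-3,V}\otimes\wedge^2\frQ \hookrightarrow \wedge^{n-1}V\otimes\sO_{\rG(n-3,V)}
\]
induced by the exterior product: fibrewise at $[U]\in\rG(n-3,V)$, it sends $(u_1\wedge\cdots\wedge u_{n-3})\otimes\omega$ to $u_1\wedge\cdots\wedge u_{n-3}\wedge\tilde\omega$ for any lift $\tilde\omega\in\wedge^2 V$ of $\omega\in\wedge^2(V/U)$. A short basis computation shows that this is an injection of vector bundles whose image at $[U]$ coincides with $\bigcap_{u\in U}\ker\bigl(u\wedge{-}\colon\wedge^{n-1}V\to\wedge^n V\bigr)=\det U\wedge\wedge^2 V$, both being $6$-dimensional. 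Pulling back along $\Lpi_{\hcoY_3}$ and composing with the tautological inclusion $\sS\hookrightarrow\Lpi_{\hcoY_3}^{\,*}\wedge^2\frQ$ from (\ref{eq:univ}) produces
\[
\Lpi_{\hcoY_3}^{\,*}(\det\sS_{n-3,V})\otimes\sS \hookrightarrow \wedge^{n-1}V\otimes\sO_{\hcoY_3}.
\]
Under the Pl\"ucker convention $\sO_{\rG(n-3,V)}(1)=\det\sS_{n-3,V}^{\,*}$, the first factor equals $L_{\hcoY_3}^{-1}$, so the left-hand side is precisely $\sS(-L_{\hcoY_3})$.

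The last step is a fibrewise identification of this subbundle with $\bar\sS|_{\hcoY_3}$. At a point $([U],[W])\in\hcoY_3$ the subspace cut out above is $\det U\otimes W\subset\det U\wedge\wedge^2V\subset\wedge^{n-1}V$, which by the kernel computation is exactly the $3$-plane annihilated by every vector of $U$ under the wedge product. By \cite[Prop.~4.8, 4.9]{Geom} this is precisely the defining characterisation of points of $\overline{\hcoY}$, so the classifying morphism $\hcoY_3\to\rG(3,\wedge^{n-1}V)$ factors as $\hcoY_3\to\overline{\hcoY}'\to\overline{\hcoY}\subset\rG(3,\wedge^{n-1}V)$, and $\sS(-L_{\hcoY_3})$ is identified with $\bar\sS|_{\hcoY_3}$ by universality of the tautological subbundle. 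The only real care needed is the line-bundle convention for $L_{\hcoY_3}$; everything else is formal, and the degenerate case $n=3$ (where $L_{\hcoY_3}$ is trivial and $\hcoY_3=\rG(3,\wedge^2V)=\rG(3,\wedge^{n-1}V)$) is automatic.
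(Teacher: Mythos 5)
Your proposal is correct and follows essentially the same route as the paper: the key point in both is that the map $\hcoY_3\to\rG(3,\wedge^{n-1}V)$ sends $([\bar U],[V_{n-3}])$ to $[\bar U\wedge\wedge^{n-3}V_{n-3}]$, so the tautological subbundle pulls back to $\sS$ twisted by $\wedge^{n-3}V_{n-3}=-L_{\hcoY_3}$. You merely carry out globally (as a map of bundles, with the injectivity and image checks made explicit) what the paper states fibrewise in two lines.
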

\begin{proof} We may write a point of $\hcoY_{3}$ by $y=([\bar{U}],[V_{n-3}])$
with $[\bar{U}]\in \rG(3,\wedge^2 (V/V_{n-3}))$, 
$[V_{n-3}]\in \rG(n-3,V)$. 
$y$ is mapped to $[U]=[\bar{U}\wedge \wedge^{n-3} V_{n-3}]\in\overline{\hcoY}$. Note
that $\wedge^{n-3} V_{n-3}=\Lpi_{\hcoY_{3}}^{\;*}\sO_{\mP(V)}(-1)\vert_{y}=-L_{\hcoY_{3}}\vert_{y}$.
Therefore $\sS(-L_{\hcoY_{3}})$ is the pull-back of $\bar{\sS}$. 

\end{proof}

Now we have the following proposition (and definition):
\begin{prop}
\label{def:SQT1} There exist locally free sheaves $\widetilde{\sS}_{L}$
and $\widetilde{\sQ}$ on $\widetilde{\hcoY}$ which satisfy \[
\Lrho_{\hcoY_{2}}^{\;*}\sS(-L_{\hcoY_{2}})=\tLrho_{\hcoY_{2}}^{\;*}\widetilde{\sS}_{L}\text{ and }\Lrho_{\hcoY_{2}}^{\;*}\sQ=\tLrho_{\hcoY_{2}}^{\;*}\widetilde{\sQ}.\]

\end{prop}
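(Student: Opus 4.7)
The plan is to construct both sheaves by descending the pullbacks from $\hcoY_3$ to $\hcoY_2$ through the blow-up $\tLrho_{\hcoY_2}\colon\hcoY_2\to\widetilde{\hcoY}$; the case $n=3$ is trivial since $\hcoY_2\simeq\widetilde{\hcoY}$ and there is nothing to descend.

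For $\widetilde{\sS}_L$, I would exploit the previous proposition, which identifies $\sS(-L_{\hcoY_3})$ with the pullback of the universal subbundle $\bar{\sS}$ via $\hcoY_3\to\overline{\hcoY}'\hookrightarrow\rG(3,\wedge^{n-1}V)$. Since $\hcoY_3\dashrightarrow\widetilde{\hcoY}$ is by construction the flip of the small contraction $\Lrho_{\hcoY_3}\colon\hcoY_3\to\overline{\hcoY}'$, the two compositions $\Lrho_{\hcoY_3}\circ\Lrho_{\hcoY_2}$ and $\Lp_{\widetilde{\hcoY}}\circ\tLrho_{\hcoY_2}$ coincide as morphisms $\hcoY_2\to\overline{\hcoY}'$. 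Defining $\widetilde{\sS}_L:=\Lp_{\widetilde{\hcoY}}^{\;*}\bar{\sS}$ then immediately yields $\tLrho_{\hcoY_2}^{\;*}\widetilde{\sS}_L=\Lrho_{\hcoY_2}^{\;*}\sS(-L_{\hcoY_2})$, using that $L$ is consistently pulled back from $\rG(n-3,V)$ on all the varieties in the flip diagram.

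For $\widetilde{\sQ}$, no analogous sheaf lives naturally on $\overline{\hcoY}'$, so I would invoke the fiber-triviality criterion for descent through a blow-up. The exceptional divisor $E$ of $\tLrho_{\hcoY_2}$ is a $\mP^{n-3}$-bundle over $G_\rho$, and under the flip structure it coincides with the exceptional divisor of $\Lrho_{\hcoY_2}$, viewed as a projective bundle over $\Prt_\rho$. It suffices to show that $\Lrho_{\hcoY_2}^{\;*}\sQ$ is trivial on each $\mP^{n-3}$-fiber of $E\to G_\rho$, in which case $\tLrho_{\hcoY_2\,*}\Lrho_{\hcoY_2}^{\;*}\sQ$ is locally free and its pullback recovers the original sheaf. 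Under the flip, these fibers project via $\Lrho_{\hcoY_2}\vert_E$ onto the $\mP^{n-3}$-fibers of the forgetful map $\Prt_\rho=\rF(n-3,n-2;V)\to\rG(n-2,V)$, so the problem reduces to showing that $\sQ|_{\Prt_\rho}$ is pulled back from $\rG(n-2,V)$.

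The main obstacle is precisely this verification for $\sQ$. To carry it out, I would restrict the universal sequence $0\to\sS\to\Lpi_{\hcoY_3}^{\;*}\wedge^2\frQ\to\sQ\to 0$ to $\Prt_\rho$ and analyze $\sS|_{\Prt_\rho}$ using the flag-variety identification $\Prt_\rho\simeq\rF(n-3,n-2;V)$. Although $\Lpi_{\hcoY_3}^{\;*}\wedge^2\frQ|_{\Prt_\rho}$ and $\sS|_{\Prt_\rho}$ each depend non-trivially on the varying $V_{n-3}$, a Pl\"ucker-type identification should show that the quotient $\sQ|_{\Prt_\rho}$ becomes $\wedge^2(V/V_{n-2})$ (up to a line bundle pulled back from $\rG(n-2,V)$), hence descends along the forgetful map. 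The delicate part is matching the line-bundle twists correctly so that the descended $\widetilde{\sQ}$ is globally locally free on $\widetilde{\hcoY}$.
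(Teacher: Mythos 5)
Your proposal follows essentially the same route as the paper: $\widetilde{\sS}_L$ is defined as the pullback of $\bar{\sS}$ via $\widetilde{\hcoY}\to\overline{\hcoY}'$ using commutativity of the flip diagram, and $\widetilde{\sQ}$ is obtained by the fiber-triviality descent criterion through the blow-up $\tLrho_{\hcoY_2}$ (the paper's Lemma \ref{lem:desbundle}), after reducing to triviality of $\sQ|_{\Prt_\rho}$ on the fibers of $\rF(n-3,n-2;V)\to\rG(n-2,V)$. The only step you leave as a sketch is the fiberwise identification, which the paper settles directly from $\sS\vert_{[V_{n-3},V_{n-2}]}=(V/V_{n-2})\wedge(V_{n-2}/V_{n-3})$, giving $\sQ\vert_{[V_{n-3},V_{n-2}]}\simeq\wedge^2(V/V_{n-2})$ with no line-bundle twist to worry about.
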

\begin{proof} We define $\widetilde{\sS}_{L}$ to be the pullback
of $\bar{\sS}$ to $\widetilde{\hcoY}$, then the first claim is immediate
by the commutativity of the morphisms in the diagram in Subsection \ref{sub:BirGeom}. To see the existence
of $\widetilde{\sQ}$, consider the universal sequence (\ref{eq:univ})
on $\hcoY_{3}$. Let $[V_{n-3},V_{n-2}]$ be a point on the exceptional
locus $\Prt_{\rho}=\rF(n-3,n-2,V)\to\overline{\Prt}_{\rho}=\rG(n-2,V)$ of the small resolution $\hcoY_{3}\to\overline{\hcoY}'$.
Since $\sS\vert_{[V_{n-3},V_{n-2}]}=(V/V_{n-2})\wedge(V_{n-2}/V_{n-3})$ by
\cite[Prop.~4.1]{Geom}, we have $0\to(V/V_{n-2})\wedge(V_{n-2}/V_{n-3})\to\wedge^{2}(V/V_{n-3})\to\sQ\vert_{[V_{n-3},V_{n-2}]}\to0$.
Hence we have $\sQ\vert_{[V_{n-3},V_{n-2}]}\simeq\wedge^{2}(V/V_{n-2})$,
which implies $\sQ\vert_{\gamma}\simeq\sO_{\gamma}^{\oplus3}$ for the fiber
$\gamma=\mP^{n-3}$ of $\rF(n-3,n-2,V)\to\rG(n-2,V)$ over $[V_{n-2}]$. 
It also implies that 
$\Lrho_{\hcoY_{2}}^{\;*}\sQ$ is trivial on a fiber of 
$\tLrho_{\hcoY_{2}}\colon \hcoY_2\to \widetilde{\hcoY}$
by \cite[Prop.~4.22]{Geom}.
The last property and Lemma \ref{lem:desbundle} below ensure
the existence of a locally free sheaf $\widetilde{\sQ}$ on $\widetilde{\hcoY}$
such that 
$\Lrho_{\hcoY_{2}}^{\;*}\sQ=\tLrho_{\hcoY_{2}}^{\;*}\widetilde{\sQ}$.
\end{proof}

The following lemma should be well-known for experts but we give a proof
for readers' convenience:
\begin{lem}
\label{lem:desbundle}
Let $Y$ be a smooth projective variety and $f\colon X\to Y$ the blow-up
along a smooth subvariety $S\subset Y$.
Let $\sE$ be a locally free sheaf on $X$ of rank $r$ such that,
for any nontrivial fiber $\gamma$ of $f$, it holds that 
$\sE|_{\gamma}\simeq \sO_{\gamma}^{\oplus r}$. Then there exists a locally free sheaf $\overline{\sE}$ on $Y$
such that $\sE=f^*\overline{\sE}$.
\end{lem}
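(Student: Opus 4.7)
The plan is to set $\overline{\sE} := f_{*}\sE$ and then verify two things: (a) $\overline{\sE}$ is locally free of rank $r$ on $Y$, and (b) the adjunction counit $\varphi\colon f^{*}\overline{\sE}\to \sE$ is an isomorphism. Once (a) and (b) are established, the lemma follows.

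For (a), I would use the description of the fibers of $f$. Since $f$ is the blow-up of the smooth subvariety $S\subset Y$, each fiber $\gamma_{y}=f^{-1}(y)$ is either a single reduced point (when $y\notin S$) or a projective space $\mathbb{P}^{c-1}$ with $c=\mathrm{codim}(S,Y)$ (when $y\in S$). In either case, the hypothesis gives $\sE|_{\gamma_{y}}\simeq \sO_{\gamma_{y}}^{\oplus r}$, so
\[
h^{0}(\gamma_{y},\sE|_{\gamma_{y}})=r,\qquad h^{i}(\gamma_{y},\sE|_{\gamma_{y}})=0 \ (i>0),
\]
is constant in $y$. By cohomology and base change (Grauert's theorem, applied inductively starting from the top degree), $R^{i}f_{*}\sE=0$ for $i>0$, the sheaf $\overline{\sE}=f_{*}\sE$ is locally free of rank $r$, and the base change map $(\overline{\sE})\otimes k(y)\to H^{0}(\gamma_{y},\sE|_{\gamma_{y}})$ is an isomorphism for every $y\in Y$.

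For (b), the map $\varphi\colon f^{*}\overline{\sE}\to \sE$ is a morphism of locally free sheaves of the same rank $r$ on $X$, so it suffices to check that it is fiberwise surjective. Restricting to $\gamma_{y}$, the map $\varphi|_{\gamma_{y}}$ identifies with the evaluation map $H^{0}(\gamma_{y},\sE|_{\gamma_{y}})\otimes \sO_{\gamma_{y}}\to \sE|_{\gamma_{y}}$, which is an isomorphism because $\sE|_{\gamma_{y}}$ is trivial. Hence $\varphi$ is an isomorphism.

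There is no serious obstacle; the argument is a direct application of Grauert's theorem together with the evaluation-map criterion. The only point requiring a little care is making sure the fiber dimension of $f$ can jump (it equals $0$ off $S$ and $c-1$ over $S$) but this is harmless because the numerical invariants $h^{i}(\gamma_{y},\sE|_{\gamma_{y}})$ we need are constant in $y$ thanks to the explicit triviality hypothesis, so cohomology and base change applies uniformly.
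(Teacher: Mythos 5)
Your strategy---set $\overline{\sE}=f_{*}\sE$ and show the counit $f^{*}f_{*}\sE\to\sE$ is an isomorphism---is natural, but the central step is not justified. Grauert's theorem, and every standard form of semicontinuity and cohomology-and-base-change, requires the sheaf to be \emph{flat over the base} $Y$. A blow-up along a center of codimension $c\geq 2$ is never flat (the fiber dimension jumps from $0$ to $c-1$), and therefore $\sE$, although locally free on $X$, is not flat over $Y$. The constancy of $h^{i}(\gamma_{y},\sE|_{\gamma_{y}})$ is an \emph{additional} hypothesis of Grauert's theorem, not a substitute for flatness; so the closing remark that the jumping fiber dimension is ``harmless'' points at exactly the place where the argument breaks. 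The intermediate assertions ($R^{i}f_{*}\sE=0$ for $i>0$, local freeness of $f_{*}\sE$, base change along fibers) are in fact true in this situation, but establishing them requires a different tool---for instance the theorem on formal functions applied to the infinitesimal neighborhoods of the fibers (using that the conormal data of a fiber $\mP^{c-1}\subset X$ only involves $\sO_{\mP^{c-1}}(m)$ with $m\geq 0$), or flat base change for the genuinely flat $\mP^{c-1}$-bundle $E\to S$ followed by gluing with the locus where $f$ is an isomorphism. Step (b) of your proposal is fine once base change is actually available.

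The paper sidesteps the issue entirely by a Mori-theoretic construction: it passes to $\pi\colon\mP(\sE)\to X$, notes that $\pi^{-1}(\gamma)\simeq\mP^{r-1}\times\gamma$ with the tautological divisor of type $(1,0)$, contracts the resulting $K$-negative extremal ray over $Y$ to obtain a smooth $\mP^{r-1}$-bundle $q\colon\overline{\mP}\to Y$, and defines $\overline{\sE}:=q_{*}\sO_{\overline{\mP}}(\overline{H})$---a pushforward along a flat morphism, where no base-change subtlety arises. Either repair (formal functions, or the paper's contraction argument) yields the lemma; as written, your proof has a genuine gap.
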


\begin{proof}
We give a proof by using Mori theory.
Let $\pi\colon \mP(\sE)\to X$ be the natural projection
and $H$ the tautological divisor.
We denote by $E$ the $f$-exceptional divisor and
set $F:=\pi^{-1}(E)$.
Since
$\sE|_{\gamma}\simeq \sO_{\gamma}^{\oplus r}$,
we have $\pi^{-1}(\gamma)\simeq \mP^{r-1}\times \gamma$ and 
$H|_{\pi^{-1}(\gamma)}$ is a divisor of type $(1,0)$.
Moreover, since $K_{\mP(\sE)}=-rH+\pi^*(\det \sE +K_X)$,
any curve in a fiber of $\pi^{-1}(\gamma)\to \mP^{r-1}$
is negative for $K_{\mP(\sE)}$.
Therefore any curve in a fiber of $\pi^{-1}(\gamma)\to \mP^{r-1}$
spans a $K_{\mP(\sE)}$-negative extremal ray in $\overline{\mathrm{NE}}(\mP(\sE)/Y)$
and a sufficient multiple of $H$ defines a birational morphism $p\colon \mP(\sE)\to \overline{\mP}$ over $Y$ contracting 
fibers of $\pi^{-1}(\gamma)\to \mP^{r-1}$.
Let $q\colon \overline{\mP}\to Y$ be the induced morphism.
It is easy to see that $\overline{\mP}$ is smooth and $p$ is the blow-up along
$p(F)$.
Moreover, $H$ is the pull-back of a Cartier divisor $\overline{H}$ on $\overline{\mP}$ and the restriction of $\overline{H}$ to any fiber $\simeq \mP^{r-1}$ of $\overline{\mP}\to Y$ is $\sO_{\mP^{r-1}}(1)$.
Therefore $\overline{\sE}:=q_*\sO_{\overline{\mP}}(\overline{H})$ is a locally free sheaf of rank $r$ on $Y$ and it holds that $\sE\simeq f^* \overline{\sE}$ by construction.  
\end{proof}

\subsection{Locally free sheaf $\widetilde{\frQ}$ on $\widetilde{\hcoY}$}

Let us focus on the local geometry of the blow-up $\hcoY_{3}\to\overline{\hcoY}'$
which is described by $\Prt_{\rho}=\rF(n-3,n-2,V)\to\overline{\Prt}_{\rho}=\rG(n-2,V)$. We denote
the universal sub-bundles of the partial flag variety $\rF(n-3,n-2,V)$
by $\sR_{n-3}\subset\sR_{n-2}\subset\sR_{V}$, where we set $\sR_{V}:=V\otimes\sO_{F(n-3,n-2,V)}$
and $\rank\sR_{k}=k$. There is an exact sequence \begin{equation}
0\to\sR_{n-2}/\sR_{n-3}\to\sR_{V}/\sR_{n-3}\to\sR_{V}/\sR_{n-2}\to0.\label{eq:exact-sequence-Rv}\end{equation}
It is also useful to identify $\Prt_{\rho}$ with the projective
bundle $\mP(\frQ)$ over $\rG(n-3,V)$. Then the exact sequence above
is nothing but the relative Euler sequence of the projective bundle
$\pi_{\sP}:\mP(\frQ)\to\rG(n-3,V)$ with\[
\sR_{n-2}/\sR_{n-3}=\sO_{\mP(\,\frQ\,\,)}(-1),\,\sR_{V}/\sR_{n-3}=\pi_{\sP}^{*}\frQ,\]
and $\sR_{V}/\sR_{n-2}$ is the twisted relative tangent bundle 
$T_{\Prt_{\rho}/\rG(n-2,V)}(-1)$.

\begin{prop}
Let $\Lpi_{\hcoY_{2}}=\Lpi_{\hcoY_{3}}\circ\Lrho_{\hcoY_{2}}$ be
the composite of $\Lrho_{\hcoY_{2}}:\hcoY_{2}\to\hcoY_{3}$ with
$\Lpi_{\hcoY_{3}}:\hcoY_{3}\to\rG(n-3,V)$. Denote by $i:F_{\rho}\hookrightarrow\hcoY_{2}$
the inclusion of the exceptional divisor, and by $\Lrho_F:F_{\rho}\to\Prt_{\rho}\subset\hcoY_{3}$
the restriction $\Lrho_F:=\Lrho_{\hcoY_{2}}\vert_{F_{\rho}}$. Then
the kernel \[
\frR:=\Ker\left\{ \Lpi_{\hcoY_{2}}^{\;*}\frQ^{*}\to i_{*}\circ\Lrho_F^{*}
\sO_{\mP(\,\frQ\,\,)}(1)\right\} \]
is a locally free sheaf on $\hcoY_{2}$. 
\end{prop}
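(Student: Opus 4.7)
The plan is to realize $\frR$ as the kernel of a surjection from a locally free sheaf onto the pushforward of a locally free sheaf along the smooth Cartier divisor $F_\rho \hookrightarrow \hcoY_2$, and then invoke a standard local-freeness principle for kernels of this shape.

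First I would make the defining map explicit and check surjectivity. Under the identification $\Prt_\rho = \mP(\frQ)$ over $\rG(n-3,V)$, dualizing the tautological inclusion $\sO_{\mP(\frQ)}(-1) \hookrightarrow \pi_\sP^{*}\frQ$ recorded by $(\ref{eq:exact-sequence-Rv})$ yields the surjection $\pi_\sP^{*}\frQ^{*} \twoheadrightarrow \sO_{\mP(\frQ)}(1)$. Since $\Lpi_{\hcoY_2}\vert_{F_\rho} = \pi_\sP \circ \Lrho_F$, pulling back along $\Lrho_F$ produces a surjection
\[
\psi\colon (\Lpi_{\hcoY_2}^{\;*}\frQ^{*})\vert_{F_\rho} \twoheadrightarrow \Lrho_F^{*}\sO_{\mP(\frQ)}(1).
\]
By the closed-immersion adjunction $\Hom_{\hcoY_2}(-, i_{*}(-)) = \Hom_{F_\rho}((-)\vert_{F_\rho}, -)$, the map $\psi$ corresponds to a morphism $\Phi\colon \Lpi_{\hcoY_2}^{\;*}\frQ^{*} \to i_{*}\Lrho_F^{*}\sO_{\mP(\frQ)}(1)$, which is the map defining $\frR$. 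Since the codomain of $\Phi$ is supported on $F_\rho$ and $\psi$ is surjective, $\Phi$ itself is surjective as a morphism of $\sO_{\hcoY_2}$-modules.

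Next I would establish the following \emph{general local-freeness principle}: if $X$ is smooth, $D \subset X$ is a smooth Cartier divisor with inclusion $j$, $\sE$ is locally free on $X$, $\sH$ is locally free on $D$, and $\Phi\colon \sE \to j_{*}\sH$ is surjective, then $\Ker\Phi$ is locally free on $X$ of rank $\rank\sE$. The proof is a routine local computation: near a point of $D$ choose a trivialization $\sE \cong \sO_X^{\oplus r}$, a local equation $t$ for $D$, and a trivialization $\sH \cong \sO_D^{\oplus s}$; surjectivity of $\Phi\vert_D$ together with Nakayama allows a local change of basis so that $\Phi$ is the map $(x_1,\dots,x_r)\mapsto (\bar x_1,\dots,\bar x_s)$ modulo $t$. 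Its kernel is then freely generated by $t e_1,\dots,t e_s, e_{s+1},\dots,e_r$, hence locally free of rank $r$; globally it fits in the short exact sequence
\[
0 \to \sE(-D) \to \Ker\Phi \to j_{*}\Ker(\sE\vert_D \to \sH) \to 0.
\]

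Applying this with $X = \hcoY_2$, $D = F_\rho$, $\sE = \Lpi_{\hcoY_2}^{\;*}\frQ^{*}$, $\sH = \Lrho_F^{*}\sO_{\mP(\frQ)}(1)$ and $\Phi$ the morphism of the first paragraph yields immediately that $\frR$ is locally free of rank $\rank\frQ^{*} = 4$. The only potentially delicate step is the Nakayama-based reduction to the standard matrix form for $\Phi$, which is entirely routine; I do not foresee any substantive obstacle.
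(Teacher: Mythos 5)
Your proposal is correct, and the construction of the defining surjection $\Lpi_{\hcoY_{2}}^{\;*}\frQ^{*}\to i_{*}\Lrho_F^{*}\sO_{\mP(\frQ)}(1)$ is the same as in the paper: there, too, it is obtained from the dual of the relative Euler sequence (\ref{eq:exact-sequence-Rv}) composed with the restriction $\Lpi_{\hcoY_{2}}^{\;*}\frQ^{*}\to i_{*}\circ\Lrho_F^{*}\frQ^{*}$, your adjunction formulation being equivalent. The only genuine difference is how local freeness of the kernel is then extracted. The paper applies $\sE xt(-,\sO_{\hcoY_{2}})$ to the short exact sequence $0\to\frR\to\Lpi_{\hcoY_{2}}^{\;*}\frQ^{*}\to i_{*}\Lrho_F^{*}\sO_{\mP(\frQ)}(1)\to0$ and invokes the homological criterion of \cite[III, Ex 6.6]{Ha}: the middle term has no higher $\sE xt$ into $\sO$, and $i_{*}$ of a locally free sheaf on a smooth divisor has $\sE xt^{q}=0$ for $q\neq 1$, so the long exact sequence kills $\sE xt^{q}(\frR,\sO)$ for $q\geq 1$. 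You instead prove the local-freeness principle directly, by trivializing, splitting the surjection on $F_{\rho}$, and exhibiting an explicit free basis $t e_{1},\dots,t e_{s},e_{s+1},\dots,e_{r}$ of the kernel; this is the standard elementary-transformation computation and is perfectly sound. Your route is more elementary and has the side benefit of producing the extra exact sequence $0\to\sE(-D)\to\Ker\Phi\to j_{*}\Ker(\sE\vert_{D}\to\sH)\to0$ for free; the paper's route is shorter on the page and dovetails with its subsequent use of $\sE xt$ and Grothendieck--Verdier duality to derive the dual sequence (\ref{eq:Eb}). Either way the conclusion, including the rank count $\rank\frR=\rank\frQ^{*}=4$ consistent with Lemma \ref{lem:T2-sum}, is established.
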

\begin{proof}From (\ref{eq:exact-sequence-Rv}), we have a surjection
$\Lrho_F^{*}(\sR_{V}/\sR_{n-3})^{*}\to\Lrho_F^{*}(\sR_{n-2}/\sR_{n-3})^{*}\to0$\textcolor{red}{{}
}and also $i_{*}\circ\Lrho_F^{*}(\sR_{V}/\sR_{n-3})^{*}\to i_{*}\circ\Lrho_F^{*}(\sR_{n-2}/\sR_{n-3})^{*}\to0$.
Let $\frR$ be the kernel of the composite of the latter and
the natural surjection $\Lpi_{\hcoY_{2}}^{\;*}\frQ^{*}\to i_{*}\circ\Lrho_F^{*}\frQ^{*}=i_{*}\circ\Lrho_F^{*}(\sR_{V}/\sR_{n-3})^{*}$.
Then we obtain the exact sequence \begin{equation}
0\to\frR\to\Lpi_{\hcoY_{2}}^{\;*}\frQ^{*}\to i_{*}\circ\Lrho_F^{*}
\sO_{\mP(\,\frQ\,\,)}(1)\to0.\label{eq:exact-seq-T2}\end{equation}
 By taking $\sE xt(-,\sO_{\hcoY_{2}})$ of this sequence, we see that
$\frR$ is a locally free sheaf on $\hcoY_{2}$ (see \cite[III, Ex 6.6]{Ha}).
\end{proof}
\begin{lem}
\label{lem:T2-sum} $\frR\vert_{\delta}=\sO_{\delta}^{\oplus4}$
for each fiber $\delta\simeq\mP^{n-3}$ of $F_{\rho}\to G_{\rho}$.
\end{lem}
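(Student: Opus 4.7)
The plan is to restrict the defining short exact sequence of $\frR$ to $\delta$ and assemble the result using the structure of the flip. First I would identify $\delta$ geometrically: since $\hcoY_2$ is simultaneously the blow-up of $\hcoY_3$ along $\Prt_{\rho}$ and of $\widetilde{\hcoY}$ along $G_{\rho}$, the exceptional divisor $F_{\rho}$ has two projections compatible with the small contractions to $\overline{\Prt}_{\rho}=\rG(n-2,V)$, and a fiber $\delta\simeq\mP^{n-3}$ of $F_{\rho}\to G_{\rho}$ lying over $[V_{n-2}]\in\overline{\Prt}_\rho$ maps isomorphically under $\Lrho_F$ to the fiber $\rG(n-3,V_{n-2})\simeq\mP(V_{n-2}^{*})$ of $\Prt_{\rho}\to\rG(n-2,V)$. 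Under this identification the composite $\delta\hookrightarrow\hcoY_2\to\rG(n-3,V)$ is the natural sub-Grassmannian embedding, and the tautological subbundle $\sU\subset V_{n-2}\otimes\sO_{\delta}$ satisfies $V_{n-2}/\sU\simeq\sO_{\delta}(1)$.

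Next, restricting the defining sequence of $\frR$ to $\delta$ yields the four-term exact sequence
\[
0\to\mathcal{T}\to\frR|_{\delta}\to\Lpi_{\hcoY_2}^{\,*}\frQ^{*}|_{\delta}\to\Lrho_F^{*}\sO_{\mP(\frQ)}(1)|_{\delta}\to 0,
\]
where $\mathcal{T}:=\mathrm{Tor}_{1}^{\sO_{\hcoY_2}}(i_{*}\Lrho_F^{*}\sO_{\mP(\frQ)}(1),\sO_{\delta})$. Pulling back and dualizing $0\to V_{n-2}/\sU\to V/\sU\to(V/V_{n-2})\otimes\sO_{\delta}\to 0$ produces $0\to\sO_{\delta}^{\oplus 3}\to\Lpi_{\hcoY_2}^{\,*}\frQ^{*}|_{\delta}\to\sO_{\delta}(-1)\to 0$, and the restriction of the dual Euler surjection $\frQ^{*}\twoheadrightarrow\sO_{\mP(\frQ)}(1)$ (which in fiber coordinates at $([V_{n-3}],[V_{n-2}])$ reads $(V/V_{n-3})^{*}\twoheadrightarrow(V_{n-2}/V_{n-3})^{*}$) is precisely the quotient onto $\sO_{\delta}(-1)$ in this sequence, so the kernel of the third arrow above is $\sO_{\delta}^{\oplus 3}$.

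For the Tor term I would use the second blow-up structure: $\delta$ is a $\mP^{n-3}$-fiber of the blow-up $\tLrho_{\hcoY_2}\colon\hcoY_2\to\widetilde{\hcoY}$ along $G_{\rho}$ (of codimension $n-2$), so $\sO_{\hcoY_2}(-F_{\rho})|_{\delta}\simeq\sO_{\delta}(1)$. The standard Koszul-style identity $\mathrm{Tor}_{1}^{\sO_{X}}(i_{*}\mathcal{G},\sO_{Z})\simeq\mathcal{G}|_{Z}\otimes\sO_{X}(-D)|_{Z}$ for $Z\subset D\subset X$ with $D$ a smooth divisor then gives $\mathcal{T}\simeq\sO_{\delta}(-1)\otimes\sO_{\delta}(1)\simeq\sO_{\delta}$. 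Assembling, $\frR|_{\delta}$ is an extension of $\sO_{\delta}^{\oplus 3}$ by $\sO_{\delta}$ on $\mP^{n-3}$; since $H^{1}(\mP^{n-3},\sO)=0$ this extension splits, yielding $\frR|_{\delta}\simeq\sO_{\delta}^{\oplus 4}$. The main delicacy will be the fiber-wise geometric identifications across the flip and keeping the $\sO_{\delta}(\pm 1)$ twists consistent throughout; once these are correctly set up, the Tor identification and the splitting are routine.
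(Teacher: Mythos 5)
Your argument is correct and follows essentially the same route as the paper's proof: restrict the defining sequence of $\frR$ to $\delta$, identify $\Lpi_{\hcoY_2}^{\;*}\frQ^{*}|_{\delta}\simeq\sO_{\delta}^{\oplus3}\oplus\sO_{\delta}(-1)$ and $\Lrho_F^{*}\sO_{\mP(\,\frQ\,\,)}(1)|_{\delta}\simeq\sO_{\delta}(-1)$, deduce that $\frR|_{\delta}$ is an extension of $\sO_{\delta}^{\oplus3}$ by a line bundle, show that line bundle is trivial, and split the extension using $H^{1}(\mP^{n-3},\sO)=0$. The only cosmetic difference is how the rank-one kernel is identified: you compute it directly as $\mathrm{Tor}_{1}\simeq\Lrho_F^{*}\sO_{\mP(\,\frQ\,\,)}(1)|_{\delta}\otimes\sO_{\hcoY_{2}}(-F_{\rho})|_{\delta}$, whereas the paper restricts $\det\frR\simeq\sO_{\hcoY_{2}}(-L_{\hcoY_{2}}-F_{\rho})$ to $\delta$; both hinge on the same input $F_{\rho}|_{\delta}\simeq\sO_{\mP^{n-3}}(-1)$.
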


\begin{proof}Each fiber $\delta$ of $F_{\rho}\to G_{\rho}$ projects
isomorphically to a fiber of $F(n-3,n-2,V)\to\rG(n-2,V)$, and further to
a copy of $\mP^{n-3}$ in $\rG(n-3,V)$. Therefore $\Lpi_{\hcoY_{2}}^{\;*}\frQ^{*}\vert_{\delta}\simeq\sO_{\mP^{n-3}}^{\oplus3}\oplus\sO_{\mP^{n-3}}(-1)$
and also $\Lrho_F^{*}\sO_{\mP(\,\frQ\,\,)}(1)\vert_{\delta}\simeq\sO_{\mP^{n-3}}(-1)$.
By restricting the exact sequence (\ref{eq:exact-seq-T2}), we obtain
\[
\frR\vert_{\delta}\to\sO_{\mP^{n-3}}^{\oplus3}\oplus\sO_{\mP^{n-3}}(-1)\to\sO_{\mP^{n-3}}(-1)\to0,\]
which shows that there is a surjection $\frR\vert_{\delta}\to\sO_{\mP^{n-3}}^{\oplus3}$
with its kernel being an invertible sheaf $\sL$. Note that $\det\frR\simeq\sO_{\hcoY_{2}}(-L_{\hcoY_{2}}-F_{\rho})$
from (\ref{eq:exact-seq-T2}). Now, since $L_{\hcoY_{2}}\vert_{\delta}=\sO_{\delta}(1)$
by definition and also $F_{\rho}|_{\delta}=\sO_{\mP^{n-3}}(-1)$, we have $\det\,\frR\vert_{\delta}\simeq\sO_{\delta}$
and $\sL\simeq\sO_{\delta}$. Hence $\frR\vert_{\delta}\simeq\sO_{\delta}^{\oplus4}$.
\end{proof}

Now we define $\frQ_2:=\frR^{*}$. From Lemmas \ref{lem:desbundle} and 
\ref{lem:T2-sum}, we have the following proposition (and definition):
\begin{prop}
\label{def:tidelT} There exists a locally free sheaf $\widetilde{\frQ}$
on $\widetilde{\hcoY}$ such that \[
\frQ_2=\tLrho_{\hcoY_{2}}^{\;*}\widetilde{\frQ}.\]
 
\end{prop}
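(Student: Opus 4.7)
The statement follows almost immediately by combining the two preceding lemmas, so the plan is essentially to verify that the hypotheses of Lemma \ref{lem:desbundle} apply to $\frQ_2$ with respect to the blow-up $\tLrho_{\hcoY_2}\colon \hcoY_2\to\widetilde{\hcoY}$.

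First I would record that $\tLrho_{\hcoY_2}\colon \hcoY_2\to\widetilde{\hcoY}$ is, by the construction recalled in Subsection \ref{sub:BirGeom}, the blow-up of the smooth variety $\widetilde{\hcoY}$ along the smooth subvariety $G_\rho\simeq\mP(\ft{S}^2\sQ_\rho^{\,*})$ of codimension $n-2$; its exceptional divisor is $F_\rho$, whose nontrivial fibers over $G_\rho$ are precisely the $\delta\simeq\mP^{n-3}$ appearing in Lemma \ref{lem:T2-sum}. Meanwhile, $\frQ_2=\frR^{*}$ is locally free on $\hcoY_2$ by construction (the dual of the locally free sheaf $\frR$ built from the exact sequence \eqref{eq:exact-seq-T2}).

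Next I would check the fiber triviality hypothesis of Lemma \ref{lem:desbundle}. By Lemma \ref{lem:T2-sum}, $\frR|_\delta\simeq\sO_\delta^{\oplus 4}$ for every nontrivial fiber $\delta$ of $\tLrho_{\hcoY_2}$, and dualizing gives $\frQ_2|_\delta=\frR^{*}|_\delta\simeq\sO_\delta^{\oplus 4}$. Hence the hypotheses of Lemma \ref{lem:desbundle} are satisfied by the sheaf $\sE=\frQ_2$ and the map $f=\tLrho_{\hcoY_2}$, and the lemma yields a locally free sheaf $\widetilde{\frQ}$ on $\widetilde{\hcoY}$ with $\frQ_2=\tLrho_{\hcoY_2}^{\;*}\widetilde{\frQ}$, as desired.

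There is essentially no obstacle beyond the preceding two lemmas: the entire content is packaged into Lemma \ref{lem:T2-sum} (which does the local computation to show triviality of $\frR$ along the contracted fibers) and Lemma \ref{lem:desbundle} (which is the general descent criterion). The one point worth being careful about is that the nontrivial fibers of $\tLrho_{\hcoY_2}$ coincide with the fibers of $F_\rho\to G_\rho$ used in the statement of Lemma \ref{lem:T2-sum}; this is precisely the identification provided by the description of $\tLrho_{\hcoY_2}$ as a blow-up with exceptional divisor $F_\rho$ recorded in Subsection \ref{sub:BirGeom} (and \cite[Prop.~4.22]{Geom}).
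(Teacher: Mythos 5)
Your proposal is correct and follows exactly the route the paper takes: the paper derives this proposition directly from Lemma \ref{lem:desbundle} and Lemma \ref{lem:T2-sum}, with $\frQ_2=\frR^{*}$ trivial on the fibers $\delta\simeq\mP^{n-3}$ of the blow-up $\tLrho_{\hcoY_2}$ along $G_\rho$. Your added care in identifying the nontrivial fibers of $\tLrho_{\hcoY_2}$ with the fibers of $F_\rho\to G_\rho$ is a point the paper leaves implicit, but there is no substantive difference.
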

The following exact sequence will be used in our later calculations:
\begin{prop}
There exists the following exact sequence$:$

\begin{equation}
0\to\Lpi_{\hcoY_{2}}^{\;*}\frQ\to\frQ_2\to i_{*}\circ\Lrho_F^{*}\sO_{\mP(\,\frQ\,\,)}(-1)({F_{\rho}}|_{F_{\rho}})\to0.\label{eq:Eb}\end{equation}
\end{prop}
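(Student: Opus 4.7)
The plan is to derive (\ref{eq:Eb}) by dualizing the defining sequence (\ref{eq:exact-seq-T2}) of $\frR$ and identifying the resulting $\sE xt^1$ term via a standard local duality for sheaves supported on a Cartier divisor; the identification $\frQ_2=\frR^{*}$ then finishes the job.

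Concretely, applying $\sH om(-,\sO_{\hcoY_2})$ to
\[
0\to\frR\to\Lpi_{\hcoY_2}^{\;*}\frQ^{*}\to i_{*}\circ\Lrho_F^{\;*}\sO_{\mP(\,\frQ\,)}(1)\to 0
\]
produces a long exact sequence. Two vanishings collapse it: first, $\sH om(i_{*}\circ\Lrho_F^{\;*}\sO_{\mP(\,\frQ\,)}(1),\sO_{\hcoY_2})=0$ because the source is a torsion sheaf supported on the divisor $F_{\rho}$ while $\sO_{\hcoY_2}$ is torsion-free; second, $\sE xt^1(\Lpi_{\hcoY_2}^{\;*}\frQ^{*},\sO_{\hcoY_2})=0$ because $\Lpi_{\hcoY_2}^{\;*}\frQ^{*}$ is locally free. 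What remains is
\[
0\to\Lpi_{\hcoY_2}^{\;*}\frQ\to\frR^{*}\to\sE xt^1\bigl(i_{*}\circ\Lrho_F^{\;*}\sO_{\mP(\,\frQ\,)}(1),\sO_{\hcoY_2}\bigr)\to 0.
\]

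The key step is to identify this $\sE xt^1$ sheaf. For any smooth Cartier divisor $i\colon D\hookrightarrow X$ and any invertible sheaf $\sL$ on $D$, tensoring the Koszul resolution $0\to\sO_X(-D)\to\sO_X\to i_{*}\sO_D\to 0$ with a local extension of $\sL$ and then applying $\sH om(-,\sO_X)$ yields the standard isomorphism
\[
\sE xt^1(i_{*}\sL,\sO_X)\simeq i_{*}\bigl(\sL^{-1}\otimes\sO_D(D|_D)\bigr).
\]
Taking $D=F_{\rho}$, $X=\hcoY_2$, and $\sL=\Lrho_F^{\;*}\sO_{\mP(\,\frQ\,)}(1)$ converts the third term into $i_{*}\circ\Lrho_F^{\;*}\sO_{\mP(\,\frQ\,)}(-1)(F_{\rho}|_{F_{\rho}})$, which is exactly the right-hand term in (\ref{eq:Eb}). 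Substituting $\frQ_2=\frR^{*}$ delivers the stated short exact sequence.

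I do not anticipate any genuine obstacle: the argument is purely formal once (\ref{eq:exact-seq-T2}) is available, and the duality formula is of the same type already invoked (via \cite[III, Ex.~6.6]{Ha}) to verify that $\frR$ is locally free in the preceding proposition.
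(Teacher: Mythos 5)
Your proposal is correct and follows essentially the same route as the paper: both dualize the defining sequence (\ref{eq:exact-seq-T2}) of $\frR$ and identify the resulting $\sE xt^{1}$ term as $i_{*}\circ\Lrho_F^{\;*}\sO_{\mP(\,\frQ\,)}(-1)(F_{\rho}|_{F_{\rho}})$. The only cosmetic difference is that the paper evaluates $\sE xt^{1}(i_{*}\sL,\sO_{\hcoY_{2}})$ via Grothendieck--Verdier duality with $i^{!}\sO_{\hcoY_{2}}=\sO_{F_{\rho}}(F_{\rho}\vert_{F_{\rho}})[-1]$, whereas you use the equivalent Koszul-resolution computation for a Cartier divisor.
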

\begin{proof}
By taking $\mathcal{H}om(-,\sO_{\hcoY_{2}})$ of (\ref{eq:exact-seq-T2}), we obtain: \[
0\to\Lpi_{\hcoY_{2}}^{\;*}\frQ\to\frQ_2\to\sE xt_{\sO_{\hcoY_{2}}}^{1}(i_{*}\circ\Lrho_F^{*}\sO_{\mP(\,\frQ\,\,)}(1),\sO_{\hcoY_{2}})\to0.\]
 The claim follows by evaluating $\sE xt_{\sO_{\hcoY_{2}}}^{1}$ as
\[
\sE xt_{\sO_{\hcoY_{2}}}^{1}\big(i_{*}\circ\Lrho_F^{*}\sO_{\mP(\,\frQ\,\,)}(1),\sO_{\hcoY_{2}}\big)\simeq i_{*}\sE xt_{\sO_{F_{\rho}}}^{0}\big(\Lrho_F^{*}\sO_{\mP(\,\frQ\,\,)}(1),i^{*}\sO_{\hcoY_{2}}\otimes\omega_{F_{\rho}/\hcoY_{2}}),\]
where we use the Grothendieck-Verdier duality with $i^{!}\sQ_{\hcoY_{2}}=i^{*}\sO_{\hcoY_{2}}\otimes\omega_{F_{\rho}/\hcoY_{2}}[-1]=\sO_{F_{\rho}}(F_{\rho}\vert_{F_{\rho}})[-1]$.\textcolor{red}{{} }
\end{proof}
\vspace{0.5cm}

\subsection{Properties of $\sS^{*},\sQ$ restricted on $\Prt_{\rho}$ and $\Prt_{\sigma}$}

As in the last subsection, we identify $\Prt_{\rho}=\rF(n-3,n-2,V)$ with
the projective bundle $\mP(\frQ)$ with $\Lpi_{\sP}:\mP(\frQ)\to\rG(n-3,V)$.
We introduce two divisors on $\Prt_{\rho}$;\[
H_{\Prt_{\rho}}=\sO_{\mP(\frQ\ )}(1)\text{\;\;{and}}\;\; L_{\Prt_{\rho}}:=\pi_{\sP}^{*}\sO_{\rG(n-3,V)}(1).\]

\begin{prop}
\label{cla:mislead} $\sQ|_{\Prt_{\rho}}\simeq\sS^{*}(L_{\hcoY_{3}})|_{\Prt_{\rho}}$
and $\sQ|_{\Prt_{\sigma}}\simeq\sS^{*}(L_{\hcoY_{3}})|_{\Prt_{\sigma}}$. 

\end{prop}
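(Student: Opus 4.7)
The plan is to verify both isomorphisms by an explicit computation on each of $\Prt_\rho$ and $\Prt_\sigma$, using the tautological descriptions of $\sS$ from \cite[Prop.~4.1]{Geom} together with the Pl\"ucker filtration on $\wedge^2 \frQ$ induced by the universal flag; once these are in place, the isomorphisms reduce to determinant bookkeeping and the rank-$3$ identity $\wedge^2 W \simeq W^* \otimes \det W$.

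On $\Prt_\rho \simeq \rF(n-3, n-2; V)$, I would set $\sL := \sR_{n-2}/\sR_{n-3}$ (a line bundle) and $\sC := V/\sR_{n-2}$ (rank $3$). Since $\wedge^2 \sL = 0$, the exact sequence $0 \to \sL \to \frQ|_{\Prt_\rho} \to \sC \to 0$ induces
$$
0 \to \sL \otimes \sC \to \wedge^2 \frQ|_{\Prt_\rho} \to \wedge^2 \sC \to 0,
$$
and by \cite[Prop.~4.1]{Geom} the sub-bundle is exactly $\sS|_{\Prt_\rho} \simeq \sL \otimes \sC$, so that $\sQ|_{\Prt_\rho} \simeq \wedge^2 \sC$. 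Combined with $L_{\hcoY_3}|_{\Prt_\rho} = \det \frQ|_{\Prt_\rho} = \sL \otimes \det \sC$, this gives
$$
\sS^*(L_{\hcoY_3})|_{\Prt_\rho} \simeq \sC^* \otimes \sL^{-1} \otimes \sL \otimes \det \sC \simeq \sC^* \otimes \det \sC \simeq \wedge^2 \sC \simeq \sQ|_{\Prt_\rho}.
$$

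On $\Prt_\sigma \simeq \rF(n-3, n; V)$ I would run the symmetric computation with sub and quotient interchanged: take $\sB := \sR_n/\sR_{n-3}$ (rank $3$) and $\sN := V/\sR_n$ (line bundle). This time $\wedge^2 \sN = 0$, so the universal sequence $0 \to \sB \to \frQ|_{\Prt_\sigma} \to \sN \to 0$ induces $0 \to \wedge^2 \sB \to \wedge^2 \frQ|_{\Prt_\sigma} \to \sB \otimes \sN \to 0$, and \cite[Prop.~4.1]{Geom} now identifies $\sS|_{\Prt_\sigma} \simeq \wedge^2 \sB$, hence $\sQ|_{\Prt_\sigma} \simeq \sB \otimes \sN$. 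Using $(\wedge^2 \sB)^* \simeq \sB \otimes (\det \sB)^{-1}$ and $L_{\hcoY_3}|_{\Prt_\sigma} = \det \sB \otimes \sN$, the right-hand side collapses to $\sB \otimes \sN \simeq \sQ|_{\Prt_\sigma}$.

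The only delicate point is the identification of $\sS$ as the correct natural sub-bundle of $\wedge^2 \frQ$ on each subvariety---the ``tensor type'' piece $\sL \otimes \sC$ on $\Prt_\rho$ versus the ``wedge type'' piece $\wedge^2 \sB$ on $\Prt_\sigma$. Both identifications are read directly off the definitions of $\rho$- and $\sigma$-planes recalled in \cite[\S 4.1, Prop.~4.1]{Geom}; after that the determinant manipulations above are purely formal, so I do not expect any serious obstacle.
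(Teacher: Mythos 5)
Your proposal is correct, but it proves the proposition by a genuinely different mechanism than the paper. You work separately on each of $\Prt_\rho$ and $\Prt_\sigma$, using the two-step filtration of $\wedge^2\frQ$ induced by the universal flag ($0\to\sL\otimes\sC\to\wedge^2\frQ\to\wedge^2\sC\to0$ on $\Prt_\rho$, and its mirror image on $\Prt_\sigma$), identifying $\sS$ with the appropriate graded piece via \cite[Prop.~4.1]{Geom}, and then closing the argument with the rank-three identity $\wedge^2 W\simeq W^*\otimes\det W$ and the relation $L=\det\frQ$. The paper instead proves only the $\Prt_\rho$ case in detail and argues fiberwise: it invokes the non-degenerate symmetric pairing $\wedge^2(V/V_{n-3})\times\wedge^2(V/V_{n-3})\to\wedge^4(V/V_{n-3})\simeq L$, writes an explicit basis of the fiber $W_1$ of $\sS$, and observes that $W_1$ is its own annihilator under this pairing, so that $\sS\simeq\sQ^*\otimes L$ directly. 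The paper's route is more uniform — the Lagrangian property of the fibers of $\sS$ holds for both families of planes on the quadric $\rG(2,V/V_{n-3})$, so the same one-line argument covers $\Prt_\sigma$ — and it makes the underlying geometry (self-duality of the Plücker quadric) visible. Your route requires the separate case analysis and the correct identification of the ``tensor type'' versus ``wedge type'' graded piece on each stratum, but it buys you more: the intermediate identification $\sQ|_{\Prt_\rho}\simeq\wedge^2(\sR_V/\sR_{n-2})$ is exactly Proposition \ref{cla:det}~(1), which the paper has to prove separately by essentially the computation you carry out. Both identifications of $\sS$ that you flag as the delicate point are indeed available from \cite[Prop.~4.1]{Geom} (and are used elsewhere in the paper, e.g.\ in the proof of Proposition \ref{def:SQT1}), so there is no gap.
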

\begin{proof} 
Proofs of the both relations are similar, so we only prove the former. 
Take a point $[\rm{P}_{V_{n-2}/V_{n-3}}]$ of 
$\Prt_{\rho} \;\subset \mathrm{G}(3,\wedge^2 \frQ)$.
Let $W_1$ and $W_2$ be the fiber of 
$\sS^*$ and $\sQ$ at $[\rm{P}_{V_{n-2}/V_{n-3}}]$,
respectively. 
We compare the restrictions of
the universal exact sequence (\ref{eq:univ}) and its dual;
\[
0\to W_1 \to \wedge^2 V/V_{n-3} \to W_2 \to 0,
\]
\[
0\to (W_2)^* \to (\wedge^2 V/V_{n-3})^*\to (W_1)^* \to 0.
\]
Note that $\mP(W_1)=\rm{P}_{V_{n-2}/V_{n-3}}$.
Choose 
a basis $\{\bar{\bf e}_1, \bar{\bf e}_2, \bar{\bf e}_3, \bar{\bf e}_4\}$
of $V/V_{n-3}$ so that 
$V_{n-2}/V_{n-3} =\langle \bar{\bf e}_1\rangle$.
Then $W_1= 
\langle \bar{\bf e}_1\wedge \bar{\bf e}_2,
\bar{\bf e}_1\wedge \bar{\bf e}_3,
\bar{\bf e}_1\wedge \bar{\bf e}_4\rangle$.
By the non-degenerate pairing
$\wedge^2 V/V_{n-3}\times \wedge^2 V/V_{n-3}\to \wedge^4 V/V_{n-3}\simeq \mC$,
we may identify $\wedge^2 V/V_{n-3}$ and $(\wedge^2 V/V_{n-3})^*$.
Under this identification, we see from 
the explicit basis of $W_1$ that 
$(W_2)^*$ coincides with $W_1$
since an element of $(W_2)^*$ is nothing but 
an element of $(\wedge^2 V/V_{n-3})^*$ vanishing on $W_1$.
Since $\wedge^4 V/V_{n-3}$ is a fiber of $L_{\Prt_{\rho}}$ at 
$[\rm{P}_{V_{n-2}/V_{n-3}}]$,
we conclude that 
$\sS|_{\Prt_\rho}\simeq \sQ^*|_{\Prt_\rho}\otimes L_{\Prt_\rho}$.
\end{proof}
\begin{prop}
\label{cla:det} $\;$ 

\noindent {\rm (1)} $\sQ|_{\Prt_{\rho}}=\wedge^{2}(\sR_{V}/\sR_{n-2})\simeq(\sR_{V}/\sR_{n-2})^{*}(H_{\Prt_{\rho}}+L_{\Prt_{\rho}})$. 

\noindent {\rm (2)} $\det\sQ|_{\Prt_{\rho}}=2(H_{\Prt_{\rho}}+L_{\Prt_{\rho}})$.

\end{prop}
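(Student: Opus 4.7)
The plan is to restrict the universal sequence (\ref{eq:univ}) to $\Prt_\rho$, identify $\sQ|_{\Prt_\rho}$ as the second exterior power of a rank three bundle, and then apply standard determinant identities.

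First, I would globalize the fiber computation carried out in the proof of Proposition~\ref{def:SQT1}: at a point $[V_{n-3},V_{n-2}] \in \Prt_\rho$ one has $\sS|_{[V_{n-3},V_{n-2}]} \simeq (V/V_{n-2}) \wedge (V_{n-2}/V_{n-3})$, which gives the globalization
\[
\sS|_{\Prt_\rho} \simeq (\sR_V/\sR_{n-2}) \otimes (\sR_{n-2}/\sR_{n-3}).
\]
Since the pullback of $\wedge^2 \frQ$ to $\Prt_\rho = \rF(n-3,n-2;V)$ is $\wedge^2(\sR_V/\sR_{n-3})$, restricting (\ref{eq:univ}) to $\Prt_\rho$ and applying the fiberwise identification $\wedge^2 W/((W/\ell) \wedge \ell) \simeq \wedge^2(W/\ell)$ (with $W = V/V_{n-3}$, $\ell = V_{n-2}/V_{n-3}$) yields the first equality in (1):
\[
\sQ|_{\Prt_\rho} \simeq \wedge^2(\sR_V/\sR_{n-2}).
\]

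Next, because $\sR_V/\sR_{n-2}$ has rank three, the universal identity $\wedge^2 E \simeq E^* \otimes \det E$ for a rank three bundle $E$ reduces the second isomorphism in (1) to the computation $\det(\sR_V/\sR_{n-2}) \simeq \sO_{\Prt_\rho}(H_{\Prt_\rho}+L_{\Prt_\rho})$. This follows from three observations: $\det \sR_V$ is trivial as $\sR_V = V \otimes \sO$; from the relative Euler sequence (\ref{eq:exact-sequence-Rv}) we have $\sR_{n-2}/\sR_{n-3} \simeq \sO_{\mP(\frQ)}(-1)$, contributing $-H_{\Prt_\rho}$ to the determinant; and the Plücker line bundle on $\rG(n-3,V)$ gives $\det \sR_{n-3} = -L_{\Prt_\rho}$. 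Combining these yields $\det \sR_{n-2} = -(H_{\Prt_\rho}+L_{\Prt_\rho})$ and hence $\det(\sR_V/\sR_{n-2}) = H_{\Prt_\rho} + L_{\Prt_\rho}$.

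Finally, (2) is obtained either by taking determinants in the isomorphism of (1) via $\det(E^* \otimes \det E) = (\det E)^{r-1}$ with $r = 3$, or directly from $\det \wedge^2 E = (\det E)^{\binom{r-1}{k-1}} = (\det E)^2$ applied to $E = \sR_V/\sR_{n-2}$; in either case we obtain $\det \sQ|_{\Prt_\rho} = 2(H_{\Prt_\rho}+L_{\Prt_\rho})$. No real obstacle is expected; the only care needed is consistent bookkeeping of the sign of the Plücker line bundle and of the relative $\sO(1)$ under the conventions set up for $H_{\Prt_\rho}$ and $L_{\Prt_\rho}$.
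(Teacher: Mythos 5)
Your proposal is correct and follows essentially the same route as the paper: globalize the fiberwise computation from the proof of Proposition \ref{def:SQT1} to get $\sQ|_{\Prt_\rho}\simeq\wedge^2(\sR_V/\sR_{n-2})$, then apply $\wedge^2 E\simeq E^*\otimes\det E$ for a rank-three bundle together with the determinant of the sequence (\ref{eq:exact-sequence-Rv}). The only cosmetic difference is that you extract $\det(\sR_V/\sR_{n-2})=H_{\Prt_\rho}+L_{\Prt_\rho}$ by tracking $\det\sR_{n-3}$ and $\det\sR_{n-2}$, whereas the paper reads it off directly as $\wedge^4(\sR_V/\sR_{n-3})\otimes(\sR_{n-2}/\sR_{n-3})^*$; these are the same bookkeeping.
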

\begin{proof}(1) Since 
$\sQ\vert_{[V_{n-3},V_{n-2}]}=\wedge^{2}(V/V_{n-2})$ by the proof of 
Proposition \ref{def:SQT1}, and
$(\sR_{V}/\sR_{n-2})\vert_{[V_{n-3},V_{n-2}]}=V/V_{n-2}$, we have $\sQ\vert_{\Prt_{\rho}}=\wedge^{2}(\sR_{V}/\sR_{n-2})$.
By taking the determinant of (\ref{eq:exact-sequence-Rv}), we have
\[
\wedge^{3}(\sR_{V}/\sR_{n-2})\simeq\wedge^{4}(\sR_{V}/\sR_{n-3})\otimes(\sR_{n-2}/\sR_{n-3})^{*}\simeq L_{\Prt_{\rho}}\otimes\sO_{\mP(\frQ\,)}(1).\]
Therefore we obtain the isomorphism $\sQ\vert_{\Prt_{\rho}}=\wedge^{2}(\sR_{V}/\sR_{n-2})\simeq(\sR_{V}/\sR_{n-2})^{*}(H_{\Prt_{\rho}}+L_{\Prt_{\rho}}).$
The claim (2) follows from $\det\sQ\vert_{\Prt_{}}=\wedge^{3}(\wedge^{2}(\sR_{V}/\sR_{n-2}))\simeq(\wedge^{3}(\sR_{V}/\sR_{n-2}))^{\otimes2}$.
\end{proof}

\section{{\bf Lefschetz collection in $\sD^{b}(\widetilde{\hcoY})$}}
\label{section:Lefschetz-Y}

In this section, we assume that $n=3,4$.
Using the sheaves $\widetilde{\sS}_{L},\widetilde{\sQ},\,\widetilde{\frQ}$
introduced in Section \ref{sec:SheavesDef}, we construct
a Lefschetz collection in $\sD^{b}(\widetilde{\hcoY})$, which shows
an interesting duality between the (dual) Lefschetz collection obtained
in Theorem \ref{thm:Gvan1}. 

\subsection{Homological properties of $\widetilde{\sS}_{L},\widetilde{\sQ},\,\widetilde{\frQ}$}

\begin{thm}
\label{thm:digGvan}
Suppose $n=3,4$.
\begin{enumerate}[$(1)$]
\item
The ordered sequence
$\sO_{\widetilde{\hcoY}}$,
$\widetilde{\frQ}$, 
$\widetilde{\sS}^*_L$, or
$\widetilde{\sQ}$
is semi-orthogonal.
\item
Let
$\widetilde{\sA}$ or $\widetilde{\sB}$ be one of
the locally free sheaves 
$\sO_{\widetilde{\hcoY}}$,
$\widetilde{\frQ}$, 
$\widetilde{\sS}^*_L$, or
$\widetilde{\sQ}$ on $\widetilde{\hcoY}$.
Then 
\[
H^{\bullet}(\widetilde{\sA}^*\otimes \widetilde{\sB} (-t))=0 \ \text{for}\ 
\begin{cases}
1\leq t\leq 5: n=3,\\
t=6,7: n=3 \ \text{and} \ \widetilde{\sA}=\sO_{\widetilde{\hcoY}},\\
1\leq t \leq 9: n=4.
\end{cases}
\]
\end{enumerate}
\end{thm}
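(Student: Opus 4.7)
The plan is to reduce every cohomology computation on $\widetilde{\hcoY}$ to a Borel--Weil--Bott calculation on the Grassmann bundle $\hcoY_3=\rG(3,\wedge^2\frQ)\to\rG(n-3,V)$, via the birational diagram of Subsection \ref{sub:BirGeom}.

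First I would pull back along $\tLrho_{\hcoY_2}\colon\hcoY_2\to\widetilde{\hcoY}$. Since this is the smooth blow-up of $\widetilde{\hcoY}$ along $G_{\rho}$, the projection formula together with $R\tLrho_{\hcoY_2*}\sO_{\hcoY_2}\simeq\sO_{\widetilde{\hcoY}}$ gives
$$H^{\bullet}(\widetilde{\hcoY},\widetilde{\sA}^*\otimes\widetilde{\sB}(-t))=H^{\bullet}(\hcoY_2,\tLrho_{\hcoY_2}^{\;*}(\widetilde{\sA}^*\otimes\widetilde{\sB}(-t))).$$
By Propositions \ref{def:SQT1} and \ref{def:tidelT}, $\tLrho_{\hcoY_2}^{\;*}\widetilde{\sS}_{L}$ and $\tLrho_{\hcoY_2}^{\;*}\widetilde{\sQ}$ are pulled back from $\hcoY_3$ via $\Lrho_{\hcoY_2}$, whereas $\tLrho_{\hcoY_2}^{\;*}\widetilde{\frQ}=\frQ_2$ fits in the exact sequence \eqref{eq:exact-seq-T2} linking $\Lpi_{\hcoY_2}^{\;*}\frQ^{*}$ with a sheaf supported on the exceptional divisor $F_{\rho}$.

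Next I would push forward along $\Lrho_{\hcoY_2}\colon\hcoY_2\to\hcoY_3$, another smooth blow-up (along $\Prt_{\rho}$), so that sheaves pulled back from $\hcoY_3$ have their cohomology computed directly on $\hcoY_3$. The resulting tensors are of the form $\ft{\Sigma}^{\beta}\sS^{*}\otimes\ft{\Sigma}^{\gamma}\sQ^{*}$ twisted by suitable powers of $L_{\hcoY_3}$ and $M_{\hcoY_3}$, and hence are handled by a two-step application of Theorem \ref{thm:Bott}: first along the Grassmann bundle $\Lpi_{\hcoY_3}$, then along $\rG(n-3,V)\to\Spec\,\mC$. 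A Serre-duality trick on $\widetilde{\hcoY}$ in the spirit of Lemma \ref{cla:key1} will let me restrict to roughly half of the range of $t$; this is especially useful for the $1\le t\le 9$ range when $n=4$.

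The main obstacle will be the sheaf $\widetilde{\frQ}$, which is \emph{not} a pull-back from $\hcoY_3$. Each time it appears in $\widetilde{\sA}^{*}\otimes\widetilde{\sB}$, I must split the calculation into two pieces via \eqref{eq:exact-seq-T2} (or its dual \eqref{eq:Eb}): one Schur-type calculation on $\hcoY_3$, and a second one for a sheaf supported on $F_{\rho}\simeq\rF(n-3,n-2;V)\times_{\rG(n-2,V)}G_{\rho}$, where the restrictions of $\widetilde{\sS}_{L}$, $\widetilde{\sQ}$ and the exceptional twist must be re-expressed via Propositions \ref{cla:mislead} and \ref{cla:det} and Borel--Weil--Bott reapplied on the auxiliary projective bundle $\mP(\frQ)\to\rG(n-3,V)$. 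Organising this bookkeeping across the $16$ ordered pairs $(\widetilde{\sA},\widetilde{\sB})$ so that the sharp vanishing ranges in (2) emerge exactly — including the boundary phenomenon that forces the cut-off $t\le 5$ (resp.\ $t\le 7$) for $n=3$ when $\widetilde{\sA}\ne\sO_{\widetilde{\hcoY}}$ — is what will make the proof long, but each individual case will be a direct Borel--Weil--Bott computation.
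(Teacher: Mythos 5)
Your architecture for the range $1\le t\le 5$ matches the paper's: pull back to $\hcoY_2$ along the blow-up $\tLrho_{\hcoY_2}$, trade $\frQ_2$ for $\Lpi_{\hcoY_{2}}^{\;*}\frQ$ using the exact sequences (\ref{eq:exact-seq-T2}) and (\ref{eq:Eb}) (this is the paper's Lemma \ref{cla:Tn=4}), and then descend to $\hcoY_3$ where Borel--Weil--Bott applies fibrewise over $\rG(n-3,V)$. One point you should make explicit even in this range: by Proposition \ref{cla:M}, $\sO_{\hcoY_2}(-tM)\simeq\Lrho_{\hcoY_{2}}^{\;*}(-t\det\sQ+tL)\otimes\sO_{\hcoY_2}(tF_{\rho})$, so the descent to $\hcoY_3$ requires $R^{q}\Lrho_{\hcoY_{2}*}\sO_{\hcoY_2}(tF_{\rho})=0$ for $q>0$; by relative Kodaira vanishing this holds exactly for $t\le 5$ (Lemma \ref{cla:van2}), which is why the direct reduction cannot reach $t\ge 6$ and the duality step is forced rather than a convenience.

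The genuine gap is the ``Serre-duality trick in the spirit of Lemma \ref{cla:key1}'' that you invoke to cover the upper range ($6\le t\le 9$ for $n=4$, and $t=6,7$ for $n=3$). Lemma \ref{cla:key1} works on $\hchow$ because $K_{\hchow}=-5H+E_f$ with $E_f$ a projective bundle over $\mP(V)$, so the correction term supported on $E_f$ is killed by a fibrewise computation. On $\widetilde{\hcoY}$ one has $K_{\widetilde{\hcoY}}=-2(n+1)M_{\widetilde{\hcoY}}+(n-2)F_{\widetilde{\hcoY}}$, so one must kill the cohomology of $\sC^{*}((t-2(n+1))M_{\widetilde{\hcoY}}+iF_{\widetilde{\hcoY}})|_{F_{\widetilde{\hcoY}}}$ for $i=1,\dots,n-2$; but the contraction $F_{\widetilde{\hcoY}}\to G_{\hcoY}$ is not equidimensional, so no fibrewise argument is available. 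The paper's Propositions \ref{cla:key} and \ref{cla:key2} instead pass to the flattening $F^{(3)}\to\widehat{G}'$ of \cite[Subsect.~5.5]{Geom}, use that $\widehat{F}\to F_{\widetilde{\hcoY}}$ is finite from a variety with rational singularities, apply upper semicontinuity to reduce to the special reducible fibres $A\cup B$, and then need the explicit identifications of $F_{A}$, $(\widetilde{\sS}_{L}^{*})_{A}$, $\widetilde{\sQ}_{A}$, $\widetilde{\frQ}_{A}$ and their $B$-counterparts (Lemma \ref{lem:FA-FB-Lem} of the Appendix) together with a Mayer--Vietoris argument. This is the technically hardest part of the theorem and it is absent from your plan; without it the vanishing for $t\ge 6$ is not established.
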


 The rest of this section is devoted to our proof of Theorem \ref{thm:digGvan},
where we compute the cohomology groups $H^{\bullet}(\widetilde{\sA}^*\otimes \widetilde{\sB} (-t))$ for $n=3$ and $1\leq t\leq 7$ and for
$n=4$ and $1\leq t\leq 9$. 
Our strategy is to reduce the computations of cohomology
groups on $\widetilde{\hcoY}$ to those on $\hcoY_{3}$ and use 
Theorem \ref{thm:Bott} for the $\mathrm{G}(3,6)$-bundle $\hcoY_{3}\to\rG(n-3,V)$.

Let $\widetilde{\frak{F}}$ be a locally free sheaf on $\widetilde{\hcoY}$
and $\frak{F}_2:=\tLrho_{\hcoY_{2}}^{\;*}\widetilde{\frak{F}}$. Since
$\tLrho_{\hcoY_{2}}\colon\hcoY_{2}\to\widetilde{\hcoY}$ is a blow-up
of a smooth variety, it holds that \begin{equation}
H^{\bullet}(\widetilde{\hcoY},\widetilde{\frak{F}}(-t))\simeq H^{\bullet}(\hcoY_{2},{\frak{F}}_2(-t)),\label{eq:rhs}\end{equation}
 where $(-t)$ on the right hand side represents the twist by $\sO_{\widetilde{\hcoY_{2}}}(-tM_{\widetilde{\hcoY_{2}}})$.
Therefore it suffices to compute
$H^{\bullet}({\sA_2}^*\otimes {\sB_2} (-t))$.

\subsection{Divisors on $\hcoY_2$}

Recall the universal sequence (\ref{eq:univ}) of the Grassmann bundle $\hcoY_{3}=\rG(3,\wedge^2 \mathfrak{Q})$.
Taking the determinant and using $\wedge^{6}(\wedge^2 \mathfrak{Q})=
\sO_{\hcoY_3}(3)$,
we have \begin{equation}
\det\sQ=\det\sS^{*}+3L_{\hcoY_{3}}=\det\{\sS^{*}(L_{\hcoY_{3}})\}.\label{eq:detdiff}\end{equation}
 Also, since $T_{\hcoY_{3}/\mP(V)}=\sS^{*}\otimes\sQ$ (see \cite[p.435]{Ful}),
we have \begin{equation}
K_{\hcoY_{3}}=-\det(\sQ\otimes\sS^{*})-(n+1)L_{\hcoY_{3}}=-3(\det\sQ+\det\sS^{*})-(n+1)L_{\hcoY_{3}}=-6\det\sQ+(8-n)L_{\hcoY_{3}},\label{eq:KY3}\end{equation}
 where we note $\rank\sS=\rank\sQ=3$ and we use (\ref{eq:detdiff})
in the last equality.

We now see some relations among divisors on $\hcoY_{2}$. Note that
\begin{equation}
K_{\hcoY_{2}}=\Lrho_{\hcoY_{2}}^{\;*}K_{\hcoY_{3}}+5F_{\rho}\label{eq:adj}\end{equation}
 since $\Lrho_{\hcoY_{2}}$ is the blow-up along a smooth subvariety
of codimension $6$. By this and (\ref{eq:KY3}), we have \begin{equation}
K_{\hcoY_{2}}=-6\Lrho_{\hcoY_{2}}^{\;*}\det\sQ+(8-n)L_{\hcoY_{2}}+5F_{\rho}.\label{eq:canY2}\end{equation}

\begin{prop}
 \label{cla:M} The pull-back $M_{\hcoY_{2}}$ of $\sO_{\Hes}(1)$
is given by \[
M_{\hcoY_{2}}=\Lrho_{\hcoY_{2}}^{\;*}(\det\sQ)-L_{\hcoY_{2}}-F_{\rho}.\]
 
\end{prop}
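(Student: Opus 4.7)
The plan is to express $M_{\hcoY_2}$ by identifying the sub-linear system on $\hcoY_3$ that realizes the rational map to $\Hes$, and then resolving its base locus along $\Prt_\rho$ by pulling back via the blowup $\Lrho_{\hcoY_2}$. Since $\Lrho_{\hcoY_2}\colon \hcoY_2 \to \hcoY_3$ is a blowup along the smooth codimension-$6$ center $\Prt_\rho$, one has $\Pic\hcoY_2 = \Lrho_{\hcoY_2}^*\Pic\hcoY_3 \oplus \mZ\cdot F_\rho$, so $M_{\hcoY_2}$ is determined by its coefficients with respect to $\Lrho_{\hcoY_2}^*\det\sQ$, $L_{\hcoY_2}$, and $F_\rho$.

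First I would identify the Plücker class on $\hcoY_3$: using $\bar\sS|_{\hcoY_3} = \sS(-L_{\hcoY_3})$ together with (\ref{eq:detdiff}), the pullback of $\sO_{\rG(3,\wedge^{n-1}V)}(1)=\det\bar\sS^*$ via the morphism $\hcoY_3 \to \overline{\hcoY}' \subset \rG(3,\wedge^{n-1}V)$ equals $\det\sS^* + 3L_{\hcoY_3} = \det\sQ$. Next I would trace the composition $\overline{\hcoY}' \dashrightarrow \hcoY \to \Hes$: a point $(V_{n-3},\bar U) \in \hcoY_3$ maps to the rank-$\le 4$ quadric on $V$ with kernel containing $V_{n-3}$ and with induced rank-$\le 4$ quadric on $V/V_{n-3}$ encoded, via the Plücker quadric on $\wedge^2(V/V_{n-3})$, by the $3$-plane $\bar U$. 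Following this description carefully, with $\wedge^{n-3}V_{n-3}=\sO_{\mP(V)}(-1)|_{[V_{n-3}]}$ contributing a compensating twist by $L_{\hcoY_3}^{-1}$, the hyperplane class $M_\Hes$ pulls back to the Weil-divisor class $\det\sQ - L_{\hcoY_3}$ on $\hcoY_3$ away from the indeterminacy locus $\Prt_\rho$.

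Pulling back to $\hcoY_2$ via $\Lrho_{\hcoY_2}$ and subtracting the multiplicity-$1$ fixed component along $F_\rho$ then gives $M_{\hcoY_2} = \Lrho_{\hcoY_2}^*\det\sQ - L_{\hcoY_2} - F_\rho$. The cleanest way to verify both the $-L_{\hcoY_3}$ twist and the $-F_\rho$ multiplicity is to compare with the independently computed canonical class (\ref{eq:canY2}): combining $K_\Hes = -10 M$ (valid both when $\Hes=\mP^9$ for $n=3$ and when $\Hes\subset \mP^{14}$ is a quintic hypersurface for $n=4$) with the Riemann--Hurwitz formula for the double cover $\hcoY\to\Hes$ branched along $\nS_3$, with the discrepancy of $\Lrho_{\widetilde\hcoY}$ from \cite[\S 4.6, Prop.~4.22]{Geom}, and with the blowup formula for $\tLrho_{\hcoY_2}$ along the codimension-$(n-2)$ center $G_\rho$, one obtains a second expression for $K_{\hcoY_2}$ in the basis $\{\Lrho_{\hcoY_2}^*\det\sQ,\,L_{\hcoY_2},\,F_\rho\}$; equating it with (\ref{eq:canY2}) forces the claimed formula for $M_{\hcoY_2}$.

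The main obstacle is pinning down the coefficient $-1$ of $F_\rho$. The direct geometric route requires a local analysis of the rational map $\hcoY_3 \dashrightarrow \Hes$ near a generic point of $\Prt_\rho \simeq \mP(\frQ)$, where one must show that $|\det\sQ - L_{\hcoY_3}|$ vanishes to order exactly $1$ along $\Prt_\rho$; the canonical-class route must instead handle the fact that the branch locus $\nS_3 \subset \Hes$ is not Cartier when $n=4$ (so Riemann--Hurwitz has to be applied in the Weil-divisor sense) and must carefully translate the exceptional divisors $F_{\widetilde\hcoY}$ and the exceptional of $\tLrho_{\hcoY_2}$ into the common basis on $\hcoY_2$ through the (anti-)flip $\hcoY_3 \dashrightarrow \widetilde\hcoY$.
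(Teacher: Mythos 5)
Your overall strategy---realizing the map to $\Hes$ by a subsystem of $|\det\sQ-L_{\hcoY_3}|$ on $\hcoY_3$ with base locus $\Prt_{\rho}$ and resolving it by $\Lrho_{\hcoY_2}$---is the same in spirit as the paper's, and your identification of the Pl\"ucker class as $\det\sS^{*}+3L_{\hcoY_3}=\det\sQ$ is correct. But the one step that carries the actual content of the proposition, namely that the fixed component along $F_{\rho}$ occurs with multiplicity exactly $1$, is precisely what you defer as ``the main obstacle,'' and neither of your two proposed ways of settling it is carried out. Moreover, the fallback you lean on---comparison with the canonical class---cannot work as stated: the second expression for $K_{\hcoY_2}$ obtained from $K_{\hcoY}=-2(n+1)M_{\hcoY}$ via $\widetilde{\hcoY}$ is $K_{\hcoY_2}=-2(n+1)M_{\hcoY_2}+(n-2)F'_{\widetilde{\hcoY}}+(n-3)F_{\rho}$, which involves the strict transform $F'_{\widetilde{\hcoY}}$ as a \emph{second} unknown class in the basis $\{\Lrho_{\hcoY_{2}}^{\;*}\det\sQ,\,L_{\hcoY_{2}},\,F_{\rho}\}$. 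In the paper that class is computed only afterwards (Proposition \ref{prop:F'}) by feeding Proposition \ref{cla:M} into exactly this comparison; equating with (\ref{eq:canY2}) therefore yields one relation between two unknown divisor classes and determines neither. As written, your verification is circular or underdetermined.

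The paper closes the gap with a single structural observation that your write-up misses: by the decomposition \cite[(4.5)]{Geom}, $\ft{S}^{2}\frQ={\Lpi_{\UU}}_{*}\sO_{\mP(\ft{S}^{2}\frQ^{*})}(1)$ is a direct summand of $\wedge^{3}(\wedge^{2}\frQ^{*})={\Lpi_{\hcoY_{3}}}_{*}\sO_{\mP(\wedge^{3}(\wedge^{2}\frQ))}(1)$, so $\hcoY_{3}\dashrightarrow\UU=\mP(\ft{S}^{2}\frQ^{*})$ is a \emph{relative linear projection} of projective bundles over $\rG(n-3,V)$. For a linear projection resolved by blowing up its smooth reduced center, the pullback of the target hyperplane class equals the source hyperplane class minus the exceptional divisor, with coefficient exactly one and no local analysis required; this gives $\Lrho_{\hcoY_{2}}^{\;*}(\det\sS^{*})-F_{\rho}=q_{\hcoY_{2}}^{\;*}(M_{\UU}-2L_{\UU})=M_{\hcoY_{2}}-2L_{\hcoY_{2}}$, and (\ref{eq:detdiff}) converts $\det\sS^{*}$ into $\det\sQ-3L_{\hcoY_{2}}$ to yield the stated formula. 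If you insist on your direct route through $\Hes$, you must actually carry out the local computation of the order of vanishing of the relevant system along a generic point of $\Prt_{\rho}$; until then the proof is incomplete at its decisive point.
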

\begin{proof} 
Note that 
${\Lpi_{\hcoY_3}}_*\sO_{\mP(\wedge^3 (\wedge^2 \frQ\,\,))}(1)$ 
$=\wedge^3 (\wedge^2 \frQ^*)$, and  
${\Lpi_{\UU}}_*\sO_{\mP({\sf S}^2 \frQ^{\,\,*})}(1)$ 
$={\sf S}^2 \frQ$.
Therefore, by the decomposition \cite[(4.5)]{Geom}
and the construction of $\hcoY_3\dashrightarrow \UU$
as a relative linear projection,
we have
\[
\Lrho_{\hcoY_2}^{\;*}(\det \sS^*)-F_{\rho}
=q_{\hcoY_2}^{\;*}(M_{\UU}-2L_{\UU})=
M_{\hcoY_2}-2L_{\hcoY_2}.
\]
Then
we have the assertion
by (\ref{eq:detdiff}).\end{proof}

\begin{prop}
\label{cor:weakFano}
$\hcoY_2$ is a weak Fano manifold, namely, $-K_{\hcoY_2}$ is nef and big.
\end{prop}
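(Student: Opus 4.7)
The plan is to rewrite $-K_{\hcoY_{2}}$ as a nonnegative combination of three divisors each of which is a pull-back of an ample class under a morphism to a projective variety; nefness will then follow termwise, and bigness from one of the contributions being big.

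Rearranging Proposition~\ref{cla:M} as $F_{\rho} = \Lrho_{\hcoY_{2}}^{*}(\det\sQ) - L_{\hcoY_{2}} - M_{\hcoY_{2}}$ and substituting into (\ref{eq:canY2}) eliminates the $F_{\rho}$-term and yields
\[
-K_{\hcoY_{2}} = 5\,M_{\hcoY_{2}} + (n-3)\,L_{\hcoY_{2}} + \Lrho_{\hcoY_{2}}^{*}(\det\sQ).
\]
For $n=3,4$ all coefficients are nonnegative. Next, $M_{\hcoY_{2}}$ is the pull-back of the very ample $\sO_{\Hes}(1)$ along the composition $\hcoY_{2}\to\widetilde{\hcoY}\to\hcoY\to\Hes$ of Subsection~\ref{sub:BirGeom}; since the first two maps are birational and the third is a double cover, the composition is generically $2$-to-$1$, and $M_{\hcoY_{2}}$ is therefore both nef and big. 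The class $L_{\hcoY_{2}}$ is nef by construction, being the pull-back of the ample Pl\"ucker class on $\rG(n-3,V)$.

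The only mildly delicate step is to identify $\det\sQ$ on $\hcoY_{3}$ as a pull-back of an ample class. Using $\rank\frQ = 4$ to compute $\det(\wedge^{2}\frQ) = 3L_{\rG(n-3,V)}$, together with the identification of $\sS(-L_{\hcoY_{3}})$ as a pull-back of $\bar{\sS}$, and then taking determinants in the universal sequence (\ref{eq:univ}), one obtains $\det\sQ = \det\bar{\sS}^{*}$, i.e., the Pl\"ucker polarization of $\rG(3,\wedge^{n-1}V)$. Via the small contraction $\hcoY_{3}\to\overline{\hcoY}'\hookrightarrow\rG(3,\wedge^{n-1}V)$ recalled in Subsection~\ref{sub:BirGeom}, this class is the pull-back of an ample divisor, hence nef on $\hcoY_{3}$; pulling back further along $\Lrho_{\hcoY_{2}}$ preserves nefness. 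Combining the three contributions presents $-K_{\hcoY_{2}}$ as a sum of nef divisors including the big summand $5M_{\hcoY_{2}}$, so it is nef and big. I do not anticipate any genuine obstacle; everything beyond the decomposition is bookkeeping with universal sequences already present in Section~\ref{sec:SheavesDef}.
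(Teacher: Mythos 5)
Your proof is correct and follows essentially the same route as the paper: the identical decomposition $-K_{\hcoY_2}=5M_{\hcoY_2}+(n-3)L_{\hcoY_2}+\Lrho_{\hcoY_2}^{*}\det\sQ$ obtained from (\ref{eq:canY2}) and Proposition~\ref{cla:M}, with nefness read off termwise and bigness from $M_{\hcoY_2}$. The only (harmless) divergence is in the nefness of $\det\sQ$: the paper simply observes that $\sQ$ is globally generated as a quotient of $\Lpi_{\hcoY_3}^{*}\wedge^{2}\frQ$, whereas you identify $\det\sQ$ with the pull-back of the Pl\"ucker polarization of $\rG(3,\wedge^{n-1}V)$ via (\ref{eq:detdiff}) and the map to $\overline{\hcoY}$ — both are valid.
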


\begin{proof}
Note that $\det \sQ$ is nef since $\sQ$ is the image of 
the surjection from 
the globally generated bundle $\Lpi_{\hcoY_3}^*{\wedge^2 \mathfrak{Q}}$.
By (\ref{eq:canY2}) and Proposition \ref{cla:M},
we have
$-K_{\hcoY_2}=5M_{\hcoY_2}+(n-3)L_{\hcoY_2}+\Lrho_{\hcoY_2}^* \det \sQ$,
which is clearly nef, and is also big since so is 
$M_{\hcoY_2}$.
\end{proof}

\begin{prop}
\label{prop:F'}
Let $F'_{\widetilde{\hcoY}}$ be the strict transform of $F_{\widetilde{\hcoY}}$.
It holds that $F'_{\widetilde{\hcoY}}=2M_{\hcoY_2}-L_{\hcoY_2}-F_{\rho}$.
\end{prop}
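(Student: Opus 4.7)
The plan is to establish the claim by matching two computations of $K_{\hcoY_2}$: first via the explicit formula (\ref{eq:canY2}) together with Proposition \ref{cla:M}, and second via successive pullback of $K_\hcoY$ along $\Lrho_{\widetilde{\hcoY}}\colon \widetilde{\hcoY}\to\hcoY$ and $\tLrho_{\hcoY_2}\colon \hcoY_2\to \widetilde{\hcoY}$. Substituting $\Lrho_{\hcoY_2}^{\;*}\det\sQ = M_{\hcoY_2}+L_{\hcoY_2}+F_\rho$ (Proposition \ref{cla:M}) into (\ref{eq:canY2}) yields the compact form
\[
K_{\hcoY_2}=-6M_{\hcoY_2}-(n-2)L_{\hcoY_2}-F_\rho.
\]

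Next, I would identify $K_\hcoY$ via the double cover $\pi\colon\hcoY\to\Hes=\nS_4$. For $n=3$, $\Hes=\mP^9$ and the branch $\nS_3$ is a quartic hypersurface, so the standard double-cover formula yields $K_\hcoY=-8M_\hcoY$. For $n=4$, $\Hes\subset\mP^{14}$ is a quintic hypersurface with $K_\Hes=-10M_\Hes$ by adjunction; since $\nS_3\subset\Hes$ has codimension two, $\pi$ is \'etale in codimension one and $K_\hcoY\sim_{\mQ}\pi^{*}K_\Hes=-10M_\hcoY$ as $\mQ$-Cartier classes.

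Then, combining the discrepancy formula $K_{\widetilde{\hcoY}}=\Lrho_{\widetilde{\hcoY}}^{\;*}K_\hcoY+a\,F_{\widetilde{\hcoY}}$ with the blow-up formulae $K_{\hcoY_2}=\tLrho_{\hcoY_2}^{\;*}K_{\widetilde{\hcoY}}+(n-3)E_{\widetilde{\rho}}$ and $\tLrho_{\hcoY_2}^{\;*}F_{\widetilde{\hcoY}}=F'_{\widetilde{\hcoY}}+m\,E_{\widetilde{\rho}}$ (where $E_{\widetilde{\rho}}$ is the exceptional divisor of $\tLrho_{\hcoY_2}$ and $m=\mathrm{mult}_{G_\rho}F_{\widetilde{\hcoY}}$), and equating with Step 1, yields
\[
aF'_{\widetilde{\hcoY}} = (-6-c)M_{\hcoY_2}-(n-2)L_{\hcoY_2}-F_\rho-(am+n-3)E_{\widetilde{\rho}},
\]
with $c=-8$ for $n=3$ and $c=-10$ for $n=4$. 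For $n=3$, $E_{\widetilde{\rho}}=0$ (since $\hcoY_2\simeq\widetilde{\hcoY}$) and $L_{\hcoY_2}=0$, so this reduces to $aF'_{\widetilde{\hcoY}}=2M_{\hcoY_2}-F_\rho$; integrality of $F'_{\widetilde{\hcoY}}$ as a prime divisor forces $a=1$, and the claim follows. For $n=4$, combined with the independent determination $E_{\widetilde{\rho}}=2M_{\hcoY_2}-L_{\hcoY_2}$ discussed below and the constraints $a\in\mZ_{>0}$, $m\in\mZ_{\geq 0}$, the unique integral solution is $a=1$, $m=0$, yielding $F'_{\widetilde{\hcoY}}=2M_{\hcoY_2}-L_{\hcoY_2}-F_\rho$.

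The main obstacle is establishing the class of $E_{\widetilde{\rho}}$ for $n=4$. A fiber $C$ of the $\mP^1$-bundle $E_{\widetilde{\rho}}\to G_\rho$ satisfies $E_{\widetilde{\rho}}\cdot C=-1$ and $M_{\hcoY_2}\cdot C=0$ (since $M_{\hcoY_2}=\tLrho_{\hcoY_2}^{\;*}M_{\widetilde{\hcoY}}$ is pulled back along $\tLrho_{\hcoY_2}$), while $L_{\hcoY_2}\cdot C$ and $F_\rho\cdot C$ can be read off from the composition $\hcoY_2\to\hcoY_3\to\rG(n-3,V)$ using the flip description of Subsection \ref{sub:BirGeom}. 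Together with an analogous intersection test against a fiber of $F_\rho\to\Prt_\rho$, this determines the expansion of $E_{\widetilde{\rho}}$ in the basis $\{M_{\hcoY_2},L_{\hcoY_2},F_\rho\}$ of $\Pic(\hcoY_2)$ uniquely and yields $E_{\widetilde{\rho}}=2M_{\hcoY_2}-L_{\hcoY_2}$, completing the argument.
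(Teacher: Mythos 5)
Your overall strategy coincides with the paper's: equate the expression $K_{\hcoY_2}=-6M_{\hcoY_2}-(n-2)L_{\hcoY_2}-F_\rho$ coming from (\ref{eq:canY2}) and Proposition \ref{cla:M} with the expression obtained by pulling back $K_\hcoY$ through $\widetilde{\hcoY}$, and then solve for the class of $F'_{\widetilde{\hcoY}}$. Your double-cover computation of $K_\hcoY=-2(n+1)M_\hcoY$ is correct (and is a nice self-contained substitute for the paper's citation of \cite[Cor.~5.2]{Geom}), and the $n=3$ case of your argument goes through. However, for $n=4$ the argument contains two genuine errors. First, the exceptional divisor of $\tLrho_{\hcoY_2}\colon\hcoY_2\to\widetilde{\hcoY}$ is not a new divisor $E_{\widetilde{\rho}}$ of class $2M_{\hcoY_2}-L_{\hcoY_2}$: because $\hcoY_3\dashrightarrow\widetilde{\hcoY}$ is a flip with common resolution $\hcoY_2$, the single prime divisor $F_\rho$ is exceptional for \emph{both} contractions (it is the unique prime divisor contracted onto $G_\rho$, cf.\ Lemma \ref{lem:T2-sum} where the fibers of $F_\rho\to G_\rho$ are the $\mP^{n-3}$'s), so $E_{\widetilde{\rho}}=F_\rho$. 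Your intersection test cannot detect this: on a fiber $\delta$ of $F_\rho\to G_\rho$ one has $F_\rho\cdot\delta=-1$, $M\cdot\delta=0$ and $L\cdot\delta=1$, so both $F_\rho$ and $2M-L$ have degree $-1$ on $\delta$; two curve classes give only two linear conditions on the three coefficients, so the expansion is not "determined uniquely" as claimed.

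Second, the discrepancy $a$ of $F_{\widetilde{\hcoY}}$ over $\hcoY$ equals $n-2$ (so $a=2$ for $n=4$, quoted from \cite[Prop.~2.5, Cor.~5.2]{Geom}); $\Lrho_{\widetilde{\hcoY}}$ is an extremal divisorial contraction, not a smooth blow-up, and integrality does not force $a=1$. Indeed, with the correct $E_{\widetilde{\rho}}=F_\rho$ your equation reads $aF'=4M-2L-(am+2)F_\rho$, and both $a=1$ and $a=2$ yield integral classes, so your selection criterion fails; with $a=1$ one would get $F'=4M-2L-(m+2)F_\rho$, which is wrong for every $m$. Your proposal reaches the correct final formula only because the two errors cancel: substituting the false class $2M-L$ for $E_{\widetilde{\rho}}$ together with the false value $a=1$ reproduces what the true data $E_{\widetilde{\rho}}=F_\rho$, $a=2$ give. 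The paper avoids both issues by importing $a=n-2$ and the identification of $F_\rho$ as the exceptional divisor from \cite{Geom}, obtaining $(n-2)F'=(2n-4)M-(n-2)L-(n-2)F_\rho$ (using $G_\rho\not\subset F_{\widetilde{\hcoY}}$ to see that the strict transform equals the total transform, i.e.\ $m=0$), and then dividing by $n-2$ using the torsion-freeness of $\Pic\hcoY_2$ from Proposition \ref{cor:weakFano} — a step you would also need in order to pass from $aF'$ to $F'$.
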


\begin{proof}
By \cite[Prop.~2.5, Cor.~5.2]{Geom},
we have 
\begin{equation}
\label{eq:YY2}
K_{\widetilde{\hcoY}}=\Lrho_{\widetilde{\hcoY}}^*K_{\hcoY}+(n-2)F_{\widetilde{\hcoY}}=-2(n+1)M_{\widetilde{\hcoY}}+(n-2)F_{\widetilde{\hcoY}}.
\end{equation}
By \cite[Prop.~4.21]{Geom},
$\widetilde{\Lrho}_{\hcoY_2}\colon \hcoY_2\to \widetilde{\hcoY}$ is the blow-up along $G_{\rho}$.
Therefore 
\begin{equation}
\label{eq:blup}K_{\hcoY_2}=\widetilde{\Lrho}_{\hcoY_2}^*K_{\widetilde{\hcoY}}+(n-3)F_{\rho}.
\end{equation}
Since $G_{\rho}$ is not contained in $F_{\widetilde{\hcoY}}$,
we deduce from (\ref{eq:YY2}) and (\ref{eq:blup}) that
\[
K_{\hcoY_2}=-2(n+1)M_{{\hcoY}_2}+(n-2)F'_{\widetilde{\hcoY}}+(n-3)F_{\rho}.
\]
Combining this with (\ref{eq:canY2}) and Proposition \ref{cla:M},
we obtain
\[
(n-2)F'_{\widetilde{\hcoY}}=(2n-4)M_{\hcoY_2}-(n-2)L_{\hcoY_2}-(n-2)F_{\rho}.
\]
Since $\hcoY_2$ is a weak Fano manifold, $\Pic \hcoY_2$ is torsion-free.
Therefore we obtain the desired equality.

\end{proof}

\subsection{Case $\widetilde{\sA}=\widetilde{\frQ}$ or $\widetilde{\sB}=\widetilde{\frQ}$} 
Among 
$\sO_{{\hcoY}_2}$,
${\frQ}_2$, 
$\Lrho_{\hcoY_2}^*{\sS}(-L)$, and
$\Lrho_{\hcoY_2}^*{\sQ}$,
only ${\frQ}_2$ is not the pull-back of 
a locally free sheaf on $\hcoY_3$.
We will show that to compute $H^{\bullet}(\hcoY_2, \sA^*_2\otimes \sB_2(-t))$ for $1\leq t\leq 5$ and $\sA_2=\frQ_2$ or $\sB_2=\frQ_2$,
we may replace $\frQ_2$ by $\Lrho_{\hcoY_2}^*\frQ$.

\begin{lem}
\label{cla:Tn=4} 
With the notation as in Theorem $\ref{thm:digGvan}$,
it holds that
\begin{align}
H^{\bullet}(\frQ_2^*\otimes {\sB_2} (-t))\simeq 
H^{\bullet}(\Lpi_{\hcoY_{2}}^{\;*}\frQ^*\otimes {\sB_2} (-t)),\label{align:a}\\
H^{\bullet}(\sA_2^*\otimes \frQ_2 (-t))\simeq 
H^{\bullet}(\sA_2^*\otimes \Lpi_{\hcoY_{2}}^{\;*}\frQ (-t)),\label{align:b}\\
H^{\bullet}(\frQ_2^*\otimes \frQ_2 (-t))\simeq 
H^{\bullet}(\Lpi_{\hcoY_{2}}^{\;*}\frQ^*\otimes \Lpi_{\hcoY_{2}}^{\;*}\frQ (-t))\label{align:c}
\end{align}
for any $\bullet$ and $1\leq t\leq 5$.
Moreover, $(\ref{align:a})$ holds also for $t=0$ and $\sB_2=\sO_{\hcoY_2}$.
\end{lem}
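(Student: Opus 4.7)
The plan is to exploit the two exact sequences (\ref{eq:exact-seq-T2}) and (\ref{eq:Eb}), which interpolate between $\frQ_2$ (resp.~$\frQ_2^*$) and $\Lpi_{\hcoY_{2}}^{\;*}\frQ$ (resp.~$\Lpi_{\hcoY_{2}}^{\;*}\frQ^*$): in each case the third term is a sheaf of the form $i_*\circ\Lrho_F^*\sO_{\mP(\frQ)}(\pm 1)$, possibly twisted by $F_\rho|_{F_\rho}$, and is supported on the exceptional divisor $F_\rho$. Tensoring (\ref{eq:exact-seq-T2}) with $\sB_2(-t)$ gives the isomorphism (\ref{align:a}) from the long exact sequence of cohomology, provided the cohomology of the supported term (after tensoring with $\sB_2(-t)$) vanishes; similarly (\ref{align:b}) follows by tensoring (\ref{eq:Eb}) with $\sA_2^*(-t)$, and (\ref{align:c}) by applying both sequences in turn.

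Next, I would reduce each of these required vanishings to a statement on $F_\rho$ via the projection formula. By Proposition \ref{cla:M} and Proposition \ref{cla:det} we have
\[
M_{\hcoY_2}|_{F_\rho} \;=\; 2\,\Lrho_F^* H_{\Prt_\rho} + L_{F_\rho} - F_\rho|_{F_\rho},
\]
while $\Lrho_{\hcoY_2}^*\sS(-L)|_{F_\rho}$ and $\Lrho_{\hcoY_2}^*\sQ|_{F_\rho}$ are simply $\Lrho_F^*$-pullbacks of the corresponding restrictions to $\Prt_\rho$ (for which Propositions \ref{cla:mislead} and \ref{cla:det} apply), and $\frQ_2|_{F_\rho}$ sits in the restriction of (\ref{eq:Eb}) to $F_\rho$. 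Substituting these into each case, the required vanishing becomes a cohomology computation on $F_\rho$ of a sheaf of the form $\Lrho_F^*(\text{bundle on } \Prt_\rho) \otimes \sO_{F_\rho}(kF_\rho|_{F_\rho})$ for various explicit integers $k$ depending on $t$ and on $\sA_2,\sB_2$.

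Third, I would evaluate these cohomologies by iterating Theorem \ref{thm:Bott} along the two bundle structures $\Lrho_F\colon F_\rho \to \Prt_\rho$ (a $\mP^5$-bundle, since $\Prt_\rho$ has codimension $6$ in $\hcoY_3$) and $\pi_\sP\colon \Prt_\rho = \mP(\frQ) \to \rG(n-3,V)$. The range $1\leq t\leq 5$ will be precisely the window where, for every choice of $(\sA_2,\sB_2)$ involving $\frQ_2$, the exponent of $F_\rho|_{F_\rho}$ falls in the Borel-Weil-Bott vanishing zone for $\Lrho_F$. The additional case $t=0$, $\sB_2=\sO_{\hcoY_2}$ in (\ref{align:a}) would be handled separately: here only the $H^0$ part is relevant and one checks directly that the injection $\frQ_2^*\hookrightarrow\Lpi_{\hcoY_2}^*\frQ^*$ is an isomorphism on global sections.

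The main obstacle will be the case-by-case bookkeeping, especially when $\sA_2 = \sB_2 = \frQ_2$ in (\ref{align:c}), where a $4\times 4$ piece of the double reduction has to be tracked and the twist by $F_\rho|_{F_\rho}$ appears with the largest range of exponents; at the boundary values $t=1$ and $t=5$ the vanishing will be tight and will rely essentially on the explicit formulas of Proposition \ref{cla:M} together with the identifications of Proposition \ref{cla:mislead}.
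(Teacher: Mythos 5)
Your overall strategy is the one the paper follows: tensor the exact sequences (\ref{eq:exact-seq-T2}) and (\ref{eq:Eb}) with the relevant factor, reduce the three isomorphisms to the vanishing of cohomology of sheaves supported on $F_{\rho}$, and compute that cohomology fibrewise using Propositions \ref{cla:M}, \ref{cla:mislead} and \ref{cla:det}. However, one step of your third stage fails as stated: it is not true that for every $1\leq t\leq 5$ the restriction to the $\mP^{5}$-fibres of $\Lrho_F\colon F_{\rho}\to\Prt_{\rho}$ lands in the acyclic range. For (\ref{align:b}) and (\ref{align:c}) the sheaf to be killed is $\Lrho_F^{*}\sO_{\mP(\,\frQ\,)}(-1)\otimes\mathfrak{F}(-tM_{\hcoY_2}+F_{\rho})|_{F_{\rho}}$; since $M_{\hcoY_2}$ restricts to $\sO_{\mP^{5}}(1)$ and $F_{\rho}$ to $\sO_{\mP^{5}}(-1)$ on such a fibre, one gets direct sums of $\sO_{\mP^{5}}(-(t+1))$, and at $t=5$ this is $\sO_{\mP^{5}}(-6)$, which has nonvanishing $H^{5}$. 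The same boundary failure occurs for (\ref{align:a}) with $\sB_2=\frQ_2$ when $n=3$. The paper closes the case $t=5$ by Serre duality on $F_{\rho}$, which converts the group into the cohomology of an explicit sheaf pulled back from $\Prt_{\rho}$, and then applies the Leray spectral sequence for $\pi_{\sP}\colon\Prt_{\rho}=\mP(\frQ)\to\rG(n-3,V)$ together with Proposition \ref{cla:det}. Your fallback of iterating Bott through both bundle structures can recover this (the nonzero $R^{5}\Lrho_{F*}$ must be identified and pushed down further), but that is precisely where the real work lies, and your write-up treats it as automatic. Relatedly, in the ``moreover'' case $t=0$, $\sB_2=\sO_{\hcoY_2}$, the isomorphism is asserted in all cohomological degrees, so comparing global sections of the injection $\frQ_2^{*}\hookrightarrow\Lpi_{\hcoY_2}^{\;*}\frQ^{*}$ is not enough; one must compare the full long exact sequences.

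A second, smaller divergence: for $n=4$ the paper does not use $\Lrho_F$ at all in the proof of (\ref{align:a}). It uses the other fibration $F_{\rho}\to G_{\rho}$, whose fibres are $\mP^{1}$'s on which $M_{\hcoY_2}$ is trivial and every relevant sheaf restricts to a direct sum of $\sO_{\mP^{1}}(-1)$; this gives the vanishing for all $t$ at once (whence the $t=0$ case for $n=4$) and avoids computing $\frQ_2|_{F_{\rho}}$ on the $\mP^{5}$-fibres. Your uniform route through $\Prt_{\rho}$ is workable for $n=4$ as well, but at the cost of noticeably more bookkeeping, in particular an explicit determination of $\frQ_2|_{F_{\rho}}$ on the fibres of $\Lrho_F$ of the kind the paper carries out only for $n=3$ via the sequence (\ref{eq:extT}).
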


\begin{proof}~
 
\noindent {\bf Proof of (\ref{align:a}).}
By the exact sequence (\ref{eq:exact-seq-T2}), we have 
\begin{equation}
\label{eq:basic-a}
0\to\frQ_2^{*}\otimes\sB_2(-t)\to\Lpi_{\hcoY_{2}}^{\;*}\frQ^{*}\otimes\sB_2(-t)\to i_{*}\Lrho_F^{*}\sO_{\mP(\,\frQ\,\,)}(1)\otimes\sB_2(-t)|_{F_{\rho}}\to0.
\end{equation}
Therefore it suffices to show 
\begin{equation}
\label{eq:vanB_2}
\ensuremath{H^{\bullet}(F_{\rho},\Lrho_F^{*}\sO_{\mP(\,\frQ\,\,)}(1)\otimes
\sB_2(-t)|_{F_{\rho}})=0}.
\end{equation}
Note that this will imply 
for $\sB_2=\frQ_2$ that
\begin{equation} 
\label{eq:c}
H^{\bullet}(\frQ_2^*\otimes \frQ_2 (-t))\simeq 
H^{\bullet}(\Lpi_{\hcoY_{2}}^{\;*}\frQ^*\otimes \frQ_2(-t))\end{equation}

\noindent {\bf Case $n=4$.} In this case,
this vanishing holds for any $t$ by the Leray spectral sequence for 
$\tLrho_{\hcoY_{2}}|_{F_{\rho}}\colon F_{\rho}\to G_{\rho}$,
since $\tLrho_{\hcoY_{2}}|_{F_{\rho}}$ is a $\mP^{1}$-bundle and
the restriction of $\Lrho_F^{*}\sO_{\mP(\,\frQ\,\,)}(1)\otimes\sB_2(-t)|_{F_{\rho}}$
to the fiber is a direct sum of $\sO_{\mP^{1}}(-1)$ by the proof
of Lemma \ref{lem:T2-sum}. In particular, 
$(\ref{align:a})$ holds also for $t=0$ and $\sB_2=\sO_{\hcoY_2}$ in this case.

\vspace{5pt}

\noindent {\bf Case $n=3$.}
If  
$\sB_2=\sO_{{\hcoY}_2}$,
$\Lrho_{\hcoY_2}^*{\sS}(-L)$, and
$\Lrho_{\hcoY_2}^*{\sQ}$,
then the vanishing (\ref{eq:vanB_2}) holds
for $1\leq t\leq 5$
by the Leray spectral sequence for 
$\Lrho_F\colon 
F_{\rho}\to \Prt_{\rho}$ since 
$\Lrho_F$
is a $\mP^5$-bundle and
the restriction of 
$\Lrho_F^*\sO_{\mP(V)}(1)\otimes \sB_2(-t)|_{F_{\rho}}$
to its fiber is a direct sum of $\sO_{\mP^5}(-t)$.

Suppose $t=0$ and $\sB_2=\sO_{\hcoY_2}$.
Then (\ref{align:a}) holds by (\ref{eq:basic-a}) since $H^{\bullet}(V^*\otimes \sO_{\hcoY_2})$ and $H^{\bullet}(
\Lrho_F^* 
\sO_{\mP(V)}(1))$ are isomorphic to each other for $\bullet=0$
and are zero for $\bullet>0$.

Suppose $\sB_2=\frQ_2$. 
First we calculate the restriction of
$\frQ_2^*$ on $F_{\rho}$.
By restricting (\ref{eq:exact-seq-T2}) to $F_{\rho}$,
we obtain the exact sequence
\[
\frQ_2^*|_{F_{\rho}}\to V^*\otimes \sO_{F_{\rho}}\to 
\Lrho_F^* 
\sO_{\mP(V)}(1)\to 0.
\]
Since
the kernel of
$V^*\otimes \sO_{F_{\rho}}\to 
\Lrho_F^* 
\sO_{\mP(V)}(1)$ is isomorphic to 
$\Lrho_F^*\Omega_{\mP(V)}(1)$,
we have the exact sequence
\begin{equation}
\label{eq:extT*}
0\to \sL\to \frQ_2^*|_{F_{\rho}}\to 
\Lrho_F^*\Omega_{\mP(V)}(1)\to 0,
\end{equation}
where $\sL$ is an invertible sheaf on $F_{\rho}$.
Taking the determinants in (\ref{eq:exact-seq-T2}), we have
$\det \frQ_2^*=\sO_{\widetilde{\hcoY}}(-F_{\rho})\otimes \wedge^4 V^*$. 
Therefore
$\sL\simeq \sO_{F_{\rho}}(-F_{\rho})\otimes 
\Lrho_F^* 
\sO_{\mP(V)}(1)\otimes \wedge^4 V^*$.
Now dualizing (\ref{eq:extT*}), we obtain
\begin{equation}
\label{eq:extT}
0\to 
\Lrho_F^*T_{\mP(V)}(-1)
\to {\frQ}_2|_{F_{\rho}}\to
\sO_{F_{\rho}}(F_{\rho})\otimes 
\Lrho_F^*
\sO_{\mP(V)}(-1) \otimes \wedge^4 V
\to 0.
\end{equation}
Let $P$ be a fiber of
$\Lrho_F\colon F_{\rho}\to \Prt_{\rho}\simeq \mP(V)$.
We have the vanishing (\ref{eq:vanB_2}) for $1\leq t\leq 4$
since the restriction to $P$ of
$\frQ_2|_{F_{\rho}}$ is isomorphic to
$\sO_P^{\oplus 3}\oplus \sO_P(-1)$ by (\ref{eq:extT}).
Finally we consider the case where $t=5$.
By the Serre duality,
it suffices to show
the vanishing of
\[
H^{8-\bullet}(F_{\rho}, 
\Lrho_F^*\sO_{\mP(V)}(-1)
\otimes \frQ_2^*(-\det \sQ_2+F_{\rho})|_{F_{\rho}}),
\]
which can be seen by twisting
(\ref{eq:extT*}) with
$\Lrho_F^*\sO_{\mP(V)}(-1)
\otimes \sO_{F_{\rho}}(-\det {\sQ}_2+F_{\rho})$.

\noindent {\bf Proof of (\ref{align:b}) and (\ref{align:c}).}
By the exact
sequence (\ref{eq:Eb}), we have \[
0\to\Lpi_{\hcoY_{2}}^{\;*}\frQ\otimes{\frak{F}}(-t)\to\frQ_2\otimes{\frak{F}}(-t)\to i_{*}\Lrho_F^*\sO_{\mP(\,\frQ\,\,)}(-1)\otimes{\frak{F}}(-t{M}_{\hcoY_{2}}+F_{\rho})|_{F_{\rho}}\to0\]
 for a locally free sheaf $\frak{F}$ on $\hcoY_{2}$. It suffices to show the
vanishings of \begin{equation}
H^{\bullet}(F_{\rho},\Lrho_F^{*}\sO_{\mP(\,\frQ\,\,)}(-1)\otimes{\frak{F}}(-t{M}_{\hcoY_{2}}+F_{\rho})|_{F_{\rho}})\;\;(1\leq t\leq5)\label{eq:Frho}\end{equation}
with ${\frak{F}}=\sO_{\hcoY_{2}}$, $\Lrho_{\hcoY_{2}}^{\;*}\sQ^{*}$,
$\Lrho_{\hcoY_{2}}^{\;*}\sS(-L_{\hcoY_{2}})$
for (\ref{align:b}), and with $\Lpi_{\hcoY_{2}}^{\;*}\frQ^{*}$
for (\ref{align:c}) in view of (\ref{eq:c}).
Since $\sQ^{*}\vert_{\sP_{\rho}}\simeq\sS(-L_{\hcoY_{3}})\vert_{\sP_{\rho}}$
by Proposition \ref{cla:mislead}, we have only to consider the cases
${\frak{F}}=\sO_{\hcoY_{2}}$, $\Lrho_{\hcoY_{2}}^{\;*}\sQ^{*},$
$\Lpi_{\hcoY_{2}}^{\;*}\frQ^{*}$. For $1\leq t\leq4$, the vanishings
of (\ref{eq:Frho}) follow from the Leray spectral sequence for $\Lrho_F\colon F_{\rho}\to\Prt_{\rho}$
since $\Lrho_F$ is a $\mP^{5}$-bundle and the restriction of $\Lrho_F^{*}
\sO_{\mP(\,\frQ\,\,)}(-1)\otimes{\frak{F}}(-t{M}_{\hcoY_{2}}+F_{\rho})|_{F_{\rho}}$
to the fiber is a direct sum of $\sO_{\mP^{5}}(-(t+1))$ by Proposition
\ref{cla:M}. For $t=5$, note that (\ref{eq:Frho}) is Serre dual
to \begin{equation}
H^{\bullet'}(F_{\rho},\Lrho_F^{*}\sO_{\mP(\,\frQ\,\,)}(1)\otimes{\frak{F}}^{*}(\rho_{\hcoY_{2}}^{*}(-\det\sQ-L_{\hcoY_{3}}))|_{F_{\rho}})\label{eq:dammy}\end{equation}
with $\bullet'=8-\bullet$ for $n=3$ and $\bullet'=12-\bullet$ for $n=4$ 
by (\ref{eq:canY2}) and Proposition \ref{cla:M}. Since $\Lrho_{\hcoY_{2}}$
is the blow-up of a smooth variety and $\frak{F}=\Lrho_{\hcoY_{2}}^{\;*}{\frak{F}}_3$
with a locally free sheaf ${\frak{F}}_3$ on $\hcoY_{3}$,
each of (\ref{eq:dammy}) is isomorphic to \begin{equation}
H^{\bullet'}(\Prt_{\rho},\Lrho_F^*\sO_{\mP(\,\frQ\,\,)}(1)\otimes\frak{F}_3^{*}(-\det\sQ-L_{\hcoY_{3}})|_{\Prt_{\rho}}).\label{eq:dammy2}\end{equation}
 Using Proposition \ref{cla:det} (1) and (2), we can write \[
\sO_{\mP(\,\frQ\,\,)}(1)\otimes\frak{F}_3^{*}(-\det\sQ-L_{\hcoY_{3}})|_{\Prt_{\rho}}=\begin{cases}
\sO_{\Prt_{\rho}}(-H_{\Prt_{\rho}}\text{-- }3L_{\Prt_{\rho}})\text{ for }\frak{F}_3=\sO_{\hcoY_{3}}\\
(\sR_{V}/\sR_{n-2})^{*}(-2L_{\Prt_{\rho}})\text{ for }\frak{F}_3=\sQ^{*}\\
\pi_{\sP}^*\frQ(-H_{\Prt_{\rho}}\text{-- }3L_{\Prt_{\rho}})\text{ for }
\frak{F}_3=\Lpi_{\hcoY_{3}}^{\;*}\frQ^{*}.\end{cases}\]
We see that all $H^{\bullet'}(\Prt_{\rho},-)$'s of these sheaves vanish
when restricted to fibers of $\pi_{\sP}\colon\Prt_{\rho}=\mP(\frQ)\to\rG(n-3,V)$.
Hence, by the Leray spectral sequence for $\pi_{\sP}$, all of (\ref{eq:dammy2})\textcolor{red}{{}
}vanish, too. \end{proof}

\subsection{Case $0\leq t\leq 5$}

By using the following lemma, we can reduce our computations
of cohomology groups to those on $\hcoY_{3}$ in the case where $0\leq t\leq 5$:
\begin{lem}
\label{cla:van2} $\;$

\noindent{\rm (1)} $R^{q}\Lrho_{\hcoY_{2}*}\sO_{\hcoY_{2}}(tF_{\rho})=0$
for any $t\leq5$ and $q>0$. 

\noindent{\rm (2)} $\Lrho_{\hcoY_{2}*}\sO_{\hcoY_{2}}(tF_{\rho})=\sO_{\hcoY_{3}}$
for $t\geq0$. \end{lem}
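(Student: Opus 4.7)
The plan is to establish both parts by induction on $t$ from the standard blow-up short exact sequence
\[
0 \to \sO_{\hcoY_2}((t-1)F_\rho) \to \sO_{\hcoY_2}(tF_\rho) \to \sO_{F_\rho}(tF_\rho|_{F_\rho}) \to 0,
\]
combined with fibrewise cohomology on the exceptional divisor. Since $\Lrho_{\hcoY_2}$ is the blow-up of $\hcoY_3$ along the smooth subvariety $\Prt_\rho$ of codimension $6$, the restriction $\Lrho_F:=\Lrho_{\hcoY_2}|_{F_\rho}\colon F_\rho\to\Prt_\rho$ is a $\mP^5$-bundle, and the self-intersection formula identifies $\sO_{\hcoY_2}(F_\rho)|_{F_\rho}$ with the relative tautological sheaf, restricting to $\sO_{\mP^5}(-1)$ on each fibre. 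Hence $\sO_{F_\rho}(tF_\rho|_{F_\rho})$ restricts fibrewise to $\sO_{\mP^5}(-t)$, and combining the standard cohomology of $\mP^5$ with the Leray spectral sequence for $\Lrho_F$ yields: (a) $R^q\Lrho_{\hcoY_2*}\sO_{F_\rho}(tF_\rho|_{F_\rho})=0$ for every $q$ when $1 \leq t \leq 5$; (b) $\Lrho_{\hcoY_2*}\sO_{F_\rho}(tF_\rho|_{F_\rho})=0$ whenever $t \geq 1$; (c) $R^q\Lrho_{\hcoY_2*}\sO_{F_\rho}(tF_\rho|_{F_\rho})=0$ for $q > 0$ when $t \leq 0$, and the zeroth pushforward in that range is (pushed out to $\hcoY_3$) the symmetric power $\Sym^{-t}N^*_{\Prt_\rho/\hcoY_3}$.

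The base case $t=0$ is the classical vanishing $R^q\Lrho_{\hcoY_2*}\sO_{\hcoY_2}=0$ for $q>0$ together with $\Lrho_{\hcoY_2*}\sO_{\hcoY_2}=\sO_{\hcoY_3}$, valid for any blow-up of a smooth variety along a smooth centre. For the upward step $1 \leq t \leq 5$, applying $\Lrho_{\hcoY_2*}$ to the displayed sequence and invoking (a) yields isomorphisms $R^q\Lrho_{\hcoY_2*}\sO_{\hcoY_2}(tF_\rho) \simeq R^q\Lrho_{\hcoY_2*}\sO_{\hcoY_2}((t-1)F_\rho)$ for every $q$, settling both (1) and (2) in this range. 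To extend (2) to $t \geq 6$, property (b) still forces the analogous isomorphism on the zeroth pushforward alone, so $\Lrho_{\hcoY_2*}\sO_{\hcoY_2}(tF_\rho)=\sO_{\hcoY_3}$ for all $t \geq 0$ by induction.

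For part (1) with $t \leq -1$ I use downward induction with the shifted sequence $0 \to \sO_{\hcoY_2}(tF_\rho) \to \sO_{\hcoY_2}((t+1)F_\rho) \to \sO_{F_\rho}((t+1)F_\rho|_{F_\rho}) \to 0$. The vanishing $R^q \Lrho_{\hcoY_2*}\sO_{\hcoY_2}(tF_\rho)=0$ for $q \geq 2$ follows at once from (c) and the induction hypothesis at step $t+1$. The delicate point is $q = 1$: one is reduced to showing that the connecting map $\Lrho_{\hcoY_2*}\sO_{\hcoY_2}((t+1)F_\rho) \to \Lrho_{F*}\sO_{F_\rho}((t+1)F_\rho|_{F_\rho})$ is surjective. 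Running the induction simultaneously identifies the source as the ideal power $\sI_{\Prt_\rho}^{-(t+1)}$ and the target as the associated graded piece $\sI_{\Prt_\rho}^{-(t+1)}/\sI_{\Prt_\rho}^{-t} \simeq \Sym^{-(t+1)}N^*_{\Prt_\rho/\hcoY_3}$, so the map is the canonical quotient, which is surjective precisely because $\Prt_\rho$ is smooth in $\hcoY_3$. The main obstacle is thus organising this downward step: one must simultaneously track the identification of zeroth pushforwards with powers of $\sI_{\Prt_\rho}$ and use smoothness of the centre to propagate the $R^1$-vanishing.
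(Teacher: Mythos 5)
Your proof is correct, but it follows a genuinely different route from the paper's. The paper disposes of (1) in one line by relative Kodaira (Kawamata--Viehweg) vanishing: using the discrepancy formula $K_{\hcoY_{2}}=\Lrho_{\hcoY_{2}}^{\;*}K_{\hcoY_{3}}+5F_{\rho}$ already recorded in (\ref{eq:adj}), one has $tF_{\rho}-K_{\hcoY_{2}}\equiv_{\hcoY_{3}}(t-5)F_{\rho}$, which is $\Lrho_{\hcoY_{2}}$-nef and $\Lrho_{\hcoY_{2}}$-big exactly when $t\leq 5$ (as $-F_{\rho}$ restricts to $\sO(1)$ on the fibres of $F_{\rho}\to\Prt_{\rho}$ and the morphism is birational); part (2) is quoted as standard. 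You instead run an induction on $t$ through the exact sequences $0\to\sO_{\hcoY_2}((t-1)F_\rho)\to\sO_{\hcoY_2}(tF_\rho)\to\sO_{F_\rho}(tF_\rho|_{F_\rho})\to0$, using that $F_{\rho}\to\Prt_{\rho}$ is a $\mP^{5}$-bundle with $\sO(F_\rho)|_{F_\rho}$ of fibre degree $-1$. This is more elementary --- it needs only the cohomology of line bundles on $\mP^{5}$ and cohomology-and-base-change, not a vanishing theorem --- and it makes the sharpness of the bound $t\leq 5$ visible (at $t=6$ the fibre group $H^{5}(\mP^{5},\sO(-6))$ survives). The price is the extra bookkeeping in the downward induction for $t\leq -1$, where you must identify the zeroth pushforwards with the ideal powers $\sI_{\Prt_\rho}^{-t}$ and use $\Sym^{s}(\sI_{\Prt_\rho}/\sI_{\Prt_\rho}^{2})\simeq\sI_{\Prt_\rho}^{s}/\sI_{\Prt_\rho}^{s+1}$ (smoothness of the centre) to see that the relevant restriction map of pushforwards is the canonical surjection; this portion of your argument is essentially a re-proof of the classical statement $R^{q}\Lrho_{*}\sO(-sE)=0$ $(q>0,\ s\geq0)$ for blow-ups along smooth centres, which one could also simply cite. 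Both arguments are sound; the paper's is shorter given (\ref{eq:adj}), yours is self-contained and yields the extra identification of $\Lrho_{\hcoY_{2}*}\sO_{\hcoY_{2}}(tF_{\rho})$ for negative $t$.
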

\begin{proof}
(1) follows from the relative Kodaira vanishing theorem since $tF_{\rho}-K_{\hcoY_{2}}\equiv_{\hcoY_{3}}(t-5)F_{\rho}$
is $\Lrho_{\hcoY_{2}}$-nef and $\Lrho_{\hcoY_{2}}$-big if $t\leq5$.
(2) is a standard result. 
\end{proof}

In view of Lemma \ref{cla:Tn=4}, we will replace
$\frQ_2$ by $\Lrho_{\hcoY_2}^*\frQ$ if $\sA_2$ or $\sB_2=\frQ_2$,
and write $\sA_2$ or $\sB_2=\Lrho_{\hcoY_2}^*\frQ$.
Then, in any case, $\sA_2$ and $\sB_2$ are the pull-back of 
a locally free sheaf $\sA_3$ and $\sB_3$ on $\hcoY_3$.
Therefore, 
by the Leray spectral sequence for $\Lrho_{\hcoY_{2}}$,
and Proposition \ref{cla:M} and Lemma \ref{cla:van2}, we have the
following:

\begin{lem}
\label{cla:cohY2-Y3} 
\begin{equation}
H^{\bullet}(\hcoY_{2},\sA_2^*\otimes \sB_2(-t))\simeq 
H^{\bullet}(\hcoY_{3},\sA_3^*\otimes \sB_3(-t\det\sQ+tL_{\hcoY_{3}}))\;\;(0\leq t\leq 5).\label{eq:lastvan}\end{equation}
\end{lem}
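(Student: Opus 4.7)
The plan is to unwind the twist $\sO_{\hcoY_{2}}(-tM_{\hcoY_{2}})$ using Proposition \ref{cla:M}, separate out a $\Lrho_{\hcoY_{2}}$-pull-back factor via the projection formula, and then push forward by $\Lrho_{\hcoY_{2}}$ using Lemma \ref{cla:van2} to land on $\hcoY_{3}$.

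First, since $\sA_{2}=\Lrho_{\hcoY_{2}}^{*}\sA_{3}$ and $\sB_{2}=\Lrho_{\hcoY_{2}}^{*}\sB_{3}$ (recall we have replaced $\frQ_{2}$ by $\Lrho_{\hcoY_{2}}^{*}\frQ$ via Lemma \ref{cla:Tn=4}), Proposition \ref{cla:M} rewrites
\[
\sA_{2}^{*}\otimes\sB_{2}(-tM_{\hcoY_{2}})\;\simeq\;\Lrho_{\hcoY_{2}}^{*}\bigl(\sA_{3}^{*}\otimes\sB_{3}(-t\det\sQ+tL_{\hcoY_{3}})\bigr)\otimes\sO_{\hcoY_{2}}(tF_{\rho}).
\]

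Next, by the projection formula, the higher direct images satisfy
\[
R^{q}\Lrho_{\hcoY_{2}*}\bigl(\sA_{2}^{*}\otimes\sB_{2}(-tM_{\hcoY_{2}})\bigr)\;\simeq\;\sA_{3}^{*}\otimes\sB_{3}(-t\det\sQ+tL_{\hcoY_{3}})\otimes R^{q}\Lrho_{\hcoY_{2}*}\sO_{\hcoY_{2}}(tF_{\rho}).
\]
For $0\le t\le 5$, Lemma \ref{cla:van2} tells us that $R^{q}\Lrho_{\hcoY_{2}*}\sO_{\hcoY_{2}}(tF_{\rho})=0$ for $q>0$ and equals $\sO_{\hcoY_{3}}$ for $q=0$. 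Hence the higher direct images vanish in positive degree and the zeroth one is precisely $\sA_{3}^{*}\otimes\sB_{3}(-t\det\sQ+tL_{\hcoY_{3}})$.

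Finally, the Leray spectral sequence for $\Lrho_{\hcoY_{2}}\colon\hcoY_{2}\to\hcoY_{3}$ degenerates at $E_{2}$ and yields the claimed isomorphism of cohomology groups. There is no substantial obstacle: the content is entirely in the preparatory Proposition \ref{cla:M} (to identify $M_{\hcoY_{2}}$) and Lemma \ref{cla:van2} (to handle $\sO_{\hcoY_{2}}(tF_{\rho})$ under $\Lrho_{\hcoY_{2}*}$), both of which are already established; the lemma itself is a routine projection formula plus Leray argument. The constraint $t\le 5$ appears exactly because that is the range for which relative Kodaira vanishing gives $R^{q}\Lrho_{\hcoY_{2}*}\sO_{\hcoY_{2}}(tF_{\rho})=0$ in Lemma \ref{cla:van2} (1).
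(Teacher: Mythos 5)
Your proposal is correct and follows exactly the paper's own route: Proposition \ref{cla:M} to rewrite $-tM_{\hcoY_{2}}$ as $\Lrho_{\hcoY_{2}}^{*}(-t\det\sQ+tL_{\hcoY_{3}})+tF_{\rho}$, the projection formula together with Lemma \ref{cla:van2} to compute the direct images of $\sO_{\hcoY_{2}}(tF_{\rho})$, and the Leray spectral sequence to conclude. The paper states this in one line, so your write-up is simply a more explicit version of the same argument.
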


\noindent {\bf Proof of Theorem \ref{thm:digGvan} for $0\leq t\leq 5$.}
We have only to show the vanishing of
$H^{\bullet}(\hcoY_{3},\sA_3^*\otimes \sB_3(-t\det\sQ+tL_{\hcoY_{3}}))$
for $1\leq t\leq 5$ and $\sA_3, \sB_3=\sO_{\hcoY_3}, \frQ, \sS^*(L), \sQ$,
and for $t=0$ and $(\sA_3,\sB_3)=(\frQ,\sO_{\hcoY_3})$,
$(\sS^*(L),\sO_{\hcoY_3})$,
$(\sQ, \sO_{\hcoY_3})$,
$(\sS^*(L),\frQ)$,
$(\sQ,\frQ)$, or $(\sQ,\sS^*(L))$.

Let $G\simeq\mathrm{G}(3,6)$ be a fiber of $\Lpi_{\hcoY_{3}}$. 
Noting $L_{\hcoY_3}$ and $\frQ$ are the pull-backs of locally free sheaves
on $\hcoY_3$, we see that 
the restriction
of $\sA_3^*\otimes \sB_3(-t\det\sQ+tL_{\hcoY_{3}})$
to $G$ is a direct sum of the following sheaves:
\begin{equation}
\begin{aligned}
&\sO_G(-t)\\
&\sS|_G (-t),
\;\;\;
\sS^*|_G(-t),
\;\;\;
\\
&\sQ|_G (-t),
\;\;\;
\sQ^*|_G(-t),
\;\;\;\\
&\sS|_G\otimes \sQ|_G(-t),
\;\;\; 
\sS^*|_G\otimes \sQ^*|_G (-t),
\;\;\; 
\\
&\sS^*|_G\otimes \sS|_G (-t),
\;\;\;
\sQ^*|_G\otimes \sQ|_G (-t) 
\;\;\;  
\end{aligned}
\label{eq:coh-fiberG}
\end{equation} 

By Theorem \ref{thm:Bott}, all the cohomology
groups of the sheaves in $(\ref{eq:coh-fiberG})$ 
vanish for $0\leq t\leq 5$
except for
$t=0$ and 
$\sS^*|_G$,
$\sQ|_G$,
$\sS^*|_G\otimes \sS|_G$, or
$\sQ^*|_G\otimes \sQ|_G$.
Therefore we may show Theorem 
\ref{thm:digGvan} by the Leray spectral sequence for $\Lpi_{\hcoY_3}$.
$\hfill\square$

\hspace{5pt}

In the subsequent subsections, we will reduce
the proof of Theorem in the remaining cases to
the case where $1\leq t\leq 4$.

\subsection{Case $n=4$ and $6\leq t\leq 9$}

In this case, we have only to show the following:

\begin{prop}
\label{cla:key} For $\sC=\widetilde{\sA}^*\otimes \widetilde{\sB}$ as above, it holds \[
H^{\bullet}(\widetilde{\hcoY},\sC(-t))\simeq H^{13-\bullet}(\widetilde{\hcoY},\sC^{*}(t-10))\]
 for any integer $t$. 
\end{prop}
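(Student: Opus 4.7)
The plan is to imitate the proof of Lemma \ref{cla:key1}, which is the analogous statement on $\hchow$. The new wrinkle is that for $\widetilde{\hcoY}$ (of dimension $13$ when $n=4$) the discrepancy $K_{\widetilde{\hcoY}} + 10M$ equals $2F_{\widetilde{\hcoY}}$ rather than a single exceptional divisor, so we have to peel off the exceptional divisor twice.

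First I would apply Serre duality and use formula \eqref{eq:YY2} specialized to $n=4$, which reads $K_{\widetilde{\hcoY}} = -10M + 2F_{\widetilde{\hcoY}}$. This gives
$$H^{\bullet}(\widetilde{\hcoY}, \sC(-t)) \;\cong\; H^{13-\bullet}\!\left(\widetilde{\hcoY},\, \sC^{*}\bigl((t-10)M + 2F_{\widetilde{\hcoY}}\bigr)\right)^{*},$$
so the statement reduces to showing that twisting by $2F_{\widetilde{\hcoY}}$ does not change the cohomology of $\sC^{*}((t-10)M)$ (Serre duality then removes the outer $*$ up to taking complex-conjugate dimensions, exactly as in Lemma \ref{cla:key1}).

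Next I would remove the $2F_{\widetilde{\hcoY}}$ by peeling it off one divisor at a time through the two short exact sequences
\begin{equation*}
0 \to \sC^{*}((t-10)M + kF_{\widetilde{\hcoY}}) \to \sC^{*}((t-10)M + (k+1)F_{\widetilde{\hcoY}}) \to \sC^{*}((t-10)M + (k+1)F_{\widetilde{\hcoY}})|_{F_{\widetilde{\hcoY}}} \to 0
\end{equation*}
for $k = 0, 1$. This reduces everything to proving the restriction vanishing
$$H^{\bullet}\bigl(F_{\widetilde{\hcoY}},\; \sC^{*}((t-10)M + kF_{\widetilde{\hcoY}})|_{F_{\widetilde{\hcoY}}}\bigr) = 0 \qquad (k = 1, 2)$$
for every admissible $\sC = \widetilde{\sA}^{*}\otimes \widetilde{\sB}$ and every integer $t$.

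The hard part will be this restriction vanishing, and I would attack it by pulling everything back along the blow-up $\tLrho_{\hcoY_{2}}\colon \hcoY_{2}\to \widetilde{\hcoY}$ along $G_{\rho}$, so that the calculation moves from $F_{\widetilde{\hcoY}}$ to its strict transform $F'_{\widetilde{\hcoY}} \subset \hcoY_{2}$. By Proposition \ref{prop:F'} we have $F'_{\widetilde{\hcoY}} = 2M_{\hcoY_{2}} - L_{\hcoY_{2}} - F_{\rho}$, and all the bundles $\widetilde{\sS}_{L}, \widetilde{\sQ}, \widetilde{\frQ}$ pull back to explicit sheaves expressible in terms of $\sS, \sQ, \frQ, M_{\hcoY_{2}}, L_{\hcoY_{2}}, F_{\rho}$ (with $\frQ_{2}$ handled as in Lemma \ref{cla:Tn=4}). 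Then, exactly as in Subsection~5.4, I would use the Leray spectral sequence for $\Lrho_{\hcoY_{2}}$ together with Lemma \ref{cla:van2} to descend to $\hcoY_{3}$, and finally apply Theorem \ref{thm:Bott} fiberwise over $\rG(1, V) = \mP^{4}$ through the $\mathrm{G}(3,6)$-bundle $\hcoY_{3} \to \mP(V)$. The technical obstacle is purely bookkeeping: verifying that for each of the $16$ choices of $(\widetilde{\sA},\widetilde{\sB})$ and the two values $k = 1, 2$, the resulting Schur-functor weights on $\sS$ and $\sQ$ fall into the Bott-vanishing chamber for every $t$. No new geometric input beyond what has already been set up in Sections \ref{sec:SheavesDef} and \ref{section:Lefschetz-Y} should be required.
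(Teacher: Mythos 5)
Your first two steps coincide with the paper's: Serre duality with $K_{\widetilde{\hcoY}}=-10M+2F_{\widetilde{\hcoY}}$, followed by peeling off $F_{\widetilde{\hcoY}}$ twice to reduce everything to the vanishing of $H^{\bullet}(F_{\widetilde{\hcoY}},\sC^{*}((t-10)M+iF_{\widetilde{\hcoY}})|_{F_{\widetilde{\hcoY}}})$ for $i=1,2$. The gap is in how you propose to prove that vanishing, and it is not mere bookkeeping.

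The proposition asserts the isomorphism for \emph{every} integer $t$, so the restriction vanishing must hold for every $t$. Any argument that descends to $\hcoY_{3}$ and applies Theorem \ref{thm:Bott} fiberwise along the $\rG(3,6)$-bundle $\hcoY_{3}\to\rG(n-3,V)$ is structurally incapable of this: $M$ restricts to the Pl\"ucker polarization $\det\sQ$ on each $\rG(3,6)$-fiber, so the twist $(t-10)M$ sweeps through every Bott chamber as $t$ varies, and the cohomology cannot vanish for all $t$ along that fibration (this is exactly why Theorem \ref{thm:digGvan} itself is only asserted for $1\leq t\leq 9$). The vanishing on $F_{\widetilde{\hcoY}}$ holds for all $t$ for a different reason: $M$ is trivial on the fibers of the contraction $F_{\widetilde{\hcoY}}\to G_{\hcoY}$, so the $t$-dependence disappears once one works along \emph{that} fibration, which is transverse to the Grassmannian-bundle structure. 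Moreover neither $F_{\widetilde{\hcoY}}$ nor $\sO(iF_{\widetilde{\hcoY}})|_{F_{\widetilde{\hcoY}}}$ is pulled back from $\hcoY_{3}$ (on $\hcoY_{2}$ the strict transform has class $2M_{\hcoY_2}-L_{\hcoY_2}-F_{\rho}$ by Proposition \ref{prop:F'}, with $F_{\rho}$ occurring nontrivially), so Lemma \ref{cla:van2} and the Leray spectral sequence for $\Lrho_{\hcoY_{2}}$ do not reach these sheaves.

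What the paper actually does at this point is substantially different and heavier: since $F_{\widetilde{\hcoY}}\to G_{\hcoY}$ is not flat, it passes to the flattening $F^{(3)}\to\widehat{G}'$ constructed in \cite[Subsect.~5.5]{Geom}, uses that $\widehat{F}\to F_{\widetilde{\hcoY}}$ is finite with $\widehat F$ having only rational singularities, and invokes upper semicontinuity over the flat base to reduce to the most degenerate fiber $Fib^{(3)}(V_{3},V_{4},V_{4})=A\cup B$, which is reducible. The vanishing is then proved by a Mayer--Vietoris argument on $A\cup B$, using the explicit descriptions of $F_{A}$, $F_{B}$ and of the restrictions of $\widetilde{\sS}_{L}$, $\widetilde{\sQ}$, $\widetilde{\frQ}$ to $A$, $B$ and $A\cap B$ worked out in Lemma \ref{lem:FA-FB-Lem} of the appendix. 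None of this input appears in your outline, and without some substitute for it the proof does not close.
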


Then the proof of Theorem \ref{thm:digGvan}
in the case where $6\leq t\leq 9$
is reduced immediately to the case where $1\leq t\leq 4$.
For our proof of the above proposition, we note that each of the cohomology
groups $H^{\bullet}(\widetilde{\hcoY},\sC(-t))$ is Serre dual to
\[
H^{13-\bullet}(\widetilde{\hcoY},\sC^{*}((t-10)M_{\widetilde{\hcoY}}+2F_{\widetilde{\hcoY}})).\]
 Then, from the exact sequence \[
0\to\sC^{*}((t-10)M_{\widetilde{\hcoY}}+(i-1)F_{\widetilde{\hcoY}})\to\sC^{*}((t-10)M_{\widetilde{\hcoY}}+iF_{\widetilde{\hcoY}})\to\sC^{*}((t-10)M_{\widetilde{\hcoY}}+iF_{\widetilde{\hcoY}})|_{F_{\widetilde{\hcoY}}}\to0,\]
 we see that it suffices to show the following vanishings: \begin{equation}
H^{13-\bullet}(F_{\widetilde{\hcoY}},\sA_{i})=0\text{\text{ for }}\ensuremath{i=1,2}\text{\text{ and any}}\;\ensuremath{\bullet},\label{eq:vanF}\end{equation}
 where we set \[
\sA_{i}:=\sC^{*}((t-10)M_{\widetilde{\hcoY}}+iF_{\widetilde{\hcoY}})|_{F_{\widetilde{\hcoY}}}.\]
We evaluate the cohomologies (\ref{eq:vanF}) on the exceptional divisor
$F_{\widetilde{\hcoY}}$ by using the flattening $F^{(3)}\to\widehat{G}'$
of the contraction $F_{\widetilde{\hcoY}}\to G_{\hcoY}$, which we have constructed in \cite[Subsect.~5.5]{Geom}. We will use the notation there freely.
Note that, since the morphism $\widehat{F}\to F_{\widetilde{\hcoY}}$
is finite, and $\widehat{F}$ has only rational singularities by its
construction, the desired vanishings follow from the vanishings of
the cohomology groups of the pull-backs of $\sA_{i}$ on $F^{(3)}$.
Also, since the morphism $F^{(3)}\to\widehat{G}'$ is flat, we have
only to show the vanishing along its fibers. Then, by the upper semi-continuity
of cohomology groups on fibers, it suffices to prove the vanishing
on the fibers $Fib^{(3)}(V_{3},V_{4},V_{4})=A\cup B$ over the points
$([V_{3}];[V_{4}],[V_{4}])\in\widehat{G}'$,
where we refer \cite[Prop.~5.11]{Geom} for the notation.
Note that the restriction of the pull-back of $M_{\widetilde{\hcoY}}$
to the fibers is trivial. Therefore, it suffices to show \begin{equation}
H^{\bullet}(A\cup B,\sC_{A\cup B}^{*}(iF_{A\cup B}))=0\,(i=1,2),\label{eq:H-vanishing-AB}\end{equation}
where $\sC_{A\cup B}$ and $F_{A\cup B}$ are the pull-backs of $\sC$
and $F_{\widetilde{\hcoY}}$ to $A\cup B$, respectively. We recall
$A=\mP(\sO_{\rG(2,V_{4})}\oplus\sU_{\rG(2,V_{4})}(1))\vert_{\rG(2,V_{3})}$
and a natural morphism $A\to\rG(2,V_{3})=\mP(V_{3}^{*})$. Also
recall that $E_{AB}=A\cap B$ is a divisor on $A$. 

In Lemma \ref{lem:FA-FB-Lem} in Appendix \ref{sec:Appendix-D},
we describe several pull-backs to $A$ and $B$ of locally free sheaves on 
$\widetilde{\hcoY}$. Using this we can 
complete our proof of Proposition \ref{cla:key} as follows:

\vspace{5pt}

\noindent {\bf Proof of Proposition $\ref{cla:key}$.}  
It suffices to show the vanishings of (\ref{eq:H-vanishing-AB}).
Tensoring $\sC_{A\cup B}^{*}(iF_{A\cup B})$ with the Mayor-Vietris
sequence, we have \begin{equation}
0\to\sC_{A\cup B}^{*}(iF_{A\cup B})\to\sC_{A}^{*}(iF_{A})\oplus\sC_{B}^{*}(iF_{B})\to\sC_{A\cap B}^{*}(iF_{A\cap B})\to0,\label{eq:MV}\end{equation}
 where $\sC_{A}$, $\sC_{B}$, and $\sC_{A\cap B}$ are the restrictions
of $\sC_{A\cup B}$ to $A$, $B$ and $A\cap B$ respectively, and
$F_{A\cap B}$ is the restriction of $F_{A\cup B}$ to $A\cap B$.
By using Lemma \ref{lem:FA-FB-Lem} (1), it is straightforward to
verify the vanishings of $H^{\bullet}(A,\sC_{A}^{*}(iF_{A}))$. Also,
by Lemma \ref{lem:FA-FB-Lem} (2), we see that the restriction maps
$H^{\bullet}(B,\sC_{B}^{*}(iF_{B}))\to H^{\bullet}(A\cap B,\sC_{A\cap B}^{*}(iF_{A\cap B}))$
are isomorphisms. Then we have the desired vanishings of $H^{\bullet}(A\cup B,\sC_{A\cup B}^{*}(iF_{A\cup B}))$. $\hfill\square$

\subsection{Case $n=3$ and $t=6,7$}
\label{subsection:B3}
For $n=3$, it remains to show
the vanishings of
$H^{\bullet}(\widetilde{\sB}(-6))$ and 
$H^{\bullet}(\widetilde{\sB}(-7))$ for
$\widetilde{\sB}=\sO_{\widetilde{\hcoY}}$,
$\widetilde{\frQ}$, 
$\widetilde{\sS}^*_L$, or
$\widetilde{\sQ}$.
For this, we have only to show the following:

\begin{prop}
\label{cla:key2}
It holds
\[
H^{\bullet}(\widetilde{\hcoY}, \widetilde{\sB}(-t))
\simeq
H^{9-\bullet}(\widetilde{\hcoY}, \widetilde{\sB}^*(t-8))
\]
for any integer $t$.
\end{prop}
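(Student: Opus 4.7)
The proposition is the $n=3$ analog of Proposition \ref{cla:key}, so my plan is to imitate that argument, with the simplifications afforded by the fact that the coefficient of $F_{\widetilde{\hcoY}}$ in $K_{\widetilde{\hcoY}}$ is now $1$ rather than $2$.

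First I would apply Serre duality. Specialising (\ref{eq:YY2}) to $n=3$ gives $K_{\widetilde{\hcoY}}=-8M_{\widetilde{\hcoY}}+F_{\widetilde{\hcoY}}$, and $\dim\widetilde{\hcoY}=9$, so
\[
H^{\bullet}(\widetilde{\hcoY},\widetilde{\sB}(-t))\simeq H^{9-\bullet}\bigl(\widetilde{\hcoY},\widetilde{\sB}^{*}((t-8)M_{\widetilde{\hcoY}}+F_{\widetilde{\hcoY}})\bigr).
\]
To remove the extra $F_{\widetilde{\hcoY}}$ it suffices, via the short exact sequence
\[
0\to\widetilde{\sB}^{*}(t-8)\to\widetilde{\sB}^{*}((t-8)M_{\widetilde{\hcoY}}+F_{\widetilde{\hcoY}})\to\widetilde{\sB}^{*}((t-8)M_{\widetilde{\hcoY}}+F_{\widetilde{\hcoY}})\big|_{F_{\widetilde{\hcoY}}}\to 0,
\]
to show that $H^{\bullet}\bigl(F_{\widetilde{\hcoY}},\widetilde{\sB}^{*}((t-8)M_{\widetilde{\hcoY}}+F_{\widetilde{\hcoY}})\big|_{F_{\widetilde{\hcoY}}}\bigr)=0$ for every $\bullet$ and every $t$. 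This is the single vanishing I need to establish (in the $n=4$ case one needed a chain of two such vanishings, indexed by $i=1,2$).

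The remaining step, analogous to the endgame of Proposition \ref{cla:key}, is to verify this vanishing on $F_{\widetilde{\hcoY}}$. The plan is to use the flattening of the contraction $F_{\widetilde{\hcoY}}\to G_{\hcoY}$ constructed in \cite[\S 5]{Geom} (for general $n$, hence available also for $n=3$), and to reduce by upper semicontinuity and finite/flat base change to a cohomology computation on a fibre. Because the pull-back of $M_{\widetilde{\hcoY}}$ to such a fibre is trivial, the twist by $(t-8)M_{\widetilde{\hcoY}}$ disappears and the $t$-dependence is lost; one is left with an explicit cohomology computation involving only the restrictions of $\widetilde{\sB}^{*}$ and $\sO(F_{\widetilde{\hcoY}})$. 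Since the $n=3$ fibres of the flattening are considerably simpler than the $A\cup B$ appearing for $n=4$ (the two components of the Stein factorisation collapse in this dimension, matching the Remark $\hcoY_2\simeq\widetilde{\hcoY}$), I would expect the Mayer--Vietoris argument to be replaced by a direct Borel--Weil--Bott calculation on a single projective bundle.

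The main obstacle is the last step: identifying the fibres of the flattening and the restrictions of $\widetilde{\sS}^{*}_L$, $\widetilde{\sQ}$ and $\widetilde{\frQ}$ to them, in the spirit of Lemma \ref{lem:FA-FB-Lem} in Appendix \ref{sec:Appendix-D}. Once these restriction formulas are in hand, the vanishings reduce to the Borel--Weil--Bott theorem (Theorem \ref{thm:Bott}) applied fibrewise, and Proposition \ref{cla:key2} follows just as in the $n=4$ case.
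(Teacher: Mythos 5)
Your reduction is exactly the paper's: Serre duality with $K_{\widetilde{\hcoY}}=-8M_{\widetilde{\hcoY}}+F_{\widetilde{\hcoY}}$ and $\dim\widetilde{\hcoY}=9$, followed by a single short exact sequence peeling off $F_{\widetilde{\hcoY}}$, so that everything rests on the vanishing of $H^{\bullet}\bigl(F_{\widetilde{\hcoY}},\widetilde{\sB}^{*}((t-8)M_{\widetilde{\hcoY}}+F_{\widetilde{\hcoY}})|_{F_{\widetilde{\hcoY}}}\bigr)$, which one then transports to the fibres of the flattening $F^{(3)}\to\widehat{G}'$ where $M_{\widetilde{\hcoY}}$ becomes trivial. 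Up to that point the proposal is correct and matches the paper, which indeed proves Proposition \ref{cla:key2} ``in the same way as Proposition \ref{cla:key}.''

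The gap is in your final step. You expect the fibres of the flattening to degenerate to a single projective bundle for $n=3$, so that Mayer--Vietoris can be dropped in favour of a direct Borel--Weil--Bott computation; but this is not established, and it is not what happens in the paper. Lemma \ref{lem:FA-FB-Lem} is stated and proved for general $n$, and it still presents the fibre $Fib^{(3)}(V_{n-1},V_{n},V_{n})$ as a reducible variety $A\cup B$ with $E_{AB}=A\cap B$ a divisor on $A$; the paper's proof of Proposition \ref{cla:key2} consists precisely of running the Mayer--Vietoris sequence (\ref{eq:MV}) with $i=1$ using the restriction formulas of that lemma (part (1) for $A$, part (2) for $B$, the latter giving that $H^{\bullet}(B,-)\to H^{\bullet}(A\cap B,-)$ is an isomorphism). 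Your heuristic for the simplification is also a non sequitur: the isomorphism $\hcoY_2\simeq\widetilde{\hcoY}$ for $n=3$ reflects that $G_{\rho}$ has codimension $n-2=1$, so the blow-up $\widetilde{\Lrho}_{\hcoY_2}$ is an isomorphism; it says nothing about the fibres of $F_{\widetilde{\hcoY}}\to G_{\hcoY}$ or of its flattening, which is where the two-component structure lives. To close the argument you should simply invoke Lemma \ref{lem:FA-FB-Lem} as stated (it already covers $n=3$, with $\mP(V_{n-1}^{*})=\mP^1$) and repeat the Mayer--Vietoris computation of Proposition \ref{cla:key}, rather than re-derive the fibre geometry.
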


\begin{proof}
We can show the assertion in the same way 
to the proof of 
Proposition $\ref{cla:key}$
using Lemma \ref{lem:FA-FB-Lem}.
\end{proof}

\subsection{Calculating $H^0(\widetilde{\hcoY}, \widetilde{\sQ}^*\otimes \widetilde{\sQ}(M))$}
\label{sub:C1}
In the last part of the proof of
\cite[Thm.~8.3.2]{ReyeEnr}, 
we use the following lemma:

\begin{lem}
\label{lem:comple}
Suppose $n=3$.
There exists a unique
$\SL(V)$-equivariant map
$\widetilde{\sQ}^*\boxtimes \sO_{\hchow}\to 
\widetilde{\sQ}^*(M)\boxtimes \sO_{\hchow}(H)$ up to constant.
\end{lem}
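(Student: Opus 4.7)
The strategy is to compute the space of $\SL(V)$-equivariant morphisms via the K\"unneth formula and representation theory, and show that it is one-dimensional.

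First, by K\"unneth,
$$\Hom\bigl(\widetilde{\sQ}^{*}\boxtimes\sO_{\hchow},\,\widetilde{\sQ}^{*}(M)\boxtimes\sO_{\hchow}(H)\bigr)\simeq H^{0}\bigl(\widetilde{\hcoY},\,\widetilde{\sQ}^{*}\otimes\widetilde{\sQ}(M)\bigr)\otimes H^{0}\bigl(\hchow,\,\sO_{\hchow}(H)\bigr),$$
and taking $\SL(V)$-invariants I must show the resulting subspace is one-dimensional. Since $\hchow\to\chow=\nS^{*}_{2}$ is the Springer resolution (birational onto its normal image) and $\chow$ is nondegenerate in $\mP({\ft S}^{2}V)$ (a symmetric bilinear form vanishing on all $v\otimes v$ must be zero), the second factor is $H^{0}(\hchow,\sO_{\hchow}(H))\simeq{\ft S}^{2}V^{*}$.

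Using the trace splitting $\widetilde{\sQ}^{*}\otimes\widetilde{\sQ}\simeq\sO_{\widetilde{\hcoY}}\oplus\mathfrak{sl}(\widetilde{\sQ})$, I would split the first factor into trace and trace-free summands. For the trace part, since for $n=3$ one has $\nS_{4}=\mP({\ft S}^{2}V^{*})$ and the double cover $\nT_{4}\to\nS_{4}$ satisfies $\pi_{*}\sO_{\nT_{4}}=\sO_{\nS_{4}}\oplus\sO_{\nS_{4}}(-2)$ with branch divisor of degree $4$, while $\widetilde{\hcoY}\to\nT_{4}$ is birational with normal target, one obtains $H^{0}(\widetilde{\hcoY},\sO(M))\simeq{\ft S}^{2}V$. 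Schur's lemma then yields $[{\ft S}^{2}V\otimes{\ft S}^{2}V^{*}]^{\SL(V)}\simeq\mC$, giving the required one-dimensional contribution and proving existence.

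It remains to show the vanishing $[H^{0}(\widetilde{\hcoY},\mathfrak{sl}(\widetilde{\sQ})(M))\otimes{\ft S}^{2}V^{*}]^{\SL(V)}=0$. Using $\widetilde{\hcoY}\simeq\hcoY_{2}$ for $n=3$, Proposition~\ref{cla:M} expressing $M=\Lrho_{\hcoY_{2}}^{*}\det\sQ-L_{\hcoY_{2}}-F_{\rho}$, and $\Lrho_{\hcoY_{2}*}\sO(-F_{\rho})=\sI_{\Prt_{\rho}}$ (with vanishing higher direct images), this cohomology injects into $H^{0}(\hcoY_{3},\mathfrak{sl}(\sQ)(\det\sQ-L_{\hcoY_{3}}))$. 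The identity $\sQ\otimes\wedge^{2}\sQ\simeq{\ft \Sigma}^{2,1}\sQ\oplus\det\sQ$ for $\rank\sQ=3$, together with $\sQ^{*}\otimes\det\sQ\simeq\wedge^{2}\sQ$, gives $\mathfrak{sl}(\sQ)\otimes\det\sQ\simeq{\ft \Sigma}^{2,1}\sQ$; Borel-Weil-Bott on $\hcoY_{3}=\rG(3,\wedge^{2}V)$ then identifies the target with ${\ft \Sigma}^{2,1}(\wedge^{2}V)$, since $L_{\hcoY_{3}}\simeq\wedge^{4}V\otimes\sO$ is a trivial $\SL(V)$-character. The plan is to decompose ${\ft \Sigma}^{2,1}(\wedge^{2}V)$ as an $\SL(V)$-representation and verify that no ${\ft S}^{2}V$-isotypic component appears.

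The main obstacle is this final representation-theoretic step. Although ${\ft \Sigma}^{2,1}(\mC^{6})$ is a standard irreducible $\SL_{6}$-module of dimension $70$, its branching to $\SL(V)=\SL_{4}$ through the embedding $\wedge^{2}\colon\SL_{4}\hookrightarrow\SL_{6}$ (equivalently through the exceptional isomorphism $\SL_{4}\simeq\mathrm{Spin}_{6}$, under which $\wedge^{2}V$ is the standard $\mathrm{Spin}_{6}$-representation) must be computed by characters or weight-counting to confirm the absence of ${\ft S}^{2}V$; this is where most of the technical work lies.
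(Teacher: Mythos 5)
Your proposal is correct and reaches the same conclusion as the paper, but by a genuinely different route for the key cohomology computation. The paper does not split off the trace: it tensors $\widetilde{\sQ}^*\otimes\widetilde{\sQ}$ with the sequence $0\to\sO_{\widetilde{\hcoY}}(M)\to\det\widetilde{\sQ}\otimes\wedge^4V^*\to\det\widetilde{\sQ}|_{F_\rho}\otimes\wedge^4V^*\to0$, computes $H^0(\widetilde{\sQ}^*\otimes\widetilde{\sQ}(\det\widetilde{\sQ}))\otimes\wedge^4V^*\simeq{\ft \Sigma}^{(2,0,0,0)}V\oplus{\ft \Sigma}^{(1,1,1,-1)}V\oplus{\ft \Sigma}^{(1,1,0,0)}V\oplus{\ft \Sigma}^{(2,1,0,-1)}V$ by Bott plus plethysm, and then must restrict to $F_\rho$ (using ${\widetilde{\sQ}}|_{F_\rho}$ in terms of $\Omega^1_{\mP(V)}(1)$, Proposition~\ref{cla:det}) to kill the unwanted ${\ft \Sigma}^{(1,1,1,-1)}V$, which sits in the \emph{trace} summand. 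Your decomposition $\widetilde{\sQ}^*\otimes\widetilde{\sQ}\simeq\sO\oplus\mathfrak{sl}(\widetilde{\sQ})$ sidesteps that restriction argument entirely: the trace part is pinned down exactly as $H^0(\widetilde{\hcoY},\sO(M))\simeq{\ft S}^2V$ via the double-cover structure $\nT_4\to\mP({\ft S}^2V^*)$ (which also gives existence more transparently than the paper's closing sentence), and for the trace-free part an upper bound obtained by dropping the $-F_\rho$ twist suffices. The one step you flag as the "main obstacle" is not actually an obstacle: the branching you need is precisely the plethysm the paper itself invokes, ${\ft \Sigma}^{(2,1,0,0,0,0)}(\wedge^2V)\simeq{\ft \Sigma}^{(2,2,1,1)}V\oplus{\ft \Sigma}^{(3,2,1,0)}V$ (dimensions $6+64=70$), and after twisting by $\wedge^4V^*$ this is ${\ft \Sigma}^{(1,1,0,0)}V\oplus{\ft \Sigma}^{(2,1,0,-1)}V$, neither of which is ${\ft S}^2V$; so the invariant pairing with ${\ft S}^2V^*$ vanishes and your argument closes. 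In short: your route trades the paper's geometric input on $F_\rho$ for the elementary structure of the double cover, at the cost of learning slightly less about $H^0(\widetilde{\hcoY},\widetilde{\sQ}^*\otimes\widetilde{\sQ}(M))$ itself.
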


\begin{proof}
We compute 
\[
H^0(\widetilde{\sQ}^*\otimes \widetilde{\sQ}(M)\boxtimes
\sO_{\hchow}(H))\simeq
H^0(\widetilde{\hcoY}, \widetilde{\sQ}^*\otimes \widetilde{\sQ}(M))
\otimes
H^0(\hchow, \sO_{\hchow}(H)).
\]
We have 
$H^0(\hchow, \sO_{\hchow}(H))\simeq
{\ft S}^2 V^*$.
To compute
$H^0(\widetilde{\hcoY}, \widetilde{\sQ}^*\otimes \widetilde{\sQ}(M))$,
we tensor $\widetilde{\sQ}^*\otimes \widetilde{\sQ}$ to
the exact sequence
\[
0\to
\sO_{\widetilde{\hcoY}}(M)\to 
\det \widetilde{\sQ} \otimes \wedge^4 V^*
\to \det \widetilde{\sQ}|_{F_{\rho}} \otimes \wedge^4 V^*
\to 0
\]
induced from
Proposition \ref{cla:M}.
By Theorem \ref{thm:Bott} and the Littlewood-Richardson rule,
we have
\begin{eqnarray*}
H^0(\widetilde{\hcoY}, \widetilde{\sQ}^*\otimes \widetilde{\sQ}(\det \widetilde{\sQ}))\otimes \wedge^4 V^*
\simeq
H^0(\widetilde{\hcoY}, \wedge^2 \widetilde{\sQ}\otimes \widetilde{\sQ})\otimes \wedge^4 V^*\simeq\\
(H^0(\widetilde{\hcoY}, \wedge^3 \widetilde{\sQ})\oplus
H^0(\widetilde{\hcoY}, {\ft \Sigma}^{(2,1)} \widetilde{\sQ}))
\otimes \wedge^4 V^*\simeq\\
(\wedge^3 (\wedge^2 V)\oplus {\ft \Sigma}^{(2,1,0,0,0,0)} \wedge^2 V)
\otimes \wedge^4 V^*.
\end{eqnarray*}
By the plethysm of the Schur functors
\[
\wedge^3 (\wedge^2 V)\simeq
{\ft \Sigma}^{(3,1,1,1)} V\oplus 
{\ft \Sigma}^{(2,2,2,0)} V,
\]
\[
{\ft \Sigma}^{(2,1,0,0,0,0)} \wedge^2 V\simeq
{\ft \Sigma}^{(2,2,1,1)} V
\oplus
{\ft \Sigma}^{(3,2,1,0)} V,
\]
we obtain
\begin{align}
\label{eqn:Schur1}
&H^0(\widetilde{\hcoY}, \widetilde{\sQ}^*\otimes \widetilde{\sQ}(\det \widetilde{\sQ}))\otimes \wedge^4 V^*
\simeq\\
&{\ft \Sigma}^{(2,0,0,0)} V\oplus 
{\ft \Sigma}^{(1,1,1,-1)} V\oplus
{\ft \Sigma}^{(1,1,0,0)} V
\oplus
{\ft \Sigma}^{(2,1,0,-1)} V.\nonumber
\end{align}
By Proposition \ref{cla:det},
we have
\begin{align*}
&H^0(F_{\rho},\widetilde{\sQ}^*\otimes \widetilde{\sQ}(\det \widetilde{\sQ})|_{F_{\rho}}) \otimes \wedge^4 V^*\simeq\\
&H^0(\mP(V), (\Omega^1_{\mP(V)}(1))^*\otimes \Omega^1_{\mP(V)}(1)\otimes
\sO_{\mP(V)}(2))\otimes \wedge^4 V.
\end{align*} 
By Theorem \ref{thm:Bott} and the Littlewood-Richardson rule,
we obtain
\begin{eqnarray}
\label{eqn:Schur2}
H^0(F_{\rho},\widetilde{\sQ}^*\otimes \widetilde{\sQ}(\det \widetilde{\sQ})|_{F_{\rho}}) \otimes \wedge^4 V^*\simeq
{\ft \Sigma}^{(1,1,1,-1)} V
\oplus
{\ft \Sigma}^{(2,1,0,-1)} V.
\end{eqnarray} 
Since the identity of $\Hom (\widetilde{\sQ},\widetilde{\sQ})$
induces
that of $\Hom (\Omega^1_{\mP(V)}(2), \Omega^1_{\mP(V)}(2))$,
the component 
${\ft \Sigma}^{(1,1,1,-1)} V$ in (\ref{eqn:Schur1}) is mapped 
isomorphically in (\ref{eqn:Schur2}).
Therefore,
$H^0(\widetilde{\hcoY}, \widetilde{\sQ}^*\otimes \widetilde{\sQ}(M))$
consists of
at most
${\ft \Sigma}^{(2,0,0,0)} V\simeq {\ft S}^2 V$,
${\ft \Sigma}^{(1,1,0,0)} V$, and
${\ft \Sigma}^{(2,1,0,-1)} V$
(it seems that
$H^0(\widetilde{\hcoY}, \widetilde{\sQ}^*\otimes \widetilde{\sQ}(M))\simeq
{\ft \Sigma}^{(2,0,0,0)} V\oplus {\ft \Sigma}^{(1,1,0,0)} V$
but we do not need to prove this).
Hence
$\SL(V)$-invariant sections of
$H^0(\widetilde{\hcoY}, \widetilde{\sQ}^*\otimes \widetilde{\sQ}(M))
\otimes
H^0(\hchow, \sO_{\hchow}(H))$
come from the identity element of 
${\ft S}^2 V\otimes {\ft S}^2 V^*$
up to constant.
\end{proof}

\subsection{Lefschetz collection in $\sD^{b}(\widetilde{\hcoY})$}

It is straightforward to obtain the following result from Theorem \ref{thm:digGvan}.

\begin{cor} \label{thm:Gvann=3} $\;$
Suppose $n=3$.
Let $\Lambda:=\left\{ 3,2,1_{a},1_{b}\right\}$
be an ordered set $(\Lambda,\prec)$. 
Define \[
(\sE_{\alpha})_{\alpha\in\Lambda}:=
(\sE_{3},\sE_{2},\sE_{1a},\sE_{1b})=
(\sO_{\widetilde{\hcoY}},\, \widetilde{\frQ},\, 
\widetilde{\sS}_{L}^{*},\,
\widetilde{\sQ}
)
\]
be an ordered collection of sheaves.
Set \[
\sD_{{\hcoY}}:=\langle\sE_{3},\sE_{2},\sE_{1a},\sE_{1b}\rangle\subset\sD^{b}(\widetilde{\hcoY}).\]
 Then \[
\sD_{{\hcoY}},\sD_{{\hcoY}}(1),\dots,\sD_{{\hcoY}}(5), \sO_{\widetilde{\hcoY}}(6), \sO_{\widetilde{\hcoY}}(7)
\]
 is a Lefschetz collection, 
where $(t)$ represents the twist by the sheaf $\sO_{\widetilde{\hcoY}}(tM)$. 
\end{cor}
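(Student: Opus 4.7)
The plan is to unfold the definition of a Lefschetz collection and verify each required vanishing directly from Theorem \ref{thm:digGvan}. Writing the proposed collection as $\sD_0,\sD_1(1),\dots,\sD_7(7)$ with $\sD_0=\dots=\sD_5=\sD_{\hcoY}$ and $\sD_6=\sD_7=\langle\sO_{\widetilde{\hcoY}}\rangle$, the nesting condition $\sD_7\subset\sD_6\subset\sD_5\subset\dots\subset\sD_0$ is immediate, since $\sO_{\widetilde{\hcoY}}=\sE_3\in\sD_{\hcoY}$. The only things left to check are (i) internal semi-orthogonality of each $\sD_{\hcoY}(s)$, and (ii) the $\mathrm{Hom}$-vanishings from every later piece to every earlier piece in the full ordered sequence.

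For (i), after tensoring by $\sO_{\widetilde{\hcoY}}(-sM)$ we reduce to semi-orthogonality of the ordered tuple $(\sE_3,\sE_2,\sE_{1a},\sE_{1b})$, which is exactly Theorem \ref{thm:digGvan}(1). For (ii), I would translate $\mathrm{Hom}^{\bullet}(E(j),F(i))=0$ for $j>i$ into $H^{\bullet}(\widetilde{\hcoY},E^{*}\otimes F(-(j-i)M))=0$; three sub-cases then arise. When both $E$ and $F$ lie in $\{\sE_3,\sE_2,\sE_{1a},\sE_{1b}\}$ and $1\leq j-i\leq 5$, the required vanishing is the main clause of Theorem \ref{thm:digGvan}(2). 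When $E=\sO_{\widetilde{\hcoY}}$ and $F$ is any of the four sheaves, the vanishing for $1\leq j-i\leq 5$ follows again from the main clause with $\widetilde{\sA}=\sO_{\widetilde{\hcoY}}$, while the decisive range $j-i\in\{6,7\}$ is covered by the special clause ``$t=6,7$ with $\widetilde{\sA}=\sO_{\widetilde{\hcoY}}$'' of the same theorem. Finally, the pair $\sO_{\widetilde{\hcoY}}(6),\sO_{\widetilde{\hcoY}}(7)$ reduces to $H^{\bullet}(\widetilde{\hcoY},\sO_{\widetilde{\hcoY}}(-M))=0$, which is the same main clause at $j-i=1$.

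Admissibility of each subcategory $\sD_s$ is automatic, since a subcategory of $\sD^{b}$ of a smooth projective variety generated by a semi-orthogonal collection of exceptional objects is always admissible. Thus the corollary reduces to pure bookkeeping on Theorem \ref{thm:digGvan} and no genuine obstacle is expected; the substantial content has already been absorbed into the proof of that theorem. What is worth emphasising is that the asymmetric range ``$t=6,7$ with $\widetilde{\sA}=\sO_{\widetilde{\hcoY}}$'' in Theorem \ref{thm:digGvan}(2) is exactly what permits the two extra tail objects $\sO_{\widetilde{\hcoY}}(6)$ and $\sO_{\widetilde{\hcoY}}(7)$ to sit outside $\sD_{\hcoY}$ and produce a length-$8$ Lefschetz collection; the rest of the argument is a mechanical translation of that vanishing statement into the Lefschetz language.
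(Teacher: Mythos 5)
Your proposal is correct and follows exactly the route the paper intends: the paper simply declares the corollary ``straightforward to obtain from Theorem \ref{thm:digGvan}'', and your case analysis (internal semi-orthogonality from part (1), the vanishings for $1\leq t\leq 5$ from the main clause of part (2), and the tail objects $\sO_{\widetilde{\hcoY}}(6),\sO_{\widetilde{\hcoY}}(7)$ handled by the asymmetric clause $t=6,7$ with $\widetilde{\sA}=\sO_{\widetilde{\hcoY}}$) is precisely the bookkeeping being suppressed. The only point worth flagging is that admissibility of the pieces rests on the four generators being exceptional objects, which the paper records only in the remark following the corollary rather than in Theorem \ref{thm:digGvan} itself.
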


\begin{rem}
By similar calculations to show Theorem \ref{thm:digGvan},
we obtain the following:

\begin{enumerate}[(1)]
\item
The ordered collection of sheaves
$(\sE_{3},\sE_{2},\sE_{1a},\sE_{1b})$
is a strongly exceptional collection in $\sD^b(\widetilde{\hcoY})$.
\item
$\Hom$'s of the sheaves in the above collection are 
given by the following diagram$:$ 
\[
\begin{matrix}
\begin{xy}
(5,-12)*+{\circ}="c3",
(20,-12)*+{\circ}="c2",
(35,0)*+{\circ}="c1a",
(35,-24)*+{\circ}="c1b",
(0,-12)*{\mathcal{E}_3},
(20,-15)*{\mathcal{E}_2},
(38,3)*{\mathcal{E}_{1a}},
(38,-27)*{\mathcal{E}_{1b}},
(18,1)*{\wedge^2V},
(18,-25)*{\wedge^2 V},
(30,-8)*{V},
(15,-9)*{V},
(30,-16)*{V},
\ar "c3";"c2"
\ar "c2";"c1a"
\ar "c2";"c1b"
\ar @/^1pc/@{->} "c3";"c1a"
\ar @/_1pc/@{->} "c3";"c1b"
\end{xy}
\end{matrix}
\]

 
\end{enumerate}
\end{rem}

\vspace{5pt}

In case $n=4$,
the following Lefschetz collection is suitable for our purpose (see \cite{HoTa3}).

\begin{cor}
\label{thm:Gvan} $\;$
Suppose $n=4$. Let $\Lambda:=\left\{ 3,2,1_{a},1_{b}\right\} $
be an ordered set $(\Lambda,\prec)$. 
Define \[
(\sE_{\alpha})_{\alpha\in\Lambda}:=(\sE_{3},\sE_{2},\sE_{1_{a}},\sE_{1_{b}})=(\widetilde{\sS}_{L},\,\widetilde{\frQ}^{*},\,\sO_{\widetilde{\hcoY}},\,\widetilde{\sQ}^{*}(M))\]
 be an ordered collection of sheaves on $\widetilde{\hcoY}$. 
Set \[
\sD_{\widetilde{\hcoY}}:=\langle\sE_{3},\sE_{2},\sE_{1_{a}},\sE_{1_{b}}\rangle\subset\sD^{b}(\widetilde{\hcoY}).\]
 Then \[
\sD_{\widetilde{\hcoY}},\sD_{\widetilde{\hcoY}}(1),\dots,\sD_{\widetilde{\hcoY}}(9)\]
 is a Lefschetz collection,
where $(t)$ represents the twist by the sheaf $\sO_{\widetilde{\hcoY}}(tM)$. 
\end{cor}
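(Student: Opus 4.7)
The plan is to verify the two defining properties of the proposed Lefschetz collection of length $10$ with rectangular block $\sD_{\widetilde{\hcoY}}=\langle\sE_3,\sE_2,\sE_{1_a},\sE_{1_b}\rangle$: first, the internal semi-orthogonality of the block, i.e.\ $\Hom^\bullet(\sE_\alpha,\sE_\beta)=0$ whenever $\alpha$ comes after $\beta$ in the listed ordering; and second, the inter-block vanishings $H^\bullet(\sE_\alpha^*\otimes\sE_\beta(-kM))=0$ for every $(\alpha,\beta)\in\{3,2,1_a,1_b\}^2$ and every $1\leq k\leq 9$.

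The first move is to absorb the $\sO(M)$-twist carried by $\sE_{1_b}=\widetilde{\sQ}^*(M)$ into the shift parameter. Setting $\mathring{\sE}_3=\widetilde{\sS}_L$, $\mathring{\sE}_2=\widetilde{\frQ}^*$, $\mathring{\sE}_{1_a}=\sO_{\widetilde{\hcoY}}$, $\mathring{\sE}_{1_b}=\widetilde{\sQ}^*$, each $\sE_\alpha^*\otimes\sE_\beta(-kM)$ takes the form $\mathring{\sE}_\alpha^*\otimes\mathring{\sE}_\beta(-jM)$ with effective twist $j=k+c_\alpha-c_\beta$, where $c_{1_b}=1$ and $c_\alpha=0$ for $\alpha\neq 1_b$. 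As $k$ runs through $\{1,\ldots,9\}$, $j$ runs through $\{1,\ldots,9\}$, $\{0,\ldots,8\}$, or $\{2,\ldots,10\}$ according to whether neither, only $\beta$, or only $\alpha$ equals $1_b$. Most of the sixteen products $\mathring{\sE}_\alpha^*\otimes\mathring{\sE}_\beta$ can then be read directly off the list of tensor products $\widetilde{\sA}^*\otimes\widetilde{\sB}$ with $\widetilde{\sA},\widetilde{\sB}\in\{\sO,\widetilde{\frQ},\widetilde{\sS}_L^*,\widetilde{\sQ}\}$ that appear in Theorem~\ref{thm:digGvan}\,(2), and the required vanishing follows immediately whenever the effective $j$ lies in $[1,9]$.

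For the cases that do not fit that pattern I would invoke the Serre-duality isomorphism of Proposition~\ref{cla:key}, namely $H^\bullet(\sC(-jM))\simeq H^{13-\bullet}(\sC^*((j-10)M))$, which swaps starred and unstarred factors while sending $j\mapsto 10-j$. This converts products such as $\widetilde{\sS}_L^*\otimes\widetilde{\frQ}^*$ and $\widetilde{\sS}_L^*\otimes\widetilde{\sQ}^*$ into the admissible forms $\widetilde{\sS}_L\otimes\widetilde{\frQ}$ and $\widetilde{\sS}_L\otimes\widetilde{\sQ}$ at the complementary twist, which now lies in $[1,9]$ and hence is handled by Theorem~\ref{thm:digGvan}\,(2). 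The boundary values $j=0$ and $j=10$, which occur only for pairs involving $\sE_{1_b}$ at the endpoints $k=1$ or $k=9$, land --- after at most one application of Serre duality --- on exactly the six cohomology groups $H^\bullet(\widetilde{\sQ}^*\otimes\widetilde{\sS}_L^*)$, $H^\bullet(\widetilde{\sQ}^*\otimes\widetilde{\frQ})$, $H^\bullet(\widetilde{\sQ}^*)$, $H^\bullet(\widetilde{\sS}_L\otimes\widetilde{\frQ})$, $H^\bullet(\widetilde{\sS}_L)$, $H^\bullet(\widetilde{\frQ}^*)$ that encode the internal semi-orthogonality of Theorem~\ref{thm:digGvan}\,(1); those same vanishings also account for the remaining pieces of the internal semi-orthogonality of $\sD_{\widetilde{\hcoY}}$ itself. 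The main obstacle is thus not a hard calculation but the combinatorial case analysis over all sixteen products and nine twists; a minor extra point to check is that Proposition~\ref{cla:key}, stated for products of sheaves in the specific list of Theorem~\ref{thm:digGvan}, extends verbatim to the slightly larger family appearing here, since its proof only uses restrictions to the exceptional divisor $F_{\widetilde{\hcoY}}$ that are controlled by Lemma~\ref{lem:FA-FB-Lem}.
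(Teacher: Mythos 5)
Your argument is correct and uses exactly the same ingredients as the paper's own proof: Theorem \ref{thm:digGvan} for every effective twist $j$ in the range $[1,9]$ (and, for $j=0$, the semi-orthogonality of part (1)), together with a single application of the Serre-duality statement of Proposition \ref{cla:key} to handle the wrap-around created by the twist in $\sE_{1_b}=\widetilde{\sQ}^{*}(M)$. The paper packages the identical bookkeeping more compactly by regrouping the expanded sequence into the blocks $\sD'=\langle\widetilde{\sQ}^{*},\widetilde{\sS}_{L},\widetilde{\frQ}^{*},\sO_{\widetilde{\hcoY}}\rangle,\sD'(1),\dots,\sD'(9)$ after moving $\widetilde{\sQ}^{*}(10M)$ to the front, but this is only a presentational difference from your explicit case analysis.
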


\begin{proof}
Writing 
$\sD_{\widetilde{\hcoY}},\sD_{\widetilde{\hcoY}}(1),\dots,\sD_{\widetilde{\hcoY}}(9)$ explicitly, we obtain
\begin{eqnarray*}
\widetilde{\sS}_{L},\,\widetilde{\frQ}^{*},\,\sO_{\widetilde{\hcoY}},\\
\widetilde{\sQ}^{*}(M),\, \widetilde{\sS}_{L}(M),\,\widetilde{\frQ}^{*}(M),\,\sO_{\widetilde{\hcoY}}(M),\\
\cdots,\\
\widetilde{\sQ}^{*}(9M),\, \widetilde{\sS}_{L}(9M),\,\widetilde{\frQ}^{*}(9M),\,\sO_{\widetilde{\hcoY}}(9M),\\
\widetilde{\sQ}^{*}(10M).
\end{eqnarray*}
Let $\sC$ be any sheaf in this sequence except
$\widetilde{\sQ}^{*}(10M)$.
Then, by Proposition \ref{cla:key},
we have 
$\Hom^{\bullet}(\widetilde{\sQ}^{*}(10M), \sC)\simeq
\Hom^{\bullet}(\sC, \widetilde{\sQ}^{*})$.
Hence we may replace the sequence
by
$\sD', \sD'(1),\dots, \sD'(9)$
with
$\sD'=\langle 
\widetilde{\sQ}^{*},\, \widetilde{\sS}_{L},\,\widetilde{\frQ}^{*},\,\sO_{\widetilde{\hcoY}}\rangle$.
Now the assertion follows immediately from 
Theorem \ref{thm:digGvan}.
\end{proof}

\begin{rem}
By similar calculations to show Theorem \ref{thm:digGvan},
we obtain the following (see \cite{Arxiv}):
\begin{enumerate}[(1)]
\item
$(\sE_{\alpha})_{\alpha\in\Lambda}$ is a strongly exceptional collection.

\item 
$\Hom$'s of the sheaves in the above collection are 
given by the following diagram$:$ 
\def\xyQuiverII{ 
\begin{matrix} 
\begin{xy} 
(5,-12)*+{\circ}="c3", 
(20,-12)*+{\circ}="c2", 
(35,0)*+{\circ}="c1a", 
(35,-24)*+{\circ}="c1b", 
(0,-12)*{\mathcal{E}_3}, 
(20,-15)*{\mathcal{E}_2}, 
(38,3)*{\mathcal{E}_{1_a}}, 
(38,-27)*{\mathcal{E}_{1_b}}, 
(18,1)*{\wedge^2V}, (18,-25)*{{\ft S}^2V}, 
(30,-8)*{V}, (15,-9)*{V}, 
(30,-16)*{V}, 
\ar "c3";"c2" 
\ar "c2";"c1a" 
\ar "c2";"c1b"
\ar @/^1pc/@{->} "c3";"c1a" 
\ar @/_1pc/@{->} "c3";"c1b" 
\end{xy} 
\end{matrix} } \begin{equation}
\xyQuiverII\label{eq:quiver2}\end{equation}
 
\end{enumerate}
\end{rem}

\begin{rem}
 Since the singularity of the double symmetroid $\hcoY$ is complicated,
\cite[Theorem 1]{Lef} seems to be difficult to apply
for the resolution $\widetilde{\hcoY}\to\hcoY$ to obtain a categorical
resolution of $D^{b}(\hcoY)$. However, we expect
that the Lefschetz collection $\sD_{\widetilde{\hcoY}},$
$\sD_{\widetilde{\hcoY}}(1),\dots,\sD_{\widetilde{\hcoY}}(9)$
gives a Lefschetz
decomposition of a strongly crepant categorical resolution, if exists, of $\sD^{b}(\hcoY)$. 

\end{rem}

\appendix
\section{{\bf Pull-backs of sheaves to the flattening of $F_{\widetilde{\hcoY}}\to G_{\hcoY}$}}
\label{sec:Appendix-D}
In this section, we consider the situation as in \cite[Subsect.~5.5]{Geom} and use the notation there freely.

Here we fix $V_{n-1}$ and $V_{n}$, and consider sheaves on the fiber
$Fib^{(3)}(V_{n-1},V_{n},V_{n})=A\cup B$ of $F^{(3)}\to\hat{G}'$. 

\begin{lem}
\label{lem:FA-FB-Lem}Denote by $H_{A}$ the pull-back on $A$ of
$\sO_{\mP(V_{n-1}^*)}(1)$. We denote by $\sE_{A}$ and $\sE_{B}$ the
pull-backs of a locally free sheaf $\sE$ on $\widetilde{\hcoY}$
to $A$ and $B$, respectively. In particular, $F_{A}$ and $F_{B}$
stand for the pull-backs of $F_{\widetilde{\hcoY}}$
to $A$ and $B$. Then we have the following isomorphisms\,$:$

\begin{myitem2} \item[$(1)$] $F_{A}\sim -(E_{AB}+2H_{A})$, $(\widetilde{\sS}_{L}^{*})_{A}\simeq\widetilde{\sQ}_{A}\simeq\sO_{A}\oplus\sV$,
and $\widetilde{\frQ}_{A}\simeq\sO_{A}^{\oplus2}\oplus\sV$ with a locally
free sheaf $\sV$ given by a unique non-split extension \[
0\to\sO_{A}(H_{A}+E_{AB})\to\sV\to\sO_{A}(H_{A})\to0.\]

\item[$(2)$] $F_{B}\simeq p_{B}^{*}\sO_{\rG(n-3,V_{n-1})}(-1)$, $(\widetilde{\sS}_{L}^{*})_{B}\simeq\widetilde{\sQ}_{B}\simeq\sO_{B}\oplus p_{B}^{*}\frQ_{V_{n-1}}$,
and $\widetilde{\frQ}_{B}\simeq\sO_{B}^{\oplus2}\oplus p_{B}^{*}\frQ_{V_{n-1}}$,
where $\frQ_{V_{n-1}}$ is the universal quotient bundle on
$\rG(n-3,V_{n-1})$, and 
$p_{B}\colon B\to\rG(n-3,V_{n-1})$ is given in \cite[Prop.~5.10 and 5.11]{Geom}.

\end{myitem2}\end{lem}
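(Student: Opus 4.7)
The plan is to handle the two components $A$ and $B$ of the fiber $Fib^{(3)}(V_{n-1},V_n,V_n)$ separately, using the explicit descriptions from \cite[Subsect.~5.5]{Geom} and the characterizations of $\widetilde{\sS}_{L}$, $\widetilde{\sQ}$, $\widetilde{\frQ}$ in Propositions \ref{def:SQT1} and \ref{def:tidelT}. Recall that $\widetilde{\sS}_{L}$ is the pull-back of the universal subbundle $\bar{\sS}$ on $\rG(3,\wedge^{n-1}V)$, while $\widetilde{\sQ}$ and $\widetilde{\frQ}$ are descended from $\Lrho_{\hcoY_{2}}^{\;*}\sQ$ and $\frQ_{2}$ along $\tLrho_{\hcoY_{2}}$. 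In both components the strategy is the same: produce a canonical trivial direct summand forced by the specific fiber data, and identify the complement via the tautological structure of the component.

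First I would address part (2), which is the easier of the two because $B$ fibers over $\rG(n-3,V_{n-1})$ via the morphism $p_{B}$ of \cite[Prop.~5.10, 5.11]{Geom}. Pulling back the universal sequence (\ref{eq:univ}) to $B$ and using the inclusion $V_{n-1}\subset V_{n}$, the bundle $\wedge^{2}\frQ$ acquires a canonical trivial sub-summand from the distinguished direction in $V_{n}/V_{n-1}$. This trivial piece propagates to each of $\widetilde{\sS}_{L}^{*}$, $\widetilde{\sQ}$, $\widetilde{\frQ}$, and the complement pulls back precisely from $\frQ_{V_{n-1}}$ on $\rG(n-3,V_{n-1})$. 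The additional trivial summand in $\widetilde{\frQ}_{B}$ comes from the exact sequence (\ref{eq:Eb}), whose error term contributes one extra trivial direction upon restriction to $B$. The class $F_{B}\simeq p_{B}^{*}\sO_{\rG(n-3,V_{n-1})}(-1)$ is then obtained by identifying $F_{\widetilde{\hcoY}}\cap B$ inside $B$ as the hyperplane locus relative to $p_{B}$, which is built into the construction of the flattening.

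For part (1), $A$ is a $\mP^{1}$-bundle over $\rG(2,V_{n-1})$ obtained from $\mP(\sO\oplus \sU(1))$, and the $\sO$-summand gives a canonical section that produces the trivial sub-summand in $(\widetilde{\sS}_{L}^{*})_{A}$, $\widetilde{\sQ}_{A}$, and $\widetilde{\frQ}_{A}$. The complementary rank-two piece is $\sV$, and I would identify its outer terms by restricting to a fiber of $A\to \rG(2,V_{n-1})$ (to pin the twist numerically) and then invoking $\SL$-equivariance to upgrade to the global form. The divisor $E_{AB}$ enters the extension of $\sV$ because it is precisely the locus where the two natural sections of $\sO_{A}(H_{A})$ coming from the two summands of $\sO\oplus \sU(1)$ become linearly dependent. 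For the formula $F_{A}\sim -(E_{AB}+2H_{A})$, I would pull back the identity $F_{\widetilde{\hcoY}}'\sim 2M_{\hcoY_{2}}-L_{\hcoY_{2}}-F_{\rho}$ of Proposition \ref{prop:F'} along the composition $A\hookrightarrow F^{(3)}\to F_{\widetilde{\hcoY}}$, noting that $M_{\widetilde{\hcoY}}$ pulls back trivially to the fiber $A\cup B$ (observation used immediately before the statement of the lemma), and track the $L_{\hcoY_{2}}$ and $F_{\rho}$ contributions through the flattening to recover the coefficient $2$ on $H_{A}$.

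The main obstacle is the non-splitness of $\sV$ in part (1). A splitting would force the fiber-wise sequence to split globally, which is incompatible with the non-triviality of $\sU(1)$ as a line bundle on $\rG(2,V_{n-1})=\mP(V_{n-1}^{*})$; equivalently, one checks $\Ext^{1}(\sO_{A}(H_{A}),\sO_{A}(H_{A}+E_{AB}))\neq 0$ via the projection formula for $A\to \rG(2,V_{n-1})$. Once $\sV$ is correctly identified, the isomorphism $(\widetilde{\sS}_{L}^{*})_{A}\simeq \widetilde{\sQ}_{A}$ on $A$ reduces to Proposition \ref{cla:mislead} applied along the map from $A$ into $\Prt_{\rho}$ implicit in the flattening, and the additional trivial summand in $\widetilde{\frQ}_{A}$ again comes from the restriction of (\ref{eq:Eb}) to $A$.
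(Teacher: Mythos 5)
Your overall architecture (trivial summands plus a rank--two extension $\sV$ on $A$; everything on $B$ pulled back from $\rG(n-3,V_{n-1})$; $F_A$ via Proposition \ref{prop:F'}) matches the paper's, but two steps you rely on would not go through as written. First, the non-splitness of $0\to\sO_{A}(H_{A}+E_{AB})\to\sV\to\sO_{A}(H_{A})\to0$: showing $\Ext^{1}(\sO_{A}(H_{A}),\sO_{A}(H_{A}+E_{AB}))\neq 0$ only proves that \emph{some} non-split extension exists, not that the extension you actually have in hand is non-split. The paper's argument is different and essential: the middle term descends to $A_{\widetilde{\hcoY}}$ whereas $\sO_A(H_A)$ does not, so the sequence cannot split; the computation $\Ext^{1}(\sO_{A}(H_{A}),\sO_{A}(H_{A}+E_{AB}))\simeq H^{1}(\mP(V_{n-1}^{*}),\sO\oplus\Omega^{1})\simeq\mC$ is then used only for the \emph{uniqueness} of the non-split extension, which is what lets one name the bundle $\sV$ once and reuse it for $\widetilde{\sS}_L^*$, $\widetilde{\sQ}$ and $\widetilde{\frQ}$. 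Second, your derivation of $F_{A}\sim-(E_{AB}+2H_{A})$ is missing its key ingredient. Pulling back $F'_{\widetilde{\hcoY}}=2M_{\hcoY_{2}}-L_{\hcoY_{2}}-F_{\rho}$ and killing $M$ gives $F_{\widehat{A}}=-(L_{\widehat{A}}+G)$, where $G$ is the pull-back of $F_\rho$; to convert this into $-(E_{AB}+2H_A)$ one needs the divisor relation $\widehat{E}_{AB}+2H_{\widehat{A}}-G=L_{\widehat{A}}$. This is the content of the paper's Step 1, proved by analyzing the linear system $|E_{AB}+2H_A|$ twisted by the ideal of the section $s_A$ of $\rho$-conics (its pushforward to $\mP(V_{n-1}^*)$ is $\Omega^1_{\mP(V_{n-1}^*)}(2)$, giving a base-point-free system whose induced morphism is the quadric fibration). "Tracking the contributions through the flattening" without this relation does not produce the coefficient $2$ on $H_A$; the same relation is also what pins down $\det\sV=E_{AB}+2H_A$ and hence the twists of the two line bundles in the extension.

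Two smaller points. The identification of the sub- and quotient line bundles of $\sV$ cannot be done just by restricting to a fiber of $A\to\mP(V_{n-1}^{*})$ and invoking equivariance: one is working on a single fiber $Fib^{(3)}(V_{n-1},V_{n},V_{n})$, where there is no $\SL(V)$-action left, and a fiberwise twist does not determine a line bundle on $A$. The paper instead carries out an explicit linear-algebra analysis of the quadric cone $\Gamma$ (the cone over $v_2(\mP(V_{n-1}/V_{n-3}))$), exhibiting concrete sub-line-bundles with fibers $\wedge^2(V_{n-1}/V_{n-3})$ and $(V_{n-2}/V_{n-3})\otimes(V_n/V_{n-1})$, and then closes the gap with the determinant identity from Step 2. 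Also, $(\widetilde{\sS}_{L}^{*})_{A}\simeq\widetilde{\sQ}_{A}$ does not follow from Proposition \ref{cla:mislead}, which concerns restriction to $\Prt_{\rho}$ and $\Prt_{\sigma}$ only; $A$ is not contained in $\Prt_\rho$, and the paper instead proves the two decompositions in parallel, using $\det\sQ=\det\{\sS^*(L)\}$ from (\ref{eq:detdiff}). (Minor: $A$ is a $\mP^{2}$-bundle over $\mP(V_{n-1}^{*})$, not a $\mP^{1}$-bundle, since $\sO\oplus\sU(1)$ has rank three.)
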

\begin{proof}
Let $G$ be the exceptional divisor for $\widehat{A}\to A$
and $L_{\widehat{A}}$, $H_{\widehat{A}}$, and $\widehat{E}_{AB}$ the pull-backs
on $\widehat{A}$ of $\sO_{\rG(n-3,V_{n-1})}(1)$, $H_{A}$ and $E_{AB}$, respectively.

\noindent\textbf{Step 1. $\widehat{E}_{AB}+2H_{\widehat{A}}-G=L_{\widehat{A}}$.}

As in \cite[Prop.~5.13]{Geom}, we denote by $s_A\subset A$ the locus of $\rho$-conics, which is a section associated to an injection 
to an injection 
$\sO_{\mP(V_{n-1}^{*})}\to \sO_{\mP(V_{n-1}^{*})}\oplus T_{\mP(V_{n-1}^*)}$. 
Let $\sI$ be the ideal sheaf of $s_A$ in $A$, and
consider the exact sequence
\[
0\to \sO_A(E_{AB}+2H_A)\otimes \sI\to \sO_A(E_{AB}+2H_A)\to \sO_{s_A}(2H_A)\to 0,
\]
where the last term is obtained since $s_A\cap E_{AB}=\emptyset$.
Let $\pi_A\colon A\to \mP(V_{n-1}^*)$ be the natural morphism.
Then ${\pi_A}_*(\sO_A(E_{AB}+2H_A)\otimes \sI)\simeq 
\Omega^1_{\mP(V_{n-1}^*)}(2)\simeq \wedge^{n-3}T_{\mP(V_{n-1}^*)}$.
From this, we see that the natural map 
\[
H^0(\sO_A(E_{AB}+2H_A)\otimes \sI)\otimes \sO_A \to \sO_A(E_{AB}+2H_A)\otimes \sI
\]
is surjective and 
$H^0(\sO_A(E_{AB}+2H_A)\otimes \sI)\simeq H^0(\wedge^{n-3}T_{\mP(V_{n-1}^*)})
\simeq \wedge^{n-3} V_{n-1}^*$.
This is equivalent to that 
$|\widehat{E}_{AB}+2H_{\widehat{A}}-G|$ is base point free
and it defines a morphism $\Phi\colon \widehat{A}\to \mP(\wedge^{n-3} V_{n-1})$. 
$\Phi$ factors through $A_{\hcoY_2}$ since it contracts $E_{AB}$.
Let $\Phi'\colon A_{\hcoY_2}\to \mP(\wedge^{n-3} V_{n-1})$ be the induced morphism.
$\Phi'$ does not coincide with $A_{\hcoY_2}\to A_{\widetilde{\hcoY}}$
since the latter contracts the image of $G$.
Therefore $\Phi'$ induces the quadric fibration $A_{\hcoY_2}\to \rG(n-3,V_{n-1})$. In particular, we have
$\widehat{E}_{AB}+2H_{\widehat{A}}-G=L_{\widehat{A}}$ as desired.

\noindent\textbf{Step 2. $\det(\widetilde{\sS}_{L}^{*})_{A}=\det\widetilde{\sQ}_{A}=E_{AB}+2H_{A}$.}

 By (\ref{eq:detdiff}), we have only to determine $\det\widetilde{\sQ}_{A}$.
Note
that $G$ coincides with the pull-back of $F_{\rho}\vert_{\hcoY_{2}}$,
where $F_{\rho}$ is the exceptional divisor of $\hcoY_{2}\to\widetilde{\hcoY}$.
Therefore, by Proposition \ref{cla:M}, we have $\det\widetilde{\sQ}_{A}=G+L_{\widehat{A}}$
since $M_{\widetilde{\hcoY}}$ is trivial on a fiber of $F_{\widetilde{\hcoY}}\to G_{\hcoY}$.
Now the assertion follows from Step 1.

\noindent\textbf{Step 3. $F_{A}=-(E_{AB}+2H_{A})$.}

By Proposition \ref{prop:F'}, we have
$F'_{\widetilde{\hcoY}}=2M_{\hcoY_2}-L_{\hcoY_2}-F_{\rho}$.
Since $G$ coincides with the pull-back of $F_{\rho}\vert_{\hcoY_{2}}$
and $M_{\widetilde{\hcoY}}$ is trivial on a fiber of $F_{\widetilde{\hcoY}}\to G_{\hcoY}$,
it holds that 
$F_{\widehat{A}}=-(L_{\widehat{A}}+G)$. 
Therefore the assertion follows
from Step 1. 

\noindent\textbf{Step 4. $(\widetilde{\sS}_{L}^{*})_{A}\simeq\widetilde{\sQ}_{A}\simeq\sO_{A}\oplus\sV$.}

 We investigate the restriction of the universal exact sequence
(\ref{eq:univ}) on $A_{\hcoY_{3}}$. Let $\sS_{A_{\hcoY_{3}}}$ and
$\sQ_{A_{\hcoY_{3}}}$ be the restrictions of $\sS$ and $\sQ$, respectively.
Then we obtain \begin{equation}
0\to\sS_{A_{\hcoY_{3}}}\to\Lpi_{A_{\hcoY_{3}}}^{\;*}(\wedge^2 \frQ|_{\rG(n-3,V_{n-1})})\to\sQ_{A_{\hcoY_{3}}}\to0.\label{eq:resuniv}\end{equation}
$\frQ|_{\rG(n-3,V_{n-1})}$ splits as 
\[
\frQ|_{\rG(n-3,V_{n-1})}\simeq \frQ_{V_{n-1}}\oplus V/V_{n-1}\otimes\sO_{{\rG(n-3,V_{n-1})}}.
\]
Therefore we have the following isomorphisms: 
\begin{eqnarray}
\wedge^{2}(\frQ|_{\rG(n-3,V_{n-1})})\simeq\label{eq:resT-1}\qquad\qquad\qquad\qquad\qquad\qquad\qquad\qquad\qquad\qquad\\
\sO_{{\rG(n-3,V_{n-1})}}(1)\oplus\Big(\frQ_{V_{n-1}} \otimes (V/V_{n-1}) \Big)
\oplus\wedge^{2}(V/V_{n-1})\otimes\sO_{{\rG(n-3,V_{n-1})}}.\nonumber \end{eqnarray}
Let $[V_{n-3}]$ be a point of $\rG(n-3,V_{n-1})$.
(\ref{eq:resT-1}) means fiberwise 
\begin{equation}
\label{eq:fiber}
\wedge^{2}(V/V_{n-3})\simeq
\wedge^2 (V_{n-1}/V_{n-3})\oplus (V_{n-1}/V_{n-3})\otimes (V/V_{n-1})
\oplus
\wedge^{2}(V/V_{n-1}).\end{equation}
Now we recall \cite[Rem.~5.14]{Geom}.
Let $\Gamma$ be the fiber of $A_{\hcoY_3}\to \rG(n-3,V_{n-1})$ over $[V_{n-3}]$.
The vertex of the quadric cone $\Gamma$ corresponds to
the $\sigma$-plane ${\rm P}_{V_n/V_{n-3}}=\{\mC^2\subset V_n/V_{n-3}\}$.
Points $[{\rm P}_{V_{n-2}/V_{n-3}}]$ which correspond to $\rho$-planes and are contained in $\Gamma$ satisfy $V_{n-3}\subset V_{n-2}$.
Since $\Gamma$ is the cone over the Veronese curve $v_2(\mP(V_{n-1}/V_{n-3}))$,
it is swept out by lines joining $[{\rm P}_{V_n/V_{n-3}}]$ and $[{\rm P}_{V_{n-2}/V_{n-3}}]$ such that
$V_{n-3}\subset V_{n-2}\subset V_{n-1}$.
A line in
$\rG(3,\wedge^{2}(V/V_{n-3}))$ is of the form
$\{W_2\subset \mC^3\subset W_4\}$ with some $W_i\simeq \mC^i$ $(i=2,4)$.
We take a basis ${\bm e}_1,\dots, {\bm e}_4$ of $V/V_{n-3}$
such that $V_{n-2}/V_{n-3}=\langle {\bm e}_1\rangle$,
$V_{n-1}/V_{n-3}=\langle {\bm e}_1, {\bm e}_2\rangle$, and 
$V_{n}/V_{n-3}=\langle {\bm e}_1, {\bm e}_2, {\bm e}_3\rangle$.
For the line joining 
$[{\rm P}_{V_n/V_{n-3}}]$ and $[{\rm P}_{V_{n-2}/V_{n-3}}]$,
it is easy to see that \[
W_2=\langle {\bm e}_1\wedge {\bm e}_2, {\bm e}_1\wedge {\bm e}_3\rangle\simeq
 \wedge^2 (V_{n-1}/V_{n-3})\oplus (V_{n-2}/V_{n-3})\otimes (V_n/V_{n-1}),
\]
\[
W_4=\langle {\bm e}_1\wedge {\bm e}_2, {\bm e}_1\wedge {\bm e}_3,
{\bm e}_1\wedge {\bm e}_4, {\bm e}_2\wedge {\bm e}_3\rangle
\subset 
\wedge^2 (V_{n-1}/V_{n-3})\oplus (V_{n-1}/V_{n-3})\otimes (V/V_{n-1}),
\]
and
\[
(\wedge^2 (V_{n-1}/V_{n-3})\oplus (V_{n-1}/V_{n-3})\otimes (V/V_{n-1}))/W_4\simeq \langle {\bm e}_2\wedge {\bm e}_4\rangle \simeq V_{n-1}/V_{n-2}\otimes V/V_n. 
\]

As for $\sS_{\widehat{A}}$, these imply the following:
\begin{itemize}
\item
We can see that $\sS_{\widehat{A}}$ contains
the line bundle $L_{\widehat{A}}$
with fiber $\wedge^2 (V_{n-1}/V_{n-3})$
as a direct summand.
Hence, let us write $\sS_{\widehat{A}}
=\sS'_{\widehat{A}}\oplus L_{\widehat{A}}$
with a locally free sheaf $\sS'_{\widehat{A}}$ of rank two on $\widehat{A}$.
\item
$\sS'_{\widehat{A}}$ contains a sub line bundle
with fiber 
$(V_{n-2}/V_{n-3})\otimes (V_n/V_{n-1})$, which is isomorphic to
$-H_{\widehat{A}}+L_{\widehat{A}}$.
\end{itemize}
Therefore, by Step 2, we obtain
\[
0\to\sO_{\widehat{A}}(-H_{\widehat{A}})\to
{\sS}'_{\widehat{A}}(-L_{\widehat{A}})\to\sO_{\widehat{A}}(-H_{\widehat{A}}-\widehat{E}_{AB})\to0.\]
Since all the terms of the exact sequence
are the pull-backs of locally free sheaves on $A$,
the dual of this exact sequence descends to
\[
0\to\sO_{{A}}(H_{{A}}+{E}_{AB})\to(\sS'_L)^*_{{A}}\to
\sO_{{A}}(H_{{A}})\to0,\]
where $(\sS'_L)^*_{{A}}$ is the locally free sheaf on $A$ such that its bull-back on $\widehat{A}$ is equal to $({\sS}'_{\widehat{A}})^*(L_{\widehat{A}})$.
This sequence
does not split since $({\sS}'_{L})^*_{{A}}$ comes from a
locally free sheaf on $A_{\widetilde{\hcoY}}$ while $H_{A}$ does
not. From
\begin{align*}
 & \Ext^{1}(\sO_{A}(H_{{A}}),\sO_{A}(H_{{A}}+E_{AB}))\simeq\\
 & H^{1}(A,\sO_{A}(E_{AB}))\simeq H^{1}(\mP(V_{n-1}^{*}),\sO_{\mP(V_{n-1}^{*})}\oplus\Omega_{\mP(V_{n-1}^{*})}^{1})\simeq\mC,\end{align*}
such a nonsplit extension is unique, which we denote by $\sV$.
Thus we obtain
$(\widetilde{\sS}_{L}^{*})_{A}\simeq\sV\oplus\sO_{A}$, with a locally
free sheaf $\sV$ as described in (1).

Similarly, as for $\sQ_{\widehat{A}}$, the above facts imply the following:
\begin{itemize}
\item
We can see that $\sQ_{\widehat{A}}$ contains
the line bundle $\sO_{\widehat{A}}$ with fiber $\wedge^{2}(V/V_{n-1})$
as a direct summand.
Hence, let us write $\sQ_{\widehat{A}}
=\sQ'_{\widehat{A}}\oplus \sO_{\widehat{A}}$
with a locally free sheaf $\sQ'_{\widehat{A}}$ of rank two on $\widehat{A}$.
\item
$\sQ'_{\widehat{A}}$ have a quotient line bundle
with fiber 
$V_{n-1}/V_{n-2}\otimes V/V_n$, which is isomorphic to
$H_{\widehat{A}}$.
\end{itemize}
Therefore, by Step 2, we obtain
\[
0\to\sO(H_{\widehat{A}}+\widehat{E}_{AB})\to{\sQ}'_{\widehat{A}}\to\sO(H_{\widehat{A}})\to0.\]
In a similar way to determine $(\widetilde{\sS}_{L}^{*})_{A}$,
we may obtain  
$\sQ_{A}\simeq\sV\oplus\sO_{A}$ as desired.

\noindent\textbf{Step 5. $\widetilde{\frQ}_{A}\simeq\sO_{A}^{\oplus2}\oplus\sV$.}

By \cite[Rem.~5.14]{Geom},
$\Prt_{\rho}\cap A_{\hcoY_{3}}\simeq\mP(\frQ_{V_{n-1}})\simeq \rF(n-3,n-2,V_{n-1}).$
Restricting (\ref{eq:exact-seq-T2})
to ${A_{{\hcoY}_{2}}}$, 
we obtain 
\begin{equation}
0\to\frR_{A_{\hcoY_{2}}}\to\Lpi_{{A_{\hcoY_{2}}}}^{\;*}\frQ_{V_{n-1}}^*\oplus\sO_{A_{\hcoY_2}}^{\oplus2}\to\iota_{*}\sO_{\mP(\frQ\,_{V_{n-1}})}(1)\to0,\label{eq:E_a'res}
\end{equation}
 where we set $\frR_{A_{\hcoY_{2}}}=\frR_{2}\vert_{A_{\hcoY_{2}}},
\Lpi_{A_{\hcoY_{2}}}=\Lpi_{\hcoY_{2}}\vert_{A_{\hcoY_{2}}}$,
$\iota:\mP(\frQ_{V_{n-1}})\hookrightarrow A_{\hcoY_{2}}$
and use $\sR_{2}/\sR_{1}\simeq\sO_{\mP(\frQ\ )}(-1)$. Since $\Hom(\sO_{A_{\hcoY_2}},\sO_{\mP(\frQ\,_{V_{n-1}})}(1))=H^{0}(\frQ_{V_{n-1}}^*)$
$=0$, we have the decomposition $\frR_{A_{\hcoY_{2}}}\simeq\sO_{A_{\hcoY_{2}}}^{\oplus2}\oplus\sV'$
and also 
\begin{equation}
0\to\sV'\to\Lpi_{{A_{\hcoY_{2}}}}^{\;*}\frQ_{V_{n-1}}^*\to\iota_{*}\sO_{\mP(\frQ\,_{V_{n-1}})}(1)\to0.\label{eq:V'}
\end{equation}

Note that
$\Lpi_{{\widehat{A}}}$ is a $\mF_2$-fibration and
it decomposes as $\widehat{A}\to \mP(\frQ_{V_{n-1}})\to \rG(n-3,V_{n-1})$.
On $\mP(\frQ_{V_{n-1}})\simeq \rF(n-3, n-2,V_{n-1})$,
we have a natural exact sequence
\[
0\to \sO_{\mP(\frQ\,_{V_{n-1}})}(-H_{\widehat{A}})\to \Lpi^*\frQ_{V_{n-1}}^*\to 
\sO_{\mP(\frQ\,_{V_{n-1}})}(1)\to0.
\]
Pulling back on $\widehat{A}$,
we obtain
$0\to \sO_{\widehat{A}}(-H_{\widehat{A}})\to \Lpi_{\widehat{A}}^*\frQ_{V_{n-1}}^*\to
\sO_{\widehat{A}}(H_{\widehat{A}}-L_{\widehat{A}})\to 0$
such that the composite of 
$\sO_{\widehat{A}}(-H_{\widehat{A}})\to \Lpi_{\widehat{A}}^*\frQ_{V_{n-1}}^*$ with 
$\Lpi_{{A_{\hcoY_{2}}}}^{\;*}\frQ_{V_{n-1}}^*\to\iota_{*}\sO_{\mP(\frQ\,_{V_{n-1}})}(1)$ is a $0$-map.
Therefore it induces an injection 
$\sO_{\widehat{A}}(-H_{\widehat{A}})\to \sV'$ with a locally free cokernel.
Computing the determinants, we obtain
\[
0\to \sO_{\widehat{A}}(-H_{\widehat{A}})\to \sV'\to\sO_{\widehat{A}}(H_{\widehat{A}}-L_{\widehat{A}}-G)\to 0. 
\]
Since $H_{\widehat{A}}-L_{\widehat{A}}-G=-H_{\widehat{A}}-\widehat{E}_{AB}$ by Step 1, we obtain 
$\widetilde{\frQ}_{A}\simeq\sO_{A}^{\oplus2}\oplus\sV$
in a similar way to determine $(\widetilde{\sS}_{L}^{*})_{A}$.

\noindent\textbf{Step 6. $F_{B}$, $(\widetilde{\sS}_{L}^{*})_{B}$, $\widetilde{\sQ}_{B}$
and $\widetilde{\frQ}_{B}$.}

 By \cite[Prop.~5.10 and 5.11]{Geom}, the image of $B$ on $F_{\widetilde{\hcoY}}$
is the $\rG(n-3, V_{n-1})$ in $A_{\widetilde{\hcoY}}$. Therefore, $F_{B}$,
$(\widetilde{\sS}_{L}^{*})_{B}$, $\widetilde{\sQ}_{B}$ and $\widetilde{\frQ}_{B}$,
respectively, are the pull-backs of the restrictions of $F_{\widetilde{\hcoY}}$,
$\widetilde{\sS}_{L}^{*}$ $\widetilde{\sQ}$, and $\widetilde{\frQ}$
to $\rG(n-3, V_{n-1})$. Since $F_{A}|_{E_{AB}}\simeq-(E_{AB}+2H_{A})|_{E_{AB}}$
by Step 3, and this is the pull-back of $\sO_{\rG(n-3, V_{n-1})}(-1)$, 
we have $F_{B}=p_{B}^{*}\sO_{\rG(n-3, V_{n-1})}(-1)$.
Also, since $\widetilde{\frQ}_{A}\simeq\sO_{A}\oplus(\widetilde{\sS}_{L}^{*})_{A}\simeq\sO_{A}\oplus\widetilde{\sQ}_{A}$
as above, we have $\widetilde{\frQ}_{B}\simeq\sO_{B}\oplus(\widetilde{\sS}_{L}^{*})_{B}\simeq\sO_{B}\oplus\widetilde{\sQ}_{B}$.
Thus we have only to determine $\widetilde{\frQ}_{B}$. 
Since $\rG(n-3, V_{n-1})$ is contained in the locus of $\sigma$-plane,
it is disjoint from the locus $G_{\rho}$ of $\rho$-conics.
Therefore, by (\ref{eq:Eb}),
we have $\widetilde{\frQ}_{B}\simeq p_{B}^{*}(\frQ|_{\rG(n-3, V_{n-1})})\simeq\sO_{B}^{\oplus2}\oplus p_{B}^{*}(\frQ_{V_{n-1}})$.
\end{proof}

$\;$

\vspace{1cm}
\noindent {\footnotesize Department of Mathematics, Gakushuin University, 
Toshima-ku,Tokyo 171-8588,$\,$Japan }{\footnotesize \par}

\noindent {\footnotesize e-mail address: hosono@math.gakushuin.ac.jp} 

\vspace{5pt}

\noindent {\footnotesize Graduate School of Mathematical Sciences,
University of Tokyo, Meguro-ku,Tokyo 153-8914,$\,$Japan }{\footnotesize \par}

\noindent {\footnotesize e-mail address: takagi@ms.u-tokyo.ac.jp} 
\end{document}